\documentclass{article}

\usepackage[utf8]{inputenc}
\usepackage[english]{babel}
\usepackage[a4paper,top=2cm,bottom=2cm,left=3cm,right=3cm,marginparwidth=1.75cm]{geometry}

\usepackage{enumitem}
\usepackage{array, booktabs, multirow}

\usepackage{amsmath, amssymb, mathtools}
\usepackage{amsthm}

\usepackage{algorithm, algpseudocode}

\usepackage{graphicx}
\usepackage{tikz}
\usepackage[colorlinks=true, allcolors=blue]{hyperref}
\usepackage[capitalise,noabbrev]{cleveref}

\usepackage{orcidlink}
\usepackage{authblk}

\theoremstyle{plain}
\newtheorem{theorem}{Theorem}
\newtheorem{proposition}[theorem]{Proposition}
\newtheorem{corollary}[theorem]{Corollary}
\newtheorem{lemma}[theorem]{Lemma}

\theoremstyle{definition}
\newtheorem{definition}[theorem]{Definition}
\newtheorem{example}[theorem]{Example}
\newtheorem{remark}[theorem]{Remark}

\DeclarePairedDelimiter\abs{\lvert}{\rvert}

\newcommand{\sgn}{\mathrm{sgn}}
\newcommand{\red}{\mathrm{red}}
\newcommand{\tsub}{\mathrm{TS}}
\newcommand{\sol}{\mathrm{Sol}}
\newcommand{\syl}{\mathrm{syl}}
\newcommand{\pl}{\mathrm{pl}}

\usepackage[small, disableredefinitions]{complexity}

\newclass{\PTIME}{P}
\newclass{\EDTOL}{EDT0L}

\newdimen\palinquad
\newdimen\palinaxis
\newcommand{\palinaux}[1]{\palinquad=\fontdimen6#1 2\relax \palinaxis=\fontdimen22#1 2\relax \mkern2mu\tikz[baseline=-\palinaxis]\draw[line width=0.05\palinquad] (0,0) circle (0.28\palinquad);\mkern2mu}
\newcommand{\palin}{\mathord{\mathchoice{\palinaux\textfont}{\palinaux\textfont}{\palinaux\scriptfont}{\palinaux\scriptscriptfont}}}

\title{Twisted Equations and Palindrome Length}

\author[1]{Volker Diekert\,\orcidlink{0000-0002-5994-3762}}
\author[2]{Anna~E.\ Frid}
\author[3]{Silas Natterer\,\orcidlink{0009-0004-6524-0854}}
\author[3]{Alexander Thumm\,\orcidlink{0009-0005-4240-2045}}
\affil[1]{University of Stuttgart, FMI, Stuttgart, Germany\protect\\ \texttt{diekert@fmi.uni-stuttgart.de}}
\affil[2]{Aix Marseille Université, CNRS, I2M, Marseille, France\protect\\ \texttt{anna.e.frid@gmail.com}}
\affil[3]{University of Siegen, ETI, Siegen, Germany\protect\\ \texttt{silas.natterer@uni-siegen.de}, \texttt{alexander.thumm@uni-siegen.de}}

\begin{document}
\maketitle

\begin{abstract}
Twisted equations in free groups are equations in which a finite group of symmetries acts on the free group by automorphisms and anti-automorphisms, such as inversion and reversal; moreover, the variables may be subject to rational constraints.
Algorithms from the theory of equations in virtually free groups solve such equations in \PSPACE, but they are intricate.
Our main result is an elementary algorithm for quadratic twisted equations, which extends the algorithm of Comerford and Edmunds (1981) for quadratic equations in free groups to the twisted setting and to rational constraints.
Besides deciding solvability in \PSPACE\ and yielding \EDTOL\ descriptions of solution sets, our algorithm computes, for every quadratic expression, finitely many parametric forms describing its values as freely reduced words.

As an application, we study the palindrome length of elements of free groups, that is, the least number of palindromes whose product they are.
Guided by the parametric forms that our algorithm computes for products of palindromes, we found, and then proved directly, a characterization of the freely reduced words of palindrome length at most $n$ in terms of patterns describing how palindromes are nested by twisted conjugations, thereby confirming a Catalan structure conjectured by the second author.
Consequently, these words form a context-free language, and palindrome length is computable in cubic time as well as in \LOGCFL, answering a question of Bardakov, Shpilrain, and Tolstykh (2005).

\medskip
\noindent\textbf{Keywords:} Free groups, twisted equations, quadratic equations, palindrome length
\end{abstract}

\section{Introduction}\label{sec:introduction}

Equations in free groups are among the most thoroughly studied algorithmic problems in combinatorial group theory: Makanin showed in his seminal work~\cite{Makanin82} that it is decidable whether a given equation has a solution, and Razborov~\cite{Razborov84} described the structure of their solution sets.
Later, satisfiability was shown to be decidable in \PSPACE, even in the presence of rational constraints on the values of the variables~\cite{Gutierrez00, DiekertGH05}.
For further results on equations in free groups, free monoids, and related structures, with or without constraints, see, e.g., \cite{RipsS95, Sela01, Plandowski04, KharlampovichM06, DiekertJP16, Jez16, CiobanuDE16, DiekertJKT26}.

In this article, we consider \emph{twisted equations} in free groups, in which a finite group of symmetries acts on the free group by automorphisms and anti-automorphisms, such as inversion and reversal, and in which the variables may be subject to rational constraints.
Twisted equations in which all twists are automorphisms arise naturally when solving equations in virtually free groups~\cite{DahmaniG10, DiekertE20}, whereas anti-automorphisms such as reversal appear naturally in combinatorics on words, for instance, when the values of variables are required to be palindromes.
Since anti-automorphisms can be handled via inversion, twisted equations can be solved in \PSPACE\ by results of the first author and Elder~\cite{DiekertE20} on equations in virtually free groups (\cref{pro:twisted-equations}).
While of great theoretical value, the known algorithms for equations in virtually free groups are intricate, and we are not aware of any implementation.
For \emph{quadratic} twisted equations (see \cref{def:quadratic}), we propose an elementary alternative.

Our main result (\cref{thm:quadratic,thm:parametric}) is an algorithm for quadratic twisted equations with rational constraints, that is, with the values of the variables restricted to rational subsets of the free group~$F(A)$.
Extending the algorithm of Comerford and Edmunds~\cite{ComerfordE81} for quadratic equations in free groups, it performs free reduction symbolically: basic substitutions, each replacing a variable by a word of length at most three, reveal step by step how the values of the variables cancel, while a simple weight function keeps the search space finite.

Besides deciding solvability and yielding \EDTOL\ descriptions of solution sets (\cref{cor:quadratic-equations}), our algorithm solves the associated unification problems parametrically (\cref{thm:parametric}): for every quadratic expression $w$, it computes finitely many expressions such that the values of~$w$ are exactly the words obtained, without any cancellation, by substituting admissible values into them.
Thus, questions about the values of~$w$ in the free group reduce to questions about words in the free monoid, in the spirit of Wicks' classical description of commutators~\cite{Wicks62}.
Besides commutators, this covers arbitrary quadratic twisted expressions, such as products of squares, twisted conjugates, and products of palindromes with respect to arbitrary antimorphic involutions.
In contrast to \EDTOL\ descriptions, such parametric descriptions are human-readable; moreover, our algorithm is simple enough to be implemented.

Our main application of parametric descriptions concerns palindrome length.
A word~$w$ is a \emph{palindrome} if it coincides with its reversal $w^\rho$, meaning that it reads the same from left to right and from right to left.
The palindrome length of a word, that is, the least number of palindromes into which it factors, has been studied extensively in combinatorics on words~\cite{FridPZ13, Saarela17}.
It can be computed in linear time~\cite{BorozdinKRS17} (see also~\cite{FiciGKK14, RubinchikS18, RubinchikS20}), and deciding whether it is at most a given bound is easily seen to be in \NL.
In free groups, palindrome length is defined in the same way, except that letters in a product of palindromes may cancel.
To make this precise, we take words over an alphabet~$A$ in which each letter~$a \in A$ has a formal inverse~$\bar a \in A$, and write $w \equiv w'$ if two words $w, w' \in A^\ast$ represent the same element of the free group~$F(A)$.
In this notation, the \emph{palindrome length} of a word $w \in A^\ast$, considered as an element of the free group $F(A)$, is given by
\[
  \pl(w) = \min \{\, n \geq 0 : w \equiv w_1 \cdots w_n \text{ for palindromes } w_1, \dotsc, w_n \,\}.
\]

Cancellations can make $\pl(w)$ arbitrarily smaller than the palindrome length of~$w$ as a word.
For instance, as observed by Saarela~\cite{Saarela17}, if $u = a_1 a_2 \cdots a_n$ and the letters $a_1, \dotsc, a_n, b_1, b_2$ are pairwise distinct, then the word $u b_1 b_2 u^\rho$ has no palindrome factor of length greater than one, so its palindrome length as a word is $2n + 2$, whereas the identity
\[
  u b_1 b_2 u^\rho \equiv (u b_1 u^\rho)\,(\bar u^\rho \bar u)\,(u b_2 u^\rho)
\]
yields $\pl(u b_1 b_2 u^\rho) \leq 3$.
Because of such cancellations, it is not even clear a priori whether $\pl$ is computable; this question was raised by Bardakov, Shpilrain, and Tolstykh~\cite[Problem~2]{BardakovST05},\footnote{See also \cite[Problem~16.9]{KhukhroM26}.} who had already shown that $\pl$ is unbounded on nonabelian free groups.
Recently, the second author~\cite{Frid25} characterized the freely reduced words $w$ with $\pl(w) \leq 2$ and those with $\pl(w) \leq 3$ by an extensive case analysis, and conjectured a Catalan structure for larger bounds.

In this article, we approach palindrome length through equations, as $\pl(w) \leq n$ holds if and only if $x_1 \cdots x_n = w$ has a solution in which the values of all variables $x_i$ are palindromes.
Owing to the palindrome constraint, this is a twisted equation, which moreover is quadratic.
In particular, \cref{pro:twisted-equations} immediately shows that $\pl$ is computable in \PSPACE\ (\cref{cor:palindrome-length-pspace}).
Computing the parametric descriptions for the expressions $x_1 \cdots x_n$ with an implementation of our algorithm, we arrived at a characterization of the words of bounded palindrome length in terms of \emph{patterns}, which describe how palindromes are nested by twisted conjugations.
We show that, for $n \geq 1$, a freely reduced word $w$ satisfies $\pl(w) \leq n$ if and only if it is obtained, without cancellation, from one of the patterns of rank~$n$ (\cref{thm:palindrome-length}).
This extends the characterizations for $n = 2, 3$ due to the second author~\cite{Frid25} and confirms the conjectured Catalan structure.
Although our proof of \cref{thm:palindrome-length} is direct, it closely follows the idea of symbolic reduction.

We also use our characterization to show that the words $w$ with $\pl(w) \leq n$, as well as the freely reduced words among them, form context-free languages, which can be described by a uniform family of grammars (\cref{pro:palindrome-length-language}).
From this, we conclude that the function $\pl$ is computable in $\LOGCFL \subseteq \PTIME$ (\cref{thm:palindrome-length-compute}) and, more concretely, in cubic time (\cref{cor:palindrome-length-cubic}).

\paragraph*{Outline.}
\Cref{sec:preliminaries} fixes notation, \cref{sec:twisted} introduces twisted equations, \cref{sec:quadratic} presents our algorithm for quadratic twisted equations, and \cref{sec:palindrome-length} applies our results to palindrome length.

\paragraph*{Concurrent Work.}
During the preparation of this article, several preprints~\cite{Jayadevan26, Liao26, KinyonU26} have proposed algorithms computing $\pl(w)$ in cubic time, based on a theorem of Dyer~\cite{Dyer01} on reflection length in Coxeter groups, and, in one case, the characterization of \cref{thm:palindrome-length} (see \cref{rem:coxeter}).
These works are specific to palindrome length and do not concern twisted equations in general.

Our results on palindrome length were obtained independently in August 2026, before these preprints appeared.
Our proof of \cref{thm:palindrome-length} is direct, and we additionally obtain context-free grammars generating the words $w$ with $\pl(w) \leq n$, and an algorithm computing~$\pl(w)$ in \LOGCFL.

\paragraph*{Disclosure of AI Use.}
Generative AI tools (large language models) were used for writing, that is, for drafting and revising expository passages, and for checking the manuscript for consistency.
All results and proofs are due to the authors, who take full responsibility for the content.

\section{Preliminaries}\label{sec:preliminaries}

In this section, we introduce notation and recall the basic notions needed in the remainder of the paper.
Since our results primarily concern twisted equations, in which the twists are given by the action of a finite group of symmetries, we work in an equivariant setting throughout.

Formally, we fix a finite group $G$ together with a homomorphism $\sgn \colon G \to \{\pm 1\}$.
The elements of $G$ are the \emph{symmetries} that we are interested in, and we call a symmetry $\alpha \in G$ \emph{positive} or \emph{negative} according to its \emph{signum} $\sgn(\alpha)$.
Consequently, the group of symmetries decomposes into the disjoint union $G = G_+ \cup G_-$, with $G_+ G_+, G_-G_- \subseteq G_+$ and $G_- G_+, G_+G_- \subseteq G_-$, where
\[
  G_+ = \{ \alpha \in G : \sgn(\alpha) = +1 \}
  \quad\text{and}\quad
  G_- = \{ \alpha \in G : \sgn(\alpha) = -1 \}.
\]
Throughout, we use overlines to denote inverses in groups; for instance, $\bar\alpha$ denotes the inverse of a symmetry $\alpha \in G$.
Note that, for every symmetry $\alpha \in G$, we have $\sgn(\bar \alpha) = \sgn(\alpha)$.

The signum dictates how a symmetry may act: positive symmetries act covariantly, that is, by automorphisms, whereas negative symmetries act contravariantly, that is, by anti-automorphisms.
We make this precise below for monoids equipped with a compatible $G$-action; the guiding examples are the free monoid $A^\ast$ and the free group $F(A)$ over a $G$-set~$A$.

\begin{example}\label{exm:palindrome-symmetries}
  In our main application, $G$ is generated by two distinct commuting involutions $\iota$ and~$\rho$, both of which are negative.
  Thus $G = \{1, \iota, \rho, \iota\rho\}$ with $G_+ = \{1, \iota\rho\}$ and $G_- = \{\iota, \rho\}$.

  On a free group $F(A)$, the involution $\iota$ acts by inversion and the involution $\rho$ acts by reversal of representing words over~$A$.
  Thus $\iota\rho = \rho\iota$ acts by taking inverses letterwise.
\end{example}

\subsection{\texorpdfstring{$G$-Sets and $G$-Monoids}{G-Sets and G-Monoids}}

All actions of the group $G$ of symmetries are right actions.
A \emph{$G$-set} is thus a set $X$ in which each $x \in X$ and $\alpha \in G$ determine an element $x^\alpha \in X$, subject to $x^{\alpha\beta} = (x^\alpha)^\beta$ and $x^1 = x$.

A map $f \colon X \to Y$ between $G$-sets is \emph{$G$-equivariant} if it is compatible with the two actions, meaning that $f(x^\alpha) = f(x)^\alpha$ for all $x \in X$ and $\alpha \in G$.
A subset $X \subseteq Y$ of a $G$-set $Y$ is called \emph{$G$-invariant} if $x^\alpha \in X$ for all $x \in X$ and $\alpha \in G$; such a subset is itself a $G$-set with respect to the restricted action, and its inclusion is a $G$-equivariant map.
Every subset $X \subseteq Y$ is contained in a unique minimal $G$-invariant subset, which is given by $X^G \coloneqq \{ x^\alpha : x \in X, \alpha \in G \}$.

For a singleton we write $x^G \coloneqq \{x\}^G$ and call this set the \emph{orbit} of $x$; we write $X/G$ for the set of orbits of~$X$.
As a $G$-set, $x^G$ is determined up to isomorphism by the \emph{stabilizer} of $x$, that is, by the subgroup $G_x \coloneqq \{ \alpha \in G : x^\alpha = x \} \subseteq G$ of those symmetries that fix $x$.
Indeed, $G_x \alpha \mapsto x^\alpha$ defines an isomorphism of $G$-sets from the set of right cosets $G_x \backslash G$, on which $G$ acts by right multiplication, onto $x^G$.
Moreover, stabilizers of elements in the same orbit are conjugate: we have $G_{x^\alpha} = G_x^\alpha$ for all $\alpha \in G$, where $H^\alpha \coloneqq \bar\alpha H \alpha$.

\medskip

Suppose that $M$ is a monoid and, at the same time, a $G$-set.
We call $M$ a \emph{$G$-monoid} if these two structures are compatible in the sense that, firstly, $1^\alpha = 1$ holds for all $\alpha \in G$, where $1 \in M$ is the neutral element, and, secondly, that
\[
  (uv)^\alpha = u^\alpha v^\alpha \text{ if } \alpha \in G_+
  \quad\text{and}\quad
  (uv)^\alpha = v^\alpha u^\alpha \text{ if } \alpha \in G_-
\]
for all $u,v \in M$ and $\alpha \in G$.
Thus, the map $u \mapsto u^\alpha$ is an automorphism of $M$ whenever $\alpha \in G_+$, and an anti-automorphism of $M$ whenever $\alpha \in G_-$.
A $G$-equivariant homomorphism $\varphi \colon M \to N$ between $G$-monoids is called a \emph{$G$-homomorphism}.
In this case we have $G_u \subseteq G_{\varphi(u)}$ for all $u \in M$.

\medskip

Elements of a $G$-monoid $M$ that are left unchanged by a negative symmetry play a special role throughout, and we give them a name.
We say that $u \in M$ is \emph{palindromic} if $u^\alpha = u$ for some $\alpha \in G_-$ or, equivalently, if $G_u \cap G_- \neq \emptyset$; otherwise, we say that $u$ is \emph{nonpalindromic}.

\subsection{Free Monoids and Groups}

For every $G$-set $A$, the \emph{free monoid} $A^\ast$, comprising all words over the alphabet $A$, carries a natural $G$-monoid structure: for a word $w = a_1 \cdots a_n \in A^\ast$ with $a_1, \dotsc, a_n \in A$ and $\alpha \in G$, we set
\[
  w^\alpha = a_1^\alpha \cdots a_n^\alpha \text{ if } \alpha \in G_+
  \quad\text{ and }\quad
  w^\alpha = a_n^\alpha \cdots a_1^\alpha \text{ if } \alpha \in G_-.
\]
With this structure, the free monoid $A^\ast$ has the expected universal property: every $G$-equivariant map $\varphi \colon A \to M$ to a $G$-monoid $M$ extends uniquely to a $G$-homomorphism $\hat\varphi \colon A^\ast \to M$.

As usual, $\abs{w}$ denotes the length of a word $w \in A^\ast$ and, for a letter $a \in A$, $\abs{w}_a$ denotes the number of occurrences of $a$ in $w$; thus, $\abs{w} = \sum_{a \in A} \abs{w}_a$.
Clearly, $\abs{w^\alpha} = \abs{w}$ for all $\alpha \in G$, implying that the $G$-action is compatible with factorizations:
if $w = uv$ satisfies $w^\alpha = w$ for some $\alpha \in G_+$, then $u^\alpha = u$ and $v^\alpha = v$; hence, $G_w \cap G_+ = G_u \cap G_v \cap G_+$.
Similarly, if $w = uqv$ with $\abs{u} = \abs{v}$ satisfies $w^\alpha = w$ for some $\alpha \in G_-$, then $u^\alpha = v$, $v^\alpha = u$, and $q^\alpha = q$.
We use these simple observations throughout without further reference.

\medskip

For the remainder of this section, let $A$ be a $G$-set, and let $\iota \in G_-$ be a fixed central involution; that is, $\iota \alpha = \alpha \iota$ for all $\alpha \in G$ and $\iota^2 = 1$.
We write $\bar a \coloneqq a^\iota$ for a letter $a \in A$ and treat $\bar a$ as a formal inverse of $a$; this notation is extended to words by $\bar w \coloneqq w^\iota$ for $w \in A^\ast$.
Since $\iota$ is a central involution, we obtain $\overline{uv} = \bar v \bar u$,
  $\bar{\bar w} = w$, and $\overline{w^\alpha} = \bar w^\alpha$ for all $u,v,w \in A^\ast$ and $\alpha \in G$.

The \emph{free group} $F(A)$ is the quotient of the free monoid $A^\ast$ by the smallest congruence $\equiv$ such that $w \bar w \equiv 1$ holds for all $w \in A^\ast$.
Strictly speaking, $F(A)$ is the free group over the alphabet $A$ with involution $\iota$; as this involution is fixed, we omit it from the notation.\footnote{If $\bar a \neq a$ for all $a \in A$, then $F(A)$ is a free group in the usual sense, with $A$ comprising both the free generators and their (formal) inverses. In general, every letter $a \in A$ with $\bar a = a$ yields an element of order two; thus, $F(A)$ is a free product of infinite cyclic groups and groups of order two.}

We write $\pi_A \colon A^\ast \to F(A)$ for the canonical homomorphism mapping each word to its $\equiv$-class.
Since the congruence $\equiv$ is $G$-invariant, the free group $F(A)$ inherits a natural $G$-action from the free monoid $A^\ast$, with respect to which $\pi_A$ is a $G$-homomorphism.
As in the nonequivariant setting, the congruence class of a word $w \in A^\ast$ contains a unique \emph{freely reduced} word $\red(w) \in A^\ast$, meaning a word without factors of the form $a \bar a$ with $a \in A$; moreover, $\red(w)$ is obtained from $w$ by successively deleting arbitrary such factors.
Clearly, $\red(w^\alpha) = \red(w)^\alpha$ for all $w \in A^\ast$, $\alpha \in G$.

\subsection{Formal Languages}

We assume familiarity with basic concepts of formal language theory, in particular with regular and context-free languages; see, e.g., \cite{HopcroftU79} for background.
Familiarity with \EDTOL\ languages is not required, as we only use their characterization in terms of rationally controlled endomorphisms due to Asveld~\cite{Asveld77}; see, e.g., \cite{RozenbergS80} for further background.

Suppose that $A$ is a finite $G$-set.
As is well known, a language $L \subseteq A^\ast$ is regular if and only if it is \emph{recognized} by some homomorphism $\mu \colon A^\ast \to N$ to a finite monoid $N$, in the sense that $L = \mu^{-1}(\mu(L))$; see, e.g., \cite{Pin97}.
The following lemma shows that, in this characterization, we may additionally assume $N$ to be a $G$-monoid and $\mu$ to be a $G$-homomorphism.

\begin{lemma}\label{lem:equivariant-recognition}
    Let $N$ be a (finite) monoid.
    Then there is a surjective homomorphism \mbox{$\pi_N \colon N_G \to N$} from a (finite) $G$-monoid $N_G$ that is universal in the following sense: if $\varphi \colon M \to N$ is a homomorphism from a $G$-monoid $M$, then $\varphi = \pi_N \circ \varphi_G$ for a unique $G$-homomorphism $\varphi_G \colon M \to N_G$.
\end{lemma}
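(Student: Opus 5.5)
The plan is to build $N_G$ as a "coinduced" object: $N_G$ is the set of all functions $f \colon G \to N$, and $\pi_N(f) = f(1)$. The intent is that $f(\alpha)$ records the image of $u^{\bar\alpha}$, or of $u^{\alpha}$ depending on conventions, for an element $u$ of a $G$-monoid. Accordingly, for a homomorphism $\varphi \colon M \to N$ I will set
\[
\varphi_G(u) = \bigl(\alpha \mapsto \varphi(u^\alpha)\bigr).
\]

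The multiplication on $N_G$ must make $\varphi_G$ a homomorphism. Since $(uv)^\alpha = u^\alpha v^\alpha$ for $\alpha \in G_+$ and $(uv)^\alpha = v^\alpha u^\alpha$ for $\alpha \in G_-$, I define
\[
(f \cdot g)(\alpha) = f(\alpha) g(\alpha) \text{ for } \alpha \in G_+, \qquad (f \cdot g)(\alpha) = g(\alpha) f(\alpha) \text{ for } \alpha \in G_-.
\]
Equivalently, $N_G \cong N^{G_+} \times (N^{\mathrm{op}})^{G_-}$, which is a monoid with neutral element the constant function $1$. The right action is $f^\beta(\alpha) = f(\beta\alpha)$. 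This is a right action, since $(f^\beta)^\gamma(\alpha) = f^\beta(\gamma\alpha) = f(\beta\gamma\alpha) = f^{\beta\gamma}(\alpha)$.

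Next I check the $G$-monoid axioms. Clearly $1^\beta = 1$. For $\beta \in G_+$, left multiplication by $\beta$ preserves each of $G_+$ and $G_-$, so $(fg)^\beta = f^\beta g^\beta$. For $\beta \in G_-$, left multiplication by $\beta$ swaps $G_+$ and $G_-$, so the order in each coordinate reverses and $(fg)^\beta = g^\beta f^\beta$. This sign bookkeeping is the only delicate point, and it is the step where I expect to have to be careful. Finiteness of $N_G$ is immediate when $N$ and $G$ are finite. Surjectivity of $\pi_N$ follows by taking constant functions.

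Now let $\varphi \colon M \to N$ be a homomorphism from a $G$-monoid $M$, and define $\varphi_G$ as above.
\begin{itemize}
\item $\varphi_G$ is a homomorphism by the definition of the product.
\item $\varphi_G$ is equivariant, because $\varphi_G(u^\beta)(\alpha) = \varphi(u^{\beta\alpha}) = \varphi_G(u)^\beta(\alpha)$.
\item $\pi_N \circ \varphi_G = \varphi$, because $u^1 = u$.
\end{itemize}
For uniqueness, let $\psi \colon M \to N_G$ be any $G$-homomorphism with $\pi_N \circ \psi = \varphi$. Then equivariance gives
\[
\psi(u)(\alpha) = \psi(u)^\alpha(1) = \psi(u^\alpha)(1) = \varphi(u^\alpha),
\]
so $\psi = \varphi_G$.
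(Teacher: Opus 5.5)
Your proof is correct and follows the paper's construction: the paper also takes $N_G = \prod_{\alpha \in G} N_\alpha$ with $N^{\mathrm{op}}$ at the negative coordinates, projects onto the $1$-coordinate, and proves uniqueness by the same equivariance argument. The only difference is the indexing convention. The paper sets $\varphi_\alpha(u) = \varphi(u^{\bar\alpha})$ and lets the $\alpha\beta$-component of $u^\beta$ be the $\alpha$-component of $u$, which matches your convention under the relabeling $\alpha \mapsto \bar\alpha$. You also check the sign bookkeeping for the $G$-monoid axioms, which the paper leaves as routine, and that check is correct.
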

\begin{proof}
    We define $N_G$ to be the direct product $\prod_{\alpha \in G} N_\alpha$, where the factor $N_\alpha$ is $N$ itself if $\alpha \in G_+$, and the opposite monoid $N^{\mathrm{op}}$ if $\alpha \in G_-$.
    Since $G$ is finite, the monoid $N_G$ is finite whenever $N$ is.
    The homomorphism $\pi_N \colon N_G \to N$ is the projection onto the factor $N_1 = N$ at $1 \in G$.

    The group $G$ acts on $N_G$ by permuting the factors $N_\alpha$.
    More precisely, we stipulate that for all elements $u \in N_G$ and all $\alpha,\beta \in G$, the $\alpha\beta$-component of $u^\beta$ is given by the $\alpha$-component of $u$.
    A routine computation, which we omit, shows that this action turns $N_G$ into a $G$-monoid.

    Let $\varphi \colon M \to N$ be a homomorphism from a $G$-monoid $M$.
    Then, for $\alpha \in G$, $\varphi_\alpha(u) \coloneqq \varphi(u^{\bar\alpha})$ defines a homomorphism $\varphi_\alpha \colon M \to N_\alpha$.
    As $N_G$ is the direct product of the~$N_\alpha$, these maps combine into a homomorphism \mbox{$\varphi_G \colon M \to N_G$}.
    The latter clearly satisfies $\varphi = \pi_N \circ \varphi_G$.
    Since $\beta \overline{\alpha \beta} = \bar \alpha$, we have $\varphi_{\alpha \beta}(u^\beta) = \varphi_{\alpha}(u)$ for all $u \in M$ and $\alpha, \beta \in G$, showing that $\varphi_G$ is $G$-equivariant.

    To show uniqueness of $\varphi_G$, let us consider any $G$-homomorphism $\psi \colon M \to N_G$ with $\varphi = \pi_N \circ \psi$.
    Then, for every $u \in M$, the $\alpha$-component of $\psi(u) = \psi(u^{\bar \alpha \alpha}) = \psi(u^{\bar \alpha})^\alpha$ must agree with the $1$-component of $\psi(u^{\bar \alpha})$, that is, with $\pi_N(\psi(u^{\bar \alpha})) = \varphi(u^{\bar \alpha}) = \varphi_\alpha(u)$.
    Hence, $\psi = \varphi_G$.
\end{proof}

The concept of regular languages admits several generalizations to arbitrary monoids, of which we now recall the two most important ones.
For simplicity, we restrict ourselves to finitely generated monoids; see, e.g., \cite{Eilenberg74} for a general treatment.
In the following, let $M$ be a monoid and $\pi \colon A^\ast \to M$ a surjective homomorphism where, as before, $A$ is finite.

\begin{itemize}
\item A subset $R \subseteq M$ is \emph{recognizable} if it is recognized by a homomorphism $\mu \colon M \to N$ to a finite monoid $N$, meaning that $R = \mu^{-1}(\mu(R))$.\footnote{Note that \cref{lem:equivariant-recognition} applies at this level of generality.}
It is easy to see that a subset $R \subseteq M$ is recognizable if and only if its full preimage $\pi^{-1}(R) \subseteq A^\ast$ is a regular language.

\item A subset $R \subseteq M$ is \emph{rational} if $R = \pi(L)$ for some regular language $L \subseteq A^\ast$; this notion does not depend on the choice of $\pi$.
\end{itemize}

For the free group $F(A)$, rational subsets are characterized in terms of their freely reduced representatives by the following result of Benois~\cite{Benois69}.\footnote{Benois only considers the case that $\bar a \neq a$ for all $a \in A$; the proof carries over verbatim.}

\begin{theorem}[Benois]\label{thm:benois}
    For every regular language $L \subseteq A^\ast$, the language $\red(L) \subseteq A^\ast$ is regular.
    Consequently, a subset $R \subseteq F(A)$ is rational if and only if $\red(\pi_A^{-1}(R)) \subseteq A^\ast$ is regular.
\end{theorem}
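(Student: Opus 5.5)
The plan is to realize free reduction of a regular language as a saturation of a finite automaton. Start from a nondeterministic finite automaton $\mathcal{A}$ with state set $Q$ recognizing $L$. Repeatedly add $\varepsilon$-transitions by the following rule: whenever there are states $p, q, r, s$ with $p \xrightarrow{a} q$, a path $q \xrightarrow{\varepsilon^\ast} r$, and $r \xrightarrow{\bar a} s$ for some $a \in A$, add an $\varepsilon$-transition $p \to s$. Since $Q$ is finite, this saturation terminates after at most $\abs{Q}^2$ additions, giving an automaton $\mathcal{A}'$ with $\varepsilon$-moves.

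The first claim is that a word $v$ labels a path from $p$ to $q$ in $\mathcal{A}'$, with $\varepsilon$-edges allowed, if and only if some word $u$ with $u \equiv v$ labels a path from $p$ to $q$ in $\mathcal{A}$, where more precisely $v$ arises from $u$ by deleting factors $a\bar a$. I will prove this by induction. In one direction, each new $\varepsilon$-edge stands for a path labelled $a\,x\,\bar a$ in which $x$ itself reduces to the empty word. In the other direction, I induct on the number of deletions of factors $a \bar a$, using the standard nesting property: the cancelled letters of a reduction pair up in a well-parenthesized way. This is the same argument as the one showing that the set of words $\equiv 1$ is context-free, localized to paths in the automaton.

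Next, let $L' = L(\mathcal{A}')$. Then $\red(L) = L' \cap \mathrm{Red}$, where $\mathrm{Red} = A^\ast \setminus \bigcup_{a \in A} A^\ast a \bar a A^\ast$ is the regular set of freely reduced words.
\begin{itemize}
\item For $\subseteq$: given $u \in L$, the word $\red(u)$ is obtained from $u$ by deletions, so $\red(u) \in L'$ by the claim, and it is reduced.
\item For $\supseteq$: a reduced $v \in L'$ satisfies $v \equiv u$ for some $u \in L$, and uniqueness of reduced representatives gives $v = \red(u)$.
\end{itemize}
Regularity of $\red(L)$ follows, since regular languages are closed under intersection. The main technical care lies in the direction of the claim showing that every reduction of $u$ is simulated by $\mathcal{A}'$. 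Deletions of $a\bar a$ may occur in an arbitrary order, so I would fix a canonical order (innermost matched pairs first) and argue on the matching structure. This works uniformly even when $\bar a = a$, which is why the footnote's remark about Benois' restriction is harmless.

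For the consequence, take $R \subseteq F(A)$. Suppose $R$ is rational, say $R = \pi_A(L)$ with $L$ regular. Then $\red(\pi_A^{-1}(R)) = \red(L)$, since both sets are the set of reduced representatives of the elements of $R$, and this language is regular by the first part. Conversely, if $K = \red(\pi_A^{-1}(R))$ is regular, then $\pi_A(K) = R$, so $R$ is rational by definition.
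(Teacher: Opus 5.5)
Your proof is correct. The paper gives no proof of its own: it cites Benois and remarks only that the argument carries over when $\bar a = a$. Your $\varepsilon$-saturation of an automaton for $L$, followed by intersection with the regular set of freely reduced words, is the standard argument behind that citation, and your derivation of the consequence for rational subsets is also correct.

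One simplification: the canonical deletion order and the nesting analysis are unnecessary. All letter-labelled edges of $\mathcal{A}'$ are original edges, and your rule allows an $\varepsilon^\ast$-path between $a$ and $\bar a$. Hence the labels of $\mathcal{A}'$-paths from $p$ to $q$ are directly closed under deleting any single factor $a\bar a$, so induction on the number of deletions is immediate.
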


As freely reduced words play an important role in our treatment, we fix a $G$-homomorphism $\mu_A \colon A^\ast \to N_A$ recognizing the language $\red(A^\ast) \subseteq A^\ast$.
Its codomain is the finite $G$-monoid $N_A = \{1, 0\} \cup (A \times A)$, in which $1$ is the neutral element, and $0$ is the zero element.
The remaining products are given by
\[
  (a_1, a_2)(a_3, a_4) =
  \begin{cases}
    (a_1, a_4) & \text{if } \bar a_2 \neq a_3, \\
    0 & \text{otherwise.}
  \end{cases}
\]
The $G$-action is defined by $(a_1, a_2)^\alpha = (a_1^\alpha, a_2^\alpha)$ for~$\alpha \in G_+$, $(a_1, a_2)^\alpha = (a_2^\alpha, a_1^\alpha)$ for~$\alpha \in G_-$, as well as $1^\alpha = 1$, $0^\alpha = 0$ for $\alpha \in G$.
On letters, $\mu_A$ is given by $a \mapsto (a, a)$.
Intuitively, $\mu_A$ remembers the first and last letter of a nonempty, freely reduced word.
It is easy to see that a word $w \in A^\ast$ is nonempty if and only if $\mu_A(w) \neq 1$, and that $w \in A^\ast$ is freely reduced if and only if $\mu_A(w) \neq 0$.

\section{Twisted Equations in Free Groups}\label{sec:twisted}

The majority of this work deals with expressions over constants and variables in a free group, subject to the action of a finite group of symmetries and to rational constraints on the variables.
In \cref{sec:expressions}, we formalize such expressions, the contexts in which they live, and substitutions between these contexts; in \cref{sec:twisted-equations}, we then turn to twisted equations in free groups.

Throughout, we fix a finite group $G$ of positive and negative symmetries as in \cref{sec:preliminaries}, together with a central negative involution $\iota \in G_-$.
For all results regarding palindrome length, $G$ and $\iota$ are as in \cref{exm:palindrome-symmetries}.
Constants are taken from a fixed finite $G$-set $A$, whereas variables are taken from finite $G$-sets $X$ disjoint from $A$, which vary throughout.
Elements of the free group~$F(A)$ and of the free groups $F(A \cup X)$ are represented by freely reduced words over $A$ and $A \cup X$, respectively.

\subsection{Expressions and Substitutions}\label{sec:expressions}

Intuitively, an expression is a word over constants and variables, where each variable stands for an unknown element of the free group $F(A)$, possibly subject to a rational constraint.
We represent the values of variables by freely reduced words, and encode their constraints by means of a finite $G$-monoid.
To this end, we fix a $G$-homomorphism $\mu_\bot \colon A^\ast \to N$ to such a monoid $N$.
We assume that $\mu_\bot$ refines the $G$-homomorphism $\mu_A \colon A^\ast \to N_A$ from \cref{sec:preliminaries}, meaning that $\mu_A = \nu \circ \mu_\bot$ for some fixed $G$-homomorphism $\nu \colon N \to N_A$.
Thus, $\mu_\bot$ provides the information needed to express constraints, whereas $\nu \circ \mu_\bot$ detects cancellations.

\begin{definition}
  A \emph{context} $C = (X, \mu)$ consists of a finite $G$-set $X$ with $X \cap A = \emptyset$, its set of \emph{variables}, and a $G$-homomorphism $\mu \colon (A \cup X)^\ast \to N$, its \emph{constraints}, such that
  \[
    \mu(a) = \mu_\bot(a)
    \text{ for all }
    a \in A
    \quad\text{and}\quad
    (\nu\circ\mu)(x) \neq 0
    \text{ for all }
    x \in X.
  \]
  The \emph{terminal context} is the unique context  $C_\bot = (X_\bot, \mu_\bot)$ without variables (i.e., $X_\bot = \emptyset$).
\end{definition}

The constraints $\mu$ are determined by their values on the variables; in fact, $\mu(x)$ can be chosen freely for a single variable $x$ from each orbit, subject only to $G_x \subseteq G_{\mu(x)}$ and $(\nu \circ \mu)(x) \neq 0$.
The latter condition will ensure that variables only take freely reduced words as values (see below).
Similarly, stipulating $(\nu \circ \mu)(x) = 1$ forces the value of a variable $x$ to be the empty word.
We call such variables \emph{singular}, and all other variables \emph{nonsingular}; moreover, we say that a context is \emph{nonsingular} if all of its variables are.

\begin{definition}\label{def:expression}
  An \emph{expression} over a context $C = (X, \mu)$ is a freely reduced word $w \in (A \cup X)^\ast$.
\end{definition}

Thus, the expressions over $C$ can be identified with the elements of the free group $F(A \cup X)$; under this identification, the expressions over $C_\bot$ are the elements of $F(A)$.

\begin{example}\label{exm:palindrome-expression}
  Let $G$ be as in \cref{exm:palindrome-symmetries}, and let $\mu_\bot = \mu_A$.
  Further, consider the $G$-set of variables $X_n = \{x_1, \bar x_1, \dotsc, x_n, \bar x_n\}$ with $\rho$ acting trivially, so that each $x_i$ represents a palindrome.
  Then the word $x_1 \cdots x_n$ is an expression over each context $(X_n, \mu)$, where the constraints $\mu \colon (A \cup X_n)^\ast \to N_A$ range over the finitely many suitable extensions of $\mu_A$; together, they serve as a template for the products of at most $n$ palindromes in $F(A)$.
\end{example}

\begin{definition}
  A $G$-homomorphism $\sigma \colon (A \cup X)^\ast \to (A \cup X')^\ast$ is a \emph{substitution} between contexts $(X, \mu)$ and $(X', \mu')$, written $\sigma \colon (X, \mu) \to (X', \mu')$, provided that $\sigma(a) = a$ for all $a \in A$, that $\mu = \mu' \circ \sigma$, and that every $x' \in X'$ occurs in the value $\sigma(x)$ of some $x \in X$.
\end{definition}

The second condition ensures that substitutions respect constraints, and the third condition states that they cannot introduce superfluous variables.
Every substitution $\sigma \colon C \to C'$ induces a map $w \mapsto \sigma_\ast(w) \coloneqq \red(\sigma(w))$ from expressions over $C$ to expressions over $C'$; in terms of free groups, $\sigma_\ast$ is simply the $G$-homomorphism $F(A \cup X) \to F(A \cup X')$ induced by $\sigma$.
Clearly, identities and compositions of substitutions are substitutions, and $(\sigma' \circ \sigma)_\ast = \sigma'_\ast \circ \sigma_\ast$.

As a map, a substitution $\sigma$ is fully determined by its restriction to the set of variables~$X$; in fact, it suffices to specify the images of a single variable $x$ from each orbit.
Conversely, any choice of words $\sigma(x) \in (A \cup X')^\ast$ with $G_x \subseteq G_{\sigma(x)}$ and $\mu'(\sigma(x)) = \mu(x)$ for these representatives extends uniquely to a $G$-homomorphism satisfying the first two conditions above.
We frequently specify substitutions in this way; for instance, we speak of the substitution $\sigma$ \emph{defined} by $x \mapsto u x \bar v$, with the understanding that $\sigma(y) = y$ for all $y \in X \setminus x^G$.

Substitutions $\sigma \colon C \to C_\bot$ to the terminal context are called \emph{terminal}.
We write $\tsub(C)$ for the set of these substitutions.
By definition, a terminal substitution $\sigma$ is simply a $G$-equivariant assignment of words $\sigma(x) \in A^\ast$ to the variables $x \in X$ such that $\mu_\bot(\sigma(x)) = \mu(x)$.

We think of the words $\sigma(x)$ with $\sigma \in \tsub(C)$ as the possible \emph{values} of a variable $x \in X$; they are freely reduced since $\mu_A(\sigma(x)) = (\nu \circ \mu)(x) \neq 0$.
Hence, by \cref{thm:benois}, requiring $x$ to take values in a rational subset $R \subseteq F(A)$ amounts to choosing $\mu(x)$ from $\mu_\bot(\red(\pi_A^{-1}(R))) \subseteq N$, provided that $\mu_\bot$ recognizes the regular language $\red(\pi_A^{-1}(R)) \subseteq A^\ast$.
More generally, we think of the words $\sigma_\ast(w)$ with $\sigma \in \tsub(C)$ as the possible values of an expression $w$ over $C$.

Not every context admits a terminal substitution, and we call the ones that do \emph{feasible}; thus, $C$ is feasible if $\tsub(C) \neq \emptyset$.\footnote{In our setting, the group $G$ and the $G$-homomorphism $\mu_\bot \colon A^\ast \to N$ are fixed.
As feasibility of $C = (X, \mu)$ depends only on the pairs $(G_x, \mu(x))$ associated with the variables $x \in X$, it is fairly easy to decide.}
For every substitution $\tau \colon C \to C'$ and every $\sigma' \in \tsub(C')$, we have $\sigma' \circ \tau \in \tsub(C)$; that is, $\tau$ induces a map $\tau^\ast \colon \tsub(C') \to \tsub(C)$ where $\tau^\ast(\sigma') \coloneqq \sigma' \circ \tau$.
For instance, if $C$ is feasible, then every singular variable $x \in X$ must satisfy $\mu(x) = \mu_\bot(1) = 1$, as its only possible value is the empty word~$1$; hence, the substitution $\varepsilon$ defined by $x \mapsto 1$ for all singular variables $x$ leads from $C$ to the nonsingular context $C'$ obtained by removing these variables, and it induces a bijection $\varepsilon^\ast \colon \tsub(C') \to \tsub(C)$.

\subsection{General Twisted Equations}\label{sec:twisted-equations}

With the notions of expressions and substitutions at hand, twisted equations and their solutions can now be easily defined as follows.

\begin{definition}
  A \emph{twisted equation $u = v$} in the free group $F(A)$ consists of two expressions, $u$~and~$v$, over a common context $C$.
  A \emph{solution} of $u = v$ is a terminal substitution $\sigma \in \tsub(C)$ with $\sigma_\ast(u) = \sigma_\ast(v)$.
  We write $\sol(u = v) \subseteq \tsub(C)$ to denote the set of all solutions of $u = v$.
\end{definition}

Besides a variable $x \in X$, the expressions $u$ and $v$ may contain its images $x^\alpha$ under arbitrary symmetries $\alpha \in G$, and every solution $\sigma$ satisfies $\sigma(x^\alpha) = \sigma(x)^\alpha$; in this sense, the equation is twisted by the symmetries in $G$.
Moreover, by choosing appropriate constraints, we can require the variables to take values in rational subsets of $F(A)$; see \cref{sec:expressions}.

\begin{example}\label{exm:palindrome-equation}
  Suppose that $u \in A^\ast$ is freely reduced.
  Then $\pl(u) \leq n$ if and only if the twisted equation $x_1 \cdots x_n = u$ has a solution over one of the contexts $(X_n, \mu)$ from \cref{exm:palindrome-expression}.
  Hence, if we replace the right-hand side by a variable $y$ on which $G$ acts freely, the values $\sigma(y)$ of all solutions $\sigma \in \sol(x_1 \cdots x_n = y)$ are exactly the freely reduced words $w$ with $\pl(w) \leq n$.
\end{example}

In the following, we consider \emph{systems of twisted equations} in $F(A)$, meaning finite sets
\[
  S = \{u_1 = v_1, \dotsc, u_n = v_n\}
\]
of twisted equations over a common context $C$, which must all be satisfied simultaneously.
Accordingly, the solution set of such a system $S$ is defined as $\sol(S) \coloneqq \bigcap_i \sol(u_i = v_i)$.

Fixing representatives $x_1, \dotsc, x_k$ of the orbits of the variables of $C$, we encode every solution~$\sigma$ by the word $\sigma(x_1) \# \cdots \# \sigma(x_k)$, where $\#$ is a new letter; by an \emph{\EDTOL\ description} of $\sol(S)$, we mean an \EDTOL\ system generating the language of these encodings.

\medskip

Twisted equations arise naturally when solving equations in virtually free groups, and they have been studied in this context before; see, e.g., \cite{DahmaniG10, DiekertE20}.
There, however, the twists are given by automorphisms only: a finite group acts by automorphisms on a free group or, in the combinatorial setting of~\cite{DiekertE20}, on a free monoid with involution.
The following reduction shows that admitting anti-automorphisms as well does not lead beyond this setting; it rests on the observation that, since $G_- = G_+ \iota$, every negative symmetry acts on $F(A)$ as a positive symmetry followed by inversion.

\begin{proposition}\label{pro:twisted-equations}
    On input of a system of twisted equations $S$ with rational constraints in the free group $F(A)$, one can decide in \PSPACE\ whether $\sol(S) \neq \emptyset$.
    Moreover, $\sol(S)$ has an \EDTOL\ description computable by a \PSPACE-transducer.
\end{proposition}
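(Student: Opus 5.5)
The plan is to reduce the given system $S$ to a system of twisted equations in which every twist is an automorphism of $F(A)$, and then invoke the results of Diekert and Elder~\cite{DiekertE20} on equations with rational constraints in virtually free groups. These results give exactly the two claimed conclusions (\PSPACE\ decidability and \PSPACE-computable \EDTOL\ descriptions) for free groups (with involution) on which a finite group acts by automorphisms. The key observation, already noted before the statement, is that $G_- = G_+\iota$ because $\iota$ is a central negative involution. Hence for $\alpha \in G_-$ we have $\alpha = \beta\iota$ with $\beta = \alpha\iota \in G_+$, and in $F(A)$ we get $u^\alpha = \overline{u^{\beta}}$, the inverse of the image of $u$ under an automorphism. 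Inversion is already available inside the group.

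The construction proceeds in three steps. First, restrict the action to the subgroup $G_+$, which acts on $A$ and on $X$ by automorphisms of $F(A)$ and $F(A\cup X)$. Second, rewrite every expression: each occurrence of a variable $y \in X$ is a letter, and I fix orbit representatives for the $G_+$-action. For $y = x^\beta$ with $\beta \in G_+$, I keep $y$ as the twisted variable $x^\beta$. For $y = x^{\beta\iota}$ I replace it by $\overline{x^\beta}$, meaning the inverse letter, which is still a letter of the free group over $A \cup X$ with its involution; so in fact nothing changes syntactically. The real content is in the third step, on stabilizers and constraints. The condition $G_x \subseteq G_{\sigma(x)}$ may include negative symmetries, say $x^\alpha = x$ with $\alpha = \beta\iota \in G_-$. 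This becomes the equation $x = \overline{x^{\beta}}$, where $x^\beta$ is a $G_+$-twisted variable, so I add it as an extra twisted equation to $S$. The constraint $\mu(x)$ for the $G$-homomorphism $\mu$ is transferred by \cref{lem:equivariant-recognition}: restricting $\mu$ gives a $G_+$-homomorphism, and the requirement that $\sigma(x)$ be freely reduced with $\mu_\bot(\sigma(x)) = \mu(x)$ is a rational (indeed recognizable) constraint in $F(A)$ by \cref{thm:benois}. It is therefore admissible in~\cite{DiekertE20}.

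It remains to check that the solution sets correspond. A $G_+$-equivariant assignment that satisfies the added equations $x = \overline{x^\beta}$ for each negative stabilizer element extends uniquely to a $G$-equivariant one, by setting $\sigma(x^{\beta\iota}) \coloneqq \overline{\sigma(x)^\beta}$. It is well defined precisely because of the added equations, and it respects the anti-automorphic action because $\iota$ is central. Conversely, every solution of $S$ restricts to such an assignment. Both the encoding $\sigma(x_1)\#\cdots\#\sigma(x_k)$ and the size of the reduced system are obtained by a logspace (in particular \PSPACE) transformation. The \EDTOL\ description for the $G_+$-system therefore yields one for $\sol(S)$, after possibly applying a rational transduction that selects one representative per $G$-orbit. \EDTOL\ is closed under such projections (homomorphic images of the $\#$-separated blocks), which handles orbits of $G_+$ that merge into one $G$-orbit.

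The main obstacle I expect is matching the exact input format of~\cite{DiekertE20}. They work over a free monoid with involution acted on by a finite group by automorphisms, with constraints given by a $G_+$-equivariant morphism to a finite monoid with involution. One must check that $N$, with the involution induced by $\iota$, fits their framework, and that fixed letters $\bar a = a$ (elements of order two) are allowed. If they are not directly allowed, I would encode $F(A)$ as a virtually free group: a free product of $\mathbb{Z}$'s and $\mathbb{Z}/2$'s, extended by the finite group $G_+$. The twisted equations then become ordinary equations with rational constraints in the semidirect product $F(A)\rtimes G_+$, where $x^\beta$ is written as the conjugate $\bar\beta x \beta$, and the virtually free group theorem applies directly.
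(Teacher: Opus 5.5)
Your argument is essentially the paper's. Since $G_- = G_+\iota$, negative twists become positive twists followed by inversion, so the system becomes one of ordinary equations with rational constraints in the virtually free group $F(A)\rtimes G_+$, to which Diekert--Elder applies. Your fallback is exactly the paper's construction, which makes every $x\in X$ its own unknown and adds $x\bar x=1$ and $\overline{s_\alpha}\,x\,s_\alpha=x^\alpha$ for all $\alpha\in G_+$.

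Two minor slips. First, the sets $\pi_A(\mu_\bot^{-1}(\mu(x)))$ are rational but in general not recognizable in $F(A)$; for instance, nonempty finite subsets of an infinite group never are. Second, if you substitute $x^\beta\mapsto\overline{s_\beta}\,x\,s_\beta$ for orbit representatives, you must also add the equations $\overline{s_\gamma}\,x\,s_\gamma=x$ for positive $\gamma\in G_x$, which the paper's per-variable equations enforce automatically.
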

\begin{proof}
  Let $H = \{ s_\alpha : \alpha \in G_+ \}$ be an isomorphic copy of $G_+$, with multiplication $s_\alpha s_\beta = s_{\alpha \beta}$; and let $F(A) \rtimes H$ be the semidirect product in which $\overline{s_\alpha}\,g\,s_\alpha = g^\alpha$ for all $g \in F(A)$ and $\alpha \in G_+$.
  Since the group $F(A)$ has finite index in $F(A) \rtimes H$, the latter is a finitely generated virtually free group.

  Suppose that $S$ consists of twisted equations over the context $C = (X, \mu)$.
  We then read $S$ as a system $S'$ of ordinary equations over $F(A) \rtimes H$, in which each $x \in X$ is an unknown in its own right and each $a \in A$ denotes the constant $\pi_A(a)$.
  To account for the $G$-action and the constraints, we add to $S'$ the equations
  \[
    x \bar x = 1
    \quad\text{and}\quad
    \overline{s_\alpha} \, x \, s_\alpha = x^\alpha
    \qquad\text{for all } x \in X \text{ and } \alpha \in G_+,
  \]
  together with the rational constraints $x \in R_x \coloneqq \pi_A(\mu_\bot^{-1}(\mu(x))) \subseteq F(A)$ for all $x \in X$.

  Clearly, $S'$ can be computed from $S$ in polynomial time.
  As $\iota$ acts on $F(A)$ by inversion and $G$ is generated by $G_+ \cup \{\iota\}$, the added equations state precisely that the values assigned to the unknowns depend $G$-equivariantly on $x \in X$.
  Moreover, since $(\nu \circ \mu)(x) \neq 0$, all words in $\mu_\bot^{-1}(\mu(x))$ are freely reduced, so that $\pi_A$ maps this language bijectively onto the set $R_x \subseteq F(A) \subseteq F(A) \rtimes H$.
  Therefore, the solutions of $S$ are in bijection with those of $S'$.

  The decidability statement now follows from the result of the first author and Elder~\cite{DiekertE20} that, in every fixed finitely generated virtually free group, satisfiability of systems of equations with rational constraints is decidable in \PSPACE.
  Likewise, the \EDTOL\ description can be obtained using the corresponding result in~\cite{DiekertE20}.
\end{proof}

Combining \cref{exm:palindrome-equation} and \cref{pro:twisted-equations} immediately yields the following result.

\begin{corollary}\label{cor:palindrome-length-pspace}
  The function $\pl$ is computable in \PSPACE.
  Moreover, for every $n \geq 0$, the freely reduced words $w$ with $\pl(w) \leq n$ form an \EDTOL\ language, with a \PSPACE-computable description.
\end{corollary}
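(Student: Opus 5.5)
The plan is to combine \cref{exm:palindrome-equation} with \cref{pro:twisted-equations}, taking $G = \{1,\iota,\rho,\iota\rho\}$ from \cref{exm:palindrome-symmetries} and $\mu_\bot = \mu_A$, adjusted slightly where needed.

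For computability, take a word $w \in A^\ast$ and first compute $u = \red(w)$ in linear time. Then $\pl(w) = \pl(u)$. A trivial upper bound is $\pl(u) \le \abs{u}$, since every letter is a palindrome. So $\pl(u)$ is the least $n \le \abs{u}$ such that $x_1 \cdots x_n = u$ is solvable over one of the contexts $(X_n, \mu)$ of \cref{exm:palindrome-expression}. There are only finitely many such contexts for each $n$: $\mu(x_i)$ ranges over elements of $N_A$ fixed by $\rho$ and different from $0$, and each candidate has polynomial size.

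Each system is built in polynomial time and decided in \PSPACE\ by \cref{pro:twisted-equations}. Since polynomially many \PSPACE\ tests are run in sequence and space is reused between them, the whole computation stays in \PSPACE. Here $n$ is bounded by the input length, which matters because $G$ and $A$ are fixed while the equation grows with $n$.

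For the language statement, fix $n$ and introduce a fresh variable orbit $y^G$ on which $G$ acts freely, as in \cref{exm:palindrome-equation}. Its constraint $\mu(y)$ ranges over all nonzero elements of $N_A$. For each of the finitely many contexts, \cref{pro:twisted-equations} gives an \EDTOL\ description of $\sol(x_1\cdots x_n = y)$, encoded as $\sigma(x_1)\#\cdots\#\sigma(x_n)\#\sigma(y)$ after choosing the orbit representatives so that $y$ comes last. By \cref{exm:palindrome-equation}, the required language is the union over contexts of the projections onto the last component.

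The remaining work is closure. \EDTOL\ languages are closed under finite union. They are also closed under the homomorphism that erases every letter up to the last $\#$ by sending the letters of the first $n$ components to the empty word. A homomorphism cannot do this directly, because the same letters of $A$ occur in every component. To fix this, I would mark the components by renaming their alphabets, using a disjoint copy $A_i$ for the $i$-th component. This can be done inside the rationally controlled endomorphism characterization of Asveld, or alternatively by putting $y$ first and using closure under a gsm/rational transduction that deletes everything after the first $\#$, since \EDTOL\ is closed under deterministic gsm maps.

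I expect the main obstacle to be this bookkeeping: making sure the \EDTOL\ description from \cite{DiekertE20}, transported via \cref{pro:twisted-equations}, lets us extract one variable's component while keeping a \PSPACE-transducer. The transformation is a fixed finite-state relabeling of the output, so it composes with the \PSPACE\ transducer without trouble.
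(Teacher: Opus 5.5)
Your proposal is correct and follows the paper's argument: the paper derives the corollary in one line from \cref{exm:palindrome-equation} and \cref{pro:twisted-equations}, and the projection onto the component of $y$, which you handle via closure of \EDTOL\ under deterministic gsm mappings, is a detail the paper leaves implicit. One small slip: for fixed $n$ there are $(\abs{A}+1)^n$ admissible choices of $\mu(x_1), \dotsc, \mu(x_n)$, so the number of tests is exponential rather than polynomial, but enumerating them with a polynomial-size counter while reusing space still keeps the computation in \PSPACE.
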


\begin{remark}\label{rem:coxeter}
  For palindrome length, the group constructed in the proof of \cref{pro:twisted-equations} is a Coxeter group.
  To see this, suppose that $\rho$ acts trivially on $A$, as in \cref{sec:palindrome-length}.
  Then $H = \{1, s\}$ with $s \coloneqq s_{\iota\rho}$, and the involution $s$ satisfies $s a s = \bar a$ for all $a \in A$.
  Hence, the resulting group $W = F(A) \rtimes H$ is generated by the involutions $s$ and $as$, where $a$ ranges over representatives of the orbits of $\iota$ on $A$.
  In fact, $W$ is the Coxeter group on these generators in which $s$ and $as$ commute if $\bar a = a$, and all other products of two distinct generators have infinite order; in particular, if $\iota$ acts freely on $A$, then $W$ is a free product of groups of order two, that is, a universal Coxeter group.

  Since $g \in F(A)$ is a palindrome\footnote{We call $g \in F(A)$ a palindrome if its freely reduced representative over $A$ is.} if and only if $g^{\iota\rho} = \bar g$ (cf.~\cite[Lemma~1.4]{BardakovST05}), that is, if and only if $(gs)^2 = 1$, one easily verifies that $g$ is a palindrome if and only if $gs$ is a reflection, that is, a conjugate of one of the above generators.
  Since $(g_1 s)(g_2 s) = g_1 \bar g_2$ for all palindromes $g_1, g_2 \in F(A)$, and inverses of palindromes are palindromes, we obtain
  \[
    \pl(g) = \min\{\ell_T(g), \ell_T(gs)\}
  \]
  for all $g \in F(A)$, where $\ell_T(x)$ denotes the reflection length of~$x \in W$, that is, the least number of reflections whose product is $x$.
  By a theorem of Dyer~\cite{Dyer01}, the reflection length $\ell_T(x)$ is the least number of letters that must be deleted from a reduced expression for $x$ over the above generators to obtain an expression for the identity.
  If $\iota$ acts freely on $A$, the latter can be computed in cubic time by dynamic programming over noncrossing matchings of equal letters; this is the route taken by the concurrent works~\cite{Jayadevan26, Liao26, KinyonU26}.
  Dyer's theorem itself determines only the palindrome length of a given element.
  From the structure of optimal matchings, Liao~\cite{Liao26} moreover derives the characterization of \cref{thm:palindrome-length} in the case that $\iota$ acts freely on $A$.
  In contrast, our proof of \cref{thm:palindrome-length} is direct and does not use Coxeter groups.
  Moreover, our \cref{thm:parametric} yields parametric descriptions for arbitrary quadratic twisted expressions.
\end{remark}

Both statements of \cref{cor:palindrome-length-pspace} will be sharpened in \cref{sec:palindrome-length}: we show that $\pl$ can be computed in \LOGCFL\ (\cref{thm:palindrome-length-compute}), and that the languages above are also context-free, with an explicit uniform family of grammars in place of \PSPACE-computable \EDTOL\ systems (\cref{lem:palindrome-length-grammar,pro:palindrome-length-language}).

\section{Quadratic Twisted Equations in Free Groups}\label{sec:quadratic}

By \cref{pro:twisted-equations}, systems of twisted equations in free groups can be solved in \PSPACE.
The known algorithms for solving equations in virtually free groups are, however, intricate: the algorithm of Dahmani and Guirardel~\cite{DahmaniG10} is based on the Rips classification of measured band complexes, whereas that of the first author and Elder~\cite{DiekertE20} relies on Je\.z's recompression technique~\cite{Jez16}.
Both approaches are of great theoretical value, but we are not aware of any implementation.
Restricting the class of equations affords simpler and, indeed, practical algorithms.
In this section, we present a simpler algorithm for solving quadratic twisted equations.
Its underlying structure goes back to Comerford and Edmunds~\cite{ComerfordE81}, who considered quadratic equations in free groups; here, we extend it to the twisted setting and to rational constraints.
Besides deciding solvability, our algorithm yields parametric descriptions of the values of quadratic expressions (\cref{thm:parametric}), and these will serve as our main tool for studying palindrome length in \cref{sec:palindrome-length}.

We first introduce quadratic expressions in \cref{sec:quadratic-expressions}.
In \cref{sec:symbolic-reduction}, we then make precise what it means to reduce an expression symbolically, that is, without knowing the values of its variables.
In \cref{sec:automaton}, we introduce an automaton whose transitions carry out symbolic reduction step by step by means of basic substitutions, and we derive our main applications.
The proof that symbolic reduction is complete is given in \cref{sec:completeness}.

\subsection{Quadratic Expressions and Splitting}\label{sec:quadratic-expressions}

Recall that we call a variable $x$ palindromic if $x^\alpha = x$ for some $\alpha \in G_-$.
Whether this is the case depends only on its orbit $x^G$, since $G_{x^\alpha} = G_x^\alpha$ and $\sgn(\bar\alpha\beta\alpha) = \sgn(\beta)$ for all $\alpha, \beta \in G$.
Accordingly, we call an orbit \emph{palindromic} if its elements are, and \emph{nonpalindromic} otherwise.

\begin{definition}\label{def:quadratic}
  An expression $w$ over a context $(X, \mu)$ is \emph{quadratic} if the following hold.
  \begin{itemize}
    \item If $x^G \subseteq X$ is nonpalindromic, then variables from $x^G$ occur at most twice in $w$ in total.
    \item If $x^G \subseteq X$ is palindromic, then variables from $x^G$ occur at most once in $w$ in total.
  \end{itemize}
\end{definition}

\begin{example}\label{exm:quadratic-general}
  For $G = \{1, \iota\}$, quadratic twisted equations include the usual quadratic equations in free groups; for instance, $x_1 x_1 \cdots x_n x_n = u$ and $\bar x_1 \bar y_1 x_1 y_1 \cdots \bar x_n \bar y_n x_n y_n = u$ express that $u$ is a product of $n$ squares or $n$ commutators, respectively.
  More generally, the twisted conjugacy equation $\bar x^\alpha v x = u$ with $\alpha \in G$ is quadratic, and so is $x_1 \cdots x_n = u$ if each variable $x_i$ is fixed by some negative symmetry, including the equation from \cref{exm:palindrome-equation}.
\end{example}

For palindrome length, we obtain the following quadratic expressions.

\begin{example}\label{exm:palindrome-quadratic}
The expression $x_1 \cdots x_n$ with variables $x_i = x_i^\rho$ from \cref{exm:palindrome-expression} is quadratic, since each of the palindromic orbits contributes exactly one occurrence.
Similarly, both of the expressions
\[
  x_1 \cdots x_n \bar u
  \quad\text{and}\quad
  x_1 \cdots x_n \bar y
\]
with constant $u \in \red(A^\ast)$ and variable $y \notin x_1^G \cup \cdots \cup x_n^G$ are quadratic.
\end{example}

The stricter bound for palindromic orbits in \cref{def:quadratic} is due to the fact that, under substitutions, each palindromic occurrence behaves like two nonpalindromic ones.
For example, if $x$ is nonpalindromic, then the substitution $\tau$ defined by $x \mapsto yx$ with a new nonpalindromic variable $y$ replaces each occurrence from $x^G$ by a word containing one occurrence from $y^G$.
If, however, $x^\alpha = x$ for some $\alpha \in G_-$, then the natural analogue, defined by $x \mapsto y x y^\alpha$ with $\alpha^2 \in G_y \subseteq G_+$, instead replaces each occurrence from $x^G$ by a word containing two occurrences from $y^G$.
In both cases, the bounds in \cref{def:quadratic} ensure that, for every quadratic expression $w$, the resulting expression $\tau_\ast(w)$ remains quadratic.

Using a variant of the latter, we can even eliminate palindromic variables: depending on its value, a palindromic variable $x$ can be split via a substitution defined by $x \mapsto y a y^\alpha$ with a new nonpalindromic variable $y$ and a suitable constant $a \in A \cup \{1\}$.

\begin{lemma}\label{lem:splitting}
  Suppose that $C = (X, \mu)$ is a context.
  Then, for every $\sigma \in \tsub(C)$, there exist a substitution $\tau \colon C \to C'$ to a nonsingular context $C' = (X', \mu')$ with $\abs{X'/G} \leq \abs{X/G}$ and without palindromic variables, as well as a terminal substitution $\sigma' \in \tsub(C')$ with $\sigma = \sigma' \circ \tau$.
  Moreover, we have $\abs{\tau_\ast(w)} \leq 3 \cdot \abs{w}$ for every expression $w$ over $C$, and $\tau_\ast$ preserves quadraticity.
\end{lemma}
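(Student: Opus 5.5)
Handle one palindromic orbit at a time and then remove singular variables. Fix $\sigma \in \tsub(C)$ and a palindromic variable $x$, and choose $\alpha \in G_x \cap G_-$.

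\textbf{Splitting $\sigma(x)$.} The value $w = \sigma(x)$ is freely reduced and satisfies $w^\alpha = w$. Write $w = u q v$ with $\abs{u} = \abs{v}$ and $\abs{q} \le 1$. By the observation in the preliminaries, $v = u^\alpha$ and $q^\alpha = q$, so $q = a \in A \cup \{1\}$ with $a^\alpha = a$.

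\textbf{Building the new variable.} Introduce a new orbit $y^G$ with stabilizer $G_y \coloneqq G_x \cap G_+ \cap G_u$; note that $G_u \supseteq G_w \cap G_+$. Take $\mu'(y) \coloneqq \mu_\bot(u)$, and if $u = 1$, let $y$ be singular. Define $\tau$ by $x \mapsto y\,a\,y^\alpha$, extended equivariantly to the orbit $x^G$.

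Well-definedness needs $G_x \subseteq G_{\tau(x)}$. For $\beta \in G_x \cap G_+$, we need $y^\beta = y$, so $G_y$ should be all of $G_x \cap G_+$. This works because $\beta$ fixes $w$, and a positive symmetry fixing $w = uav$ fixes $u$ by the factorization observation; hence $G_x \cap G_+ \subseteq G_u$, and we may simply put $G_y = G_x \cap G_+$. For $\beta \in G_x \cap G_-$, we have $\beta = \gamma\alpha$ with $\gamma \in G_x \cap G_+$, and then $(y a y^\alpha)^\beta = (y a y^\alpha)^\alpha$ up to $\gamma$. Since $\alpha^2 \in G_x \cap G_+ = G_y$, we get $(y a y^\alpha)^\alpha = y^{\alpha^2} a^\alpha y^\alpha = y a y^\alpha$.

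\textbf{Properties of $y$ and $\tau$.} The variable $y$ is nonpalindromic because $G_y \subseteq G_+$. The condition $\mu = \mu' \circ \tau$ holds because $\mu_\bot(u a u^\alpha) = \mu_\bot(w) = \mu(x)$. Moreover, $(\nu \circ \mu')(y) = \mu_A(u) \ne 0$ because $u$ is freely reduced. Setting $\sigma'(y) \coloneqq u$ and $\sigma' = \sigma$ elsewhere gives an equivariant terminal substitution with $\sigma = \sigma' \circ \tau$. Equivariance of $\sigma'$ holds because $G_y \subseteq G_u$.

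\textbf{Composing the steps.} Doing this for a representative of each palindromic orbit replaces each such orbit by exactly one new orbit, so the number of orbits does not grow. Afterwards, compose with the substitution $\varepsilon$ that deletes singular variables (those with $u = 1$), which yields a nonsingular context. The third condition in the definition of substitution holds because each new variable occurs in some image.

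\textbf{Length and quadraticity.} Each variable is replaced by a word of length at most $3$, so $\abs{\tau(w)} \le 3\abs{w}$, and free reduction only shortens. For quadraticity: a palindromic orbit has at most one occurrence in $w$, and each occurrence becomes two occurrences from $y^G$, so that nonpalindromic orbit appears at most twice. Free reduction cannot increase occurrence counts.

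The main obstacle is the stabilizer bookkeeping in the well-definedness step. The care needed is in checking that $G_y = G_x \cap G_+$ makes $y a y^\alpha$ invariant under all of $G_x$, and that this is independent of the choice of $\alpha$.
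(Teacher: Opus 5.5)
Your proof is correct and follows the paper's argument: write $\sigma(x) = u\,a\,u^\alpha$, split each palindromic orbit via $x \mapsto y\,a\,y^\alpha$ with a new nonpalindromic variable $y$, and then eliminate singular variables; the length bound and quadraticity come from one palindromic occurrence becoming at most three symbols that contain two occurrences from $y^G$. The only difference is that you take $G_y = G_x \cap G_+$ instead of the paper's $G_y = G_u \cap G_+$, and both choices work; in the positive case of your well-definedness check you should also record $a^\beta = a$ and $y^{\alpha\beta} = y^\alpha$, which hold because $\beta$ fixes each factor of $w = u a u^\alpha$ and $\alpha\beta\bar\alpha \in G_x \cap G_+$.
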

\begin{proof}
  Suppose that $x \in X$ is palindromic, say $x^\alpha = x$ with $\alpha \in G_-$.
  Then we write $\sigma(x) = y' a y'^\alpha$ for some $y' \in A^\ast$ and $a \in A \cup \{1\}$ with $y'^{\alpha\alpha} = y'$ and $a^\alpha = a$.
  We also introduce a new variable $y$ with stabilizer $G_y = G_{y'} \cap G_+$; in particular, $y$ is nonpalindromic and $y^{\alpha\alpha} = y$.
  Let $\tau$ be the substitution defined by $x \mapsto y a y^\alpha$.
  It leads to a context $C' = (X', \mu')$, where $X' = (X \setminus x^G) \cup y^G$, and $\sigma' \in \tsub(C')$ with $\sigma = \sigma' \circ \tau$ and $\sigma'(y) = y'$.
  Clearly, $\abs{\tau_\ast(z)} \leq 3$ for all $z \in x^G$, while $\abs{\tau_\ast(z)} = 1$ for all $z \in X \setminus x^G$; in particular, $\abs{\tau_\ast(w)} \leq 3 \cdot \abs{w}$ holds for every expression $w$ over $C$.
  Moreover, if $w$ is a quadratic expression, then variables from $x^G$ occur at most once in $w$, so that variables from $y^G$ occur at most twice in $\tau_\ast(w)$; hence, the expression $\tau_\ast(w)$ is again quadratic.

  Repeating this for every palindromic orbit of $X$, and then eliminating all singular variables as in \cref{sec:expressions}, we obtain the desired substitution as a composition; since each variable of $X$ is affected by at most one of these splittings, the bounds carry over.
\end{proof}

\subsection{Symbolic Free Reduction}\label{sec:symbolic-reduction}

Recall that the values of an expression $w$ are the freely reduced words $\sigma_\ast(w) = \red(\sigma(w)) \in A^\ast$, with $\sigma \in \tsub(C)$ any terminal substitution from its context $C = (X, \mu)$.
The words $\sigma(w)$ themselves, however, are in general not freely reduced, and the difficulty of determining the values of $w$ lies essentially in their free reduction.
Our aim is to carry out this free reduction symbolically, that is, without knowledge of the terminal substitution $\sigma$; we refer to this as \emph{symbolic reduction}.
For this, we will make use of the fact that the constraints $\mu$ of $C$ already determine whether cancellations occur.
How much cancels, however, depends on the unknown $\sigma$; how to reveal this information is covered in \cref{sec:automaton}.

\begin{definition}\label{def:reduced}
  A word $w \in (A \cup X)^\ast$ is \emph{reduced} if $(\nu \circ \mu)(w) \neq 0$.\footnote{Strictly speaking, this notion depends on the context $C$ through its constraints $\mu$, and so do the notions introduced in \cref{def:syllable} below. Whenever the context is not clear, we say that $w$ is reduced \emph{over}~$C$.}
\end{definition}

For every terminal substitution $\sigma \in \tsub(C)$, we have $\mu_A \circ \sigma = \nu \circ \mu_\bot \circ \sigma = \nu \circ \mu$.
Since a word over $A$ is freely reduced if and only if its image under $\mu_A$ is nonzero, it follows that $\sigma(w)$ is freely reduced if and only if $w$ is reduced; in particular, $\sigma_\ast(w) = \sigma(w)$ for every reduced expression~$w$.
Note that, as expressions are freely reduced by definition, being reduced is an additional requirement: for instance, if $(\nu \circ \mu)(x) = (b, \bar a)$ for a variable $x \in X$ and $a, b \in A$, then the expression $xa$ is not reduced, and indeed every word $\sigma(xa)$ ends with $\bar a a$.

Since $0 \in N_A$ is a zero element, factors of reduced words are reduced.
Moreover, whether a product of two elements of $A \times A \subseteq N_A$ vanishes depends only on the second component of the first factor and the first component of the second factor.
Consequently, for $u,w,v \in (A \cup X)^\ast$ with $(\nu \circ \mu)(w) \neq 1$, the word $uwv$ is reduced if and only if both words $uw$ and $wv$ are~--~thus, being a reduced word is a local property.
This motivates the following definition.

\begin{definition}\label{def:syllable}
  A \emph{syllable} of $w \in (A \cup X)^\ast$ is a maximal nonempty reduced factor of~$w$, where maximality is with respect to inclusion.
  We write $\syl(w)$ for the number of syllables~of~$w$.
\end{definition}

Since every single symbol in $A \cup X$ is reduced, the syllables of a word $w$ cover $w$, and $w$ is reduced if and only if $\syl(w) \leq 1$.
In general, syllables may overlap: if $x \in X$ is singular, then the expression $\bar a x a$ with $a \in A$ has two overlapping syllables, viz.\ $\bar a x$ and $x a$.
By the following lemma, however, overlaps consist of singular variables only.

\begin{lemma}\label{lem:syllable-basics}
  Let $w \in (A \cup X)^\ast$ be a word over $C = (X, \mu)$.
  Then the following hold.
  \begin{enumerate}[label=(\arabic*)]
    \item\label{lem:syllable-basics-1}
      The intersection of two distinct syllables of $w$ is a word in singular variables.
    \item\label{lem:syllable-basics-2} The number $\syl(w)$ is the least $k \geq 0$ such that $w = u_1 \cdots u_k$ for reduced words $u_1, \dotsc, u_k$.
  \end{enumerate}
  Consequently, if $C$ is nonsingular, then every word $w \in (A \cup X)^\ast$ factors into its syllables and, in particular, if $w$ is not reduced, then some factor of length two of $w$ is not reduced.
\end{lemma}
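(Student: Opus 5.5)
The plan is to work entirely in the monoid $N_A$ via $\nu\circ\mu$, using two facts already noted before the statement. First, reducedness is local: for $(\nu\circ\mu)(w)\neq 1$, the word $uwv$ is reduced if and only if $uw$ and $wv$ are. Second, $(\nu\circ\mu)(z)=1$ holds exactly when $z$ is a word in singular variables. For the latter, note that every symbol $s\in A\cup X$ has $(\nu\circ\mu)(s)\neq 0$. We have $(\nu\circ\mu)(s)=1$ only for singular variables, since constants map to $(a,a)$. A product of elements of $A\times A$ is never $1$, so the claim follows.

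For \ref{lem:syllable-basics-1}, I would take two distinct syllables, i.e.\ occurrences $w[i..j]$ and $w[k..l]$, and assume they intersect. By maximality neither contains the other, so after relabeling $i<k\le j<l$, and the intersection is $z=w[k..j]$. Suppose $z$ contains a symbol that is not a singular variable. Then $(\nu\circ\mu)(z)\neq1$, and locality applied to $w[i..k-1]\,z\,w[j+1..l]$ shows that $w[i..l]$ is reduced. This reduced factor strictly contains both syllables, contradicting maximality. Hence $z$ consists of singular variables.

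For \ref{lem:syllable-basics-2}, I would prove two inequalities.
\begin{itemize}
\item Lower bound: each $u_m$ in a factorization $w=u_1\cdots u_k$ into reduced words lies inside some syllable, namely a maximal reduced factor containing it. Each syllable must be covered by the pieces, and distinct syllables are not contained in one another. I would argue that every syllable contains a position not covered by any other syllable; otherwise it would be covered by two overlapping neighbours and, by locality, their union would be reduced. Sending each $u_m$ to a syllable containing it then hits every syllable, since these private positions must be covered, so $k\ge \syl(w)$.
\item Upper bound: order the syllables by starting position. The intersections are words in singular variables, so I can trim each syllable by deleting the overlap with its predecessor. Factors of reduced words are reduced, so this yields a factorization of $w$ into $\syl(w)$ reduced words, after discarding empty pieces if any occur.
\end{itemize}

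The consequence follows directly. In a nonsingular context the overlaps are empty by \ref{lem:syllable-basics-1}, so the syllables tile $w$. If $w$ is not reduced, there are at least two syllables. The last symbol of one syllable together with the first symbol of the next gives a factor of length two. If it were reduced, locality would make the union of the two syllables reduced, contradicting maximality.

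I expect the main obstacle to be the lower bound in \ref{lem:syllable-basics-2}, namely making rigorous that each syllable has a private position when singular overlaps are present. The same locality argument should handle it: if syllable $S$ were covered by its neighbours $S'$ and $S''$ (ordered by start), then $S'\cap S''$ meets $S$ in a non-singular symbol or else $S$ consists of singular symbols. In the first case $S'\cup S''$ is reduced. In the second, $S$ is contained in a larger reduced factor. Either way maximality is contradicted.
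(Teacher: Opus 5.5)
Your route is essentially the paper's. Part (1) uses the same locality argument. Your upper bound trims each syllable by its overlap with the predecessor, where the paper cuts at starting positions. Your lower bound maps each piece to a syllable containing it and uses private positions to get surjectivity; this is equivalent to the paper's argument that the first nonsingular symbols $i_v$ of distinct syllables lie in distinct factors.

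The one step that does not work as written is the justification of private positions.

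\begin{itemize}
\item In the bullet, the neighbours $S'$ and $S''$ covering $S$ need not overlap in a nonsingular symbol; they may even be disjoint. In that case locality says nothing about $S'\cup S''$.
\item In your final paragraph, the dichotomy ``$S'\cap S''$ meets $S$ in a nonsingular symbol'' versus ``$S$ consists of singular symbols'' is not exhaustive. The syllable $S$ may contain a nonsingular symbol that lies in $S\cap S'$ but not in $S''$.
\end{itemize}

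The repair follows from your own part (1) and needs no neighbours.
\begin{itemize}
\item Any nonsingular symbol of a syllable $S$ is private, since by (1) it cannot lie in the intersection of $S$ with another syllable.
\item If $S$ has no nonsingular symbol, then $(\nu\circ\mu)(S)=1$. So either $S=w$ is the only syllable, or $S$ extends by an adjacent symbol $s$ with $(\nu\circ\mu)(Ss)=(\nu\circ\mu)(s)\neq 0$, contradicting maximality.
\end{itemize}
This is exactly the paper's choice of $i_v$.

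For the upper bound, the fact that intersections consist of singular variables plays no role. What you need is that the syllables cover $w$ and that, ordered by their starting positions, their end positions $e_1<e_2<\cdots$ also strictly increase, since no syllable contains another. Then the trimmed pieces $w[e_{i-1}+1..e_i]$ are nonempty suffixes of syllables and tile $w$. Your argument for the final consequence is fine.
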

\begin{proof}
  \textit{Ad \ref{lem:syllable-basics-1}.}
  If two distinct syllables shared a constant or a nonsingular variable, then, by locality, their union would be a reduced factor, contradicting maximality.

  \smallskip

  \textit{Ad \ref{lem:syllable-basics-2}.}
  Cutting $w$ at the starting positions of its syllables yields a factorization $w = u_1 \cdots u_k$ with $k = \syl(w)$, in which each $u_i$ is a factor of a syllable and, hence, reduced.

  Conversely, let $w = u_1 \cdots u_m$ with reduced words $u_1, \dotsc, u_m$.
  We may assume that $w$ contains a constant or a nonsingular variable, as $w$ is reduced otherwise.
  Then so does every syllable $v$, and we let $i_v$ be the position of the first such symbol in $v$.
  If $i_v < i_{v'}$ were two such positions within the same factor $u_j$, then, by locality, the union of $v$ with the reduced factor from $i_v$ to $i_{v'}$ would be reduced.
  By maximality, this union would be equal to $v$, contradicting the first item.
  Hence, distinct syllables $v$ have their positions $i_v$ in distinct factors $u_j$.
  This shows that $\syl(w) \leq m$.
\end{proof}

As we will now show, substitutions may merge syllables, but they never split them.
The number of syllables thus serves as a measure of progress for symbolic reduction.

\begin{lemma}\label{lem:syllables}
    Let $\tau \colon C \to C'$ be a substitution.
    Then, for every expression $w$ over $C$,
    \[
        \syl(\tau_\ast(w)) \leq \syl(w).
    \]
\end{lemma}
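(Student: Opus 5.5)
We use the characterization from \cref{lem:syllable-basics}\ref{lem:syllable-basics-2}: $\syl(v)$ is the least $k$ such that $v$ factors into $k$ reduced words. Let $k = \syl(w)$ and write $w = u_1 \cdots u_k$ with reduced words $u_1, \dotsc, u_k$ over $C$. Applying $\tau$ gives $\tau(w) = \tau(u_1) \cdots \tau(u_k)$.

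The first step is that substitutions preserve reducedness. Since $\mu = \mu' \circ \tau$, we get $(\nu \circ \mu')(\tau(u_i)) = (\nu \circ \mu)(u_i) \neq 0$, so each $\tau(u_i)$ is reduced over $C'$. Hence $\tau(w)$ factors into $k$ reduced words, and $\syl(\tau(w)) \leq k$.

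It remains to pass from $\tau(w)$ to $\tau_\ast(w) = \red(\tau(w))$. Deleting a factor $s \bar s$ with $s \in A \cup X'$ should not increase the number of syllables. Fix a factorization $v = v_1 \cdots v_m$ into $m$ reduced words, and suppose the cancelled pair $s \bar s$ is deleted.
\begin{itemize}
  \item If both letters lie inside one $v_j$, deleting them leaves a prefix and a suffix of $v_j$. These are factors of a reduced word, so each is reduced.
  \item The concatenation of this prefix and suffix need not be reduced. We would therefore like to argue with the letter at the junction instead.
\end{itemize}

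Since this local argument becomes delicate, a cleaner route works at the level of $N_A$. For a reduced word $u$ and a symbol $s$ with $us\bar s$ a factor, write $u = u's$ and ask whether $u'$ remains reduced. It does, because factors of reduced words are reduced. The key claim is then:

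\begin{quote}
If $v = v_1 \cdots v_m$ with each $v_j$ reduced and $v'$ is obtained by deleting one factor $s\bar s$, then $v'$ is again a product of at most $m$ reduced words.
\end{quote}

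If $s$ and $\bar s$ lie in different factors $v_j$ and $v_{j'}$ with $j < j'$, then every factor strictly between them is empty, so they are adjacent pieces. Deleting $s$ from the end of $v_j$ and $\bar s$ from the start of $v_{j'}$ keeps both pieces reduced, and pieces may become empty. If both letters lie in the same $v_j$, then $s\bar s$ is a factor of a reduced word. We then split $v_j$ into the prefix before $s$ and the suffix after $\bar s$. This raises the count by one, which is exactly the problem case.

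To handle that case, observe that $s \bar s$ being reduced forces $s$ to be a variable with $(\nu\circ\mu')(s) = 1$, that is, a singular variable. A constant $a$ never satisfies $\mu_A(a\bar a) \neq 0$, and a nonsingular variable with value $(b,c)$ gives $(b,c)(\bar c,\bar b) = 0$. Singular variables contribute the neutral element $1$ of $N_A$. Hence the prefix–suffix concatenation has the same $\nu\circ\mu'$-image as $v_j$, namely nonzero, and is reduced. So the count does not increase in this case either.

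Inducting over the free-reduction steps gives $\syl(\tau_\ast(w)) \leq \syl(\tau(w)) \leq \syl(w)$. The main obstacle is exactly this bookkeeping of cancellations inside a single reduced piece, and it is resolved by the singular-variable observation above.
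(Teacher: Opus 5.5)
Your proof is correct and follows the paper's argument. You factor $w$ into $\syl(w)$ reduced words, note that their images under $\tau$ stay reduced because $\mu = \mu' \circ \tau$, and show that each free-reduction step keeps a factorization into at most that many reduced words; a cancelled pair inside one piece forces a singular variable, which leaves the $\nu \circ \mu'$-image unchanged. The exploratory middle paragraphs (the abandoned junction argument and the remark about $u = u's$) can simply be deleted.
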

\begin{proof}
  Let $C' = (X', \mu')$ and $k = \syl(w)$.
  By \cref{lem:syllable-basics}~\ref{lem:syllable-basics-2}, we have $w = u_1 \cdots u_k$ with reduced words $u_i$ and, since $\mu = \mu' \circ \tau$, the words $\tau(u_i)$ are reduced over $C'$.
  Deleting a factor $\bar v v$ from such a product of $k$ reduced words again yields a product of $k$ reduced words.
  To see this, it suffices to consider the case of a single symbol $v \in A \cup X'$.
  If $\bar v$ and $v$ lie in the same reduced word, then $v$ is a singular variable, as $(\nu \circ \mu')(\bar v v) = 0$ otherwise.
  In this case, the deletion does not change the image under $\nu \circ \mu'$, so that the resulting word is still reduced.
  If $\bar v$ and $v$ lie in different reduced words, the deletion merely shortens these words.
  This shows that $\tau_\ast(w) = \red(\tau(w))$ is a product of $k$ reduced words, and \cref{lem:syllable-basics}~\ref{lem:syllable-basics-2} yields the claim.
\end{proof}

\subsection{An Automaton for Symbolic Reduction}\label{sec:automaton}

Symbolic reduction is best phrased in terms of an automaton that recognizes the substitutions achieving a complete symbolic reduction, or solving a given quadratic system of twisted equations.
We first describe an infinite such automaton $\mathcal{A}$, which carries no distinguished initial or final states, and then single out finite subautomata $\mathcal{A}_\lambda$ (together with a choice of initial and final states) that suffice for the specific task at hand.

\medskip

We define a \emph{state} of $\mathcal{A}$ as a pair $E = (C, w)$ comprising a context $C$ and a quadratic expression~$w$ over it, where the context is considered only up to isomorphism.
We also write $E = (X, \mu, w)$ in case $C = (X, \mu)$.
We impose three restrictions on states: the context $C$ is feasible and nonsingular, and no palindromic variable occurs in $w$, that is, $\abs{w}_x = 0$ for every palindromic $x \in X$.
Feasibility excludes states without terminal substitutions, and it is easily tested; see \cref{sec:expressions}.
The other two restrictions are for ease of exposition only and offending variables can simply be eliminated or split; see \cref{lem:splitting}.

For a state $E = (C, w)$, we write $\tsub(E) = \tsub(C)$, and we call $E$ \emph{terminal} if $C$ is the terminal context $C_\bot$.
Moreover, we call $E$ \emph{reduced} if $w$ is reduced over $C$; in particular, every terminal state is reduced.
Our goal will be to describe the reduced states, and in particular the terminal states, reachable in $\mathcal{A}$ from a given set of initial states.

The transitions of $\mathcal{A}$ are substitutions between the contexts, subject to the requirement that they transform the expressions accordingly.
In more detail, a \emph{transition} $E \xrightarrow{\tau} E'$ between states $E = (C, w)$ and $E' = (C', w')$ is a basic substitution $\tau \colon C \to C'$ with $\tau_\ast(w) = w'$.
Here, a substitution $\tau \colon (X, \mu) \to (X', \mu')$ is called \emph{basic} if $\abs{\tau(x)} \leq 3$ for all variables~$x \in X$.

Since contexts are considered only up to isomorphism, the above length bound ensures that every state of $\mathcal{A}$ has only finitely many outgoing transitions.
The automaton $\mathcal{A}$ itself, however, is infinite.
To pass to a finite automaton, we define the \emph{weight} of $E = (X, \mu, w)$ as
\[
    \lambda(E) = 4 \cdot \syl(w) + 2 \cdot \abs{X/G} + \abs{w};
\]
the reason for this choice will become apparent in the proof of \cref{thm:quadratic}.
For $\lambda \geq 0$, we then write $\mathcal{A}_\lambda$ for the restriction of $\mathcal{A}$ to those states $E$ with $\lambda(E) \leq \lambda$, together with the transitions between them.
Since both the number of variables and the length of the expression of a state are bounded in terms of its weight, the subautomaton $\mathcal{A}_\lambda$ of $\mathcal{A}$ is finite.
In fact, it has $2^{\mathcal{O}(\lambda \log \lambda)}$ states, with $2^{\mathcal{O}(\lambda \log \lambda)}$ outgoing transitions each, where the implicit constants depend only on $G$, $A$, and $N$.

\medskip

The raison d'être of the automata $\mathcal{A}_\lambda$ is that they allow us to perform symbolic reduction.
Each transition $E \xrightarrow{\tau} E'$ from $E = (C, w)$ to $E' = (C', w')$ can be thought of as revealing information about an a priori unknown terminal substitution $\sigma \in \tsub(E)$ which factors as $\sigma = \sigma' \circ \tau$ for some $\sigma' \in \tsub(E')$.
This is sound: since $w' = \tau_\ast(w) = \red(\tau(w))$ by definition, the word $\sigma(w) = \sigma'(\tau(w))$ reduces to $\sigma'(w')$.
The following theorem shows that symbolic reduction is also complete.

\begin{theorem}\label{thm:quadratic}
    Let $E = (X, \mu, w)$ be a state of $\mathcal{A}_\lambda$ and $\sigma \in \tsub(E)$.
    Then there exist a path
    \[
        E = E_0 \xrightarrow{\tau_1} E_1 \xrightarrow{\tau_2} \dotsb \xrightarrow{\tau_n} E_n = E'
    \]
    in $\mathcal{A}_\lambda$ to a reduced state $E' = (X', \mu', w')$ and $\sigma' \in \tsub(E')$ with $\sigma = \sigma' \circ \tau_n \circ \dotsb \circ \tau_2 \circ \tau_1$.
\end{theorem}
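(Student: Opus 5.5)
We argue by induction on a well-founded measure, for instance the pair $(\lambda(E), \abs{\sigma(w)})$ ordered lexicographically, or simply on the total length $\sum_{x} \abs{\sigma(x)}$ over orbit representatives together with $\lambda(E)$. If $w$ is already reduced over $C$, we take the empty path and $\sigma' = \sigma$. Otherwise, $C$ is nonsingular, so by \cref{lem:syllable-basics} some factor of length two of $w$ is not reduced. Since $w$ is freely reduced as a word over $A \cup X$ and constants are reduced among themselves, this factor involves at least one variable: it has the form $x c$, $c x$, or $x y^\beta$ (up to the $G$-action), where $(\nu\circ\mu)$ of the factor is $0$. This means that $\sigma(x)$ ends with a letter $\bar b$ while the next symbol's value starts with $b$ (or symmetrically).

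The key step is to choose one basic substitution $\tau$ that reveals this cancelling letter, together with $\sigma''$ with $\sigma = \sigma'' \circ \tau$. First, if some variable has $\sigma(x)$ of length one, we replace $x$ by that constant (removing the orbit, reducing $\abs{X/G}$ by one and hence decreasing the weight by $2$, while $\abs{w}$ does not grow). Otherwise, we pick the variable $x$ in the offending factor and substitute $x \mapsto x \bar b$ (or $x \mapsto b x$ at the front end, $x \mapsto b x \bar b'$ if both ends cancel), with the new variable's constraint $\mu'(x)$ determined by $\sigma(x)$ with the letter stripped off; equivariance is ensured since $x$ is nonpalindromic, so the orbit gets a consistent definition. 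For a factor $x y^\beta$ with two variables we instead peel from the longer value, or, when the values cancel wholesale, use $x \mapsto x z$, $y^\beta \mapsto \bar z \ldots$ type substitutions of length at most three; possibly we first split using \cref{lem:splitting} if a palindromic variable arises, which is why basic substitutions allow length three. After this step, $\tau_\ast(w)$ is quadratic (each orbit still occurs at most twice), and the letter $\bar b b$ cancels, so $\abs{\tau_\ast(w)}$ does not exceed $\abs{w}$ by much while the total value length $\sum \abs{\sigma''(x)}$ strictly drops.

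The main obstacle is verifying that every intermediate state stays inside $\mathcal{A}_\lambda$, i.e.\ $\lambda$ does not increase. By \cref{lem:syllables} the syllable count never increases. A peeling substitution $x \mapsto x \bar b$ inserts up to two new constants (two occurrences of the orbit) and the cancellation removes at least one; the net growth in $\abs{w}$ is bounded by $+1$ or $+2$. I would show that this growth is compensated: either the cancellation merges two syllables (so $4\cdot\syl$ drops by $4$, paying for the growth), or the constant $\bar b$ is absorbed on the other side, keeping $\abs{w}$ unchanged; if a variable disappears, the $2\abs{X/G}$ term pays. The case analysis on whether the two occurrences of the orbit of $x$ are adjacent to constants or to other variables, and whether the cancellation was the last obstruction between two syllables, is the delicate part; the coefficients $4,2,1$ are chosen exactly to make every case non-increasing. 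Finally, the induction terminates because the total value length strictly decreases along each step, so a reduced state $E'$ with the required factorization $\sigma = \sigma'\circ\tau_n\circ\dotsb\circ\tau_1$ is reached.
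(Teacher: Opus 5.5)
Your skeleton matches the paper's: induct on $\abs{\sigma(w)}$, pick a nonreduced factor of length two, and find one transition that shortens $\sigma(w)$ without raising $\lambda$. For a factor $xa$, your peeling $x\mapsto x\bar a$ (or $x\mapsto\bar a$) is exactly the paper's case (1). No compensation is needed there: $\tau(w)$ gains at most two constants and loses the two letters of $\bar a a$, so $\abs{w'}\le\abs{w}$.

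\textbf{The gap: factors $xy$ of two variables.} Peeling one letter, say $x\mapsto xb$ and $y\mapsto\bar b y$, puts a new constant next to the \emph{other} occurrence of each of $x^G$ and $y^G$. So $\abs{w}$ grows by $2$ per step, whereas the two syllables meeting at $xy$ merge only once the whole cancellation has been exposed. For example, take $G=\{1,\iota\}$, $w=xy\bar x\bar y$, $\sigma(x)=dab$ and $\sigma(y)=\bar b\bar a c$, with $a,b,c,d$ from distinct free $\iota$-orbits. Then $\lambda(E)=4\cdot2+2\cdot2+4=16$. One peel gives $w'=xy\bar b\bar x\bar y b$, which still has $\syl(w')=2$ and two orbits, so $\lambda(E')=18$. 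With $m$ cancelling letters the weight climbs by $2(m-1)$ before any syllables merge. This is unbounded, so no fixed $\mathcal{A}_\lambda$ contains such paths. Your assertion that the coefficients $4,2,1$ make every step non-increasing is therefore false for your substitutions; the weight is calibrated for a different move.

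\textbf{The missing idea: reveal the whole cancellation in one transition.}
\begin{itemize}
\item Write $\sigma(x)=x'\bar z'$ and $\sigma(y)=z'y'$ with $z'$ maximal, so that $x'y'$ is freely reduced.
\item Add a fresh variable $z$ with $G_z=G_{z'}\cap G_+$, and apply $x\mapsto x\bar z$, $y\mapsto zy$.
\item Then $\tau_\ast(xy)=xy$ is reduced. If $x',y'\neq1$, the two syllables merge, and the resulting $-4$ pays exactly for the new orbit ($+2$) and the two new occurrences of $z$ ($+2$).
\item If $x'=1$ or $y'=1$, that orbit is eliminated instead.
\end{itemize}
In the example this yields $xyz\bar x\bar y\bar z$, with one syllable, three orbits, length $6$, and weight $16$.

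\textbf{The case $y=x^\alpha$ with $\alpha\in G_+$.} This is where length-three substitutions are really needed. Your $x\mapsto x\bar b$ forces $y\mapsto y\bar b^\alpha$, which peels the wrong end of $y$ and produces no cancellation. One must instead use $x\mapsto z^{\bar\alpha}x\bar z$, which gives $y\mapsto zy\bar z^\alpha$. This presupposes that at most half of $\sigma(xy)$ cancels. Otherwise the uncancelled middle $q'$ satisfies $q'^{\iota\alpha}=q'$. It is then captured by a new palindromic variable $q$ via $x\mapsto z^{\bar\alpha}\bar q\bar z$, and $q$ does not occur in $w'$.

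These are the paper's cases (2)--(4), with the bounds collected in \cref{tbl:substitutions}.
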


In the notation of \cref{thm:quadratic}, we have $\red(\sigma(w)) = \sigma'(w')$; in other words, symbolic reduction truly implements free reduction at the level of expressions.
Before we turn to the proof of \cref{thm:quadratic}, which is given in \cref{sec:completeness}, let us first explore some of its applications.

\medskip

To apply \cref{thm:quadratic} to a concrete problem, we need to choose initial states.
Typically, such a problem is given by a quadratic expression $w$ over a $G$-set of variables $X$ whose constraints are only partially prescribed, for instance by rational constraints on some of the variables.
We first add the missing details: we consider all contexts $C = (X, \mu)$ compatible with the prescription, that is, all admissible choices of $\mu(x) \in N$ for representatives $x$ of the orbits of $X$, and discard the infeasible ones.
For each of the remaining contexts $C$, we then split palindromic variables and eliminate singular ones as in the proof of \cref{lem:splitting}.
Accordingly, the \emph{initial states} of $(C, w)$ are all states $(C', \tau_\ast(w))$, where $\tau \colon C \to C'$ is a substitution of the form constructed in that proof; the initial states of the problem are those of all pairs $(C, w)$ obtained in the first step.
By \cref{lem:splitting,lem:syllables}, each of these initial states has weight at most $\lambda = 7 \cdot \abs{w} + 2 \cdot \abs{X/G}$.

By \cref{thm:quadratic,lem:splitting}, this choice of initial states is both sound and complete, and it yields the following parametric description of the values of quadratic expressions.

\begin{theorem}\label{thm:parametric}
  Let $w$ be a quadratic expression over $C$.
  Then one can compute reduced states $(C_1, w_1), \dotsc, (C_k, w_k)$ and substitutions $\tau_i \colon C \to C_i$ with $(\tau_i)_\ast(w) = w_i$ such that
  \[
    \{\, \sigma_\ast(w) : \sigma \in \tsub(C) \,\}
    =
    \bigcup_{i = 1}^{k} \{\, \sigma'(w_i) : \sigma' \in \tsub(C_i) \,\}
  \]
  or, more precisely, $(\sigma' \circ \tau_i)_\ast(w) = \sigma'(w_i)$ for all $\sigma' \in \tsub(C_i)$.
  Moreover, the states $(C_i, w_i)$ can be computed in time $2^{\mathcal{O}(\lambda \log \lambda)}$, where $\lambda = 7 \cdot \abs{w} + 2 \cdot \abs{X/G}$ for $C = (X, \mu)$, and the implicit constant depends only on $G$, $A$, and $N$; in particular, $k \in 2^{\mathcal{O}(\lambda \log \lambda)}$.
\end{theorem}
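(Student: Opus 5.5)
The plan is to take $\tau_i$ to be the composites of splitting substitutions and transition labels along paths in the finite automaton $\mathcal{A}_\lambda$ with $\lambda = 7\cdot\abs{w} + 2\cdot\abs{X/G}$, and to take $(C_i,w_i)$ to be the reduced states at which these paths end.

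\emph{Construction.} We compute all initial states of $(C,w)$ as described before the statement. There are finitely many substitutions $\tau \colon C \to C'$ of the form built in the proof of \cref{lem:splitting}, since each palindromic orbit is split via $x \mapsto y a y^\alpha$ with finitely many choices of $a$ and $\mu'(y)$, after which singular variables are eliminated. Infeasible results are discarded. By \cref{lem:splitting,lem:syllables} each initial state $(C',\tau_\ast(w))$ has weight at most $\lambda$. Indeed, $\syl(\tau_\ast(w)) \leq \syl(w) \leq \abs{w}$, $\abs{\tau_\ast(w)} \leq 3\abs{w}$, and $\abs{X'/G} \leq \abs{X/G}$, so the weight is at most $4\abs{w} + 2\abs{X/G} + 3\abs{w}$. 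We then explore $\mathcal{A}_\lambda$ from these initial states, for instance by breadth-first search. For every reduced state $E_i = (C_i,w_i)$ reached, we record one composite substitution $\tau_i = \tau_n \circ \dots \circ \tau_1 \circ \tau$ along some path. Compositions of substitutions are substitutions, and $(\tau_i)_\ast = (\tau_n)_\ast \circ \dots \circ (\tau_1)_\ast \circ \tau_\ast$ gives $(\tau_i)_\ast(w) = w_i$. Recording one path per reached state suffices, because each $\sigma' \in \tsub(C_i)$ only needs some witness.

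\emph{Soundness.} Let $\sigma' \in \tsub(C_i)$. Then $\sigma = \sigma' \circ \tau_i \in \tsub(C)$, and $\sigma(w) = \sigma'(\tau_i(w))$ reduces to $\sigma'(\red(\tau_i(w))) = \sigma'(w_i)$. Since $w_i$ is reduced over $C_i$, the word $\sigma'(w_i)$ is freely reduced, as noted after \cref{def:reduced}. Hence $(\sigma' \circ \tau_i)_\ast(w) = \sigma'(w_i)$, which gives the inclusion $\supseteq$.

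\emph{Completeness.} Let $\sigma \in \tsub(C)$. By \cref{lem:splitting}, $\sigma = \sigma_0 \circ \tau$ for one of the splitting substitutions $\tau$ and some $\sigma_0 \in \tsub(C')$, so $C'$ is feasible and $(C',\tau_\ast(w))$ is an initial state. \Cref{thm:quadratic} then yields a path in $\mathcal{A}_\lambda$ to a reduced state $E_i$ together with $\sigma'$ such that $\sigma_0 = \sigma' \circ (\text{path})$. The step needing care is that the recorded $\tau_i$ for $E_i$ may come from a different path. This is harmless: equality of value sets only requires $\sigma_\ast(w) = \sigma'(w_i)$. Soundness applied along the path from $\sigma$ gives $\sigma_\ast(w) = \red(\sigma(w)) = \sigma'(w_i)$, and $\sigma'(w_i)$ lies in the right-hand set for $E_i$ independently of which $\tau_i$ was recorded.

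\emph{Complexity.} $\mathcal{A}_\lambda$ has $2^{\mathcal{O}(\lambda\log\lambda)}$ states, each with $2^{\mathcal{O}(\lambda\log\lambda)}$ outgoing transitions. Enumerating transitions, which amounts to listing basic substitutions and computing $\tau_\ast(w)$ and the weight, together with testing feasibility and reducedness, is polynomial in $\lambda$ per item. The search therefore runs in time $2^{\mathcal{O}(\lambda\log\lambda)}$, and $k$ is bounded by the number of states.

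The only real obstacle is \cref{thm:quadratic} itself, which is assumed here. The remaining subtlety is getting the weight bound for initial states right, so that everything stays inside $\mathcal{A}_\lambda$.
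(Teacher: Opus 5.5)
Your proposal is correct and follows the paper's proof essentially step by step. You build the initial states by splitting, with weight at most $\lambda$, take as $\tau_i$ the composite substitutions to the reduced states reachable in $\mathcal{A}_\lambda$, get soundness from reducedness of $w_i$, and get completeness from \cref{lem:splitting,thm:quadratic} together with the observation that $\sigma'(w_i)$ depends only on the reduced state and not on the recorded path; the time bound comes from the size of $\mathcal{A}_\lambda$.
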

\begin{proof}
  Let $C = (X, \mu)$ and $\lambda = 7 \cdot \abs{w} + 2 \cdot \abs{X/G}$.
  Let $(C_1, w_1), \dotsc, (C_k, w_k)$ be the reduced states reachable in $\mathcal{A}_\lambda$ from the initial states of $(C, w)$, and let $\tau_i$ be the composition of the substitutions leading from $C$ to an initial state and then along a path to $(C_i, w_i)$; thus, $(\tau_i)_\ast(w) = w_i$.
  For $\sigma' \in \tsub(C_i)$, the word $\sigma'(w_i)$ is freely reduced since $w_i$ is reduced; hence, $(\sigma' \circ \tau_i)_\ast(w) = \sigma'_\ast(w_i) = \sigma'(w_i)$.
  Conversely, for every $\sigma \in \tsub(C)$, \cref{lem:splitting,thm:quadratic} yield a path from an initial state to some reduced state $(C_i, w_i)$ and $\sigma' \in \tsub(C_i)$ with $\sigma_\ast(w) = \sigma'(w_i)$.
  Since this value depends only on the reduced state reached, and not on the path, the claim follows.
  The time bound follows from those on the number of states and transitions of $\mathcal{A}_\lambda$.
\end{proof}

\begin{remark}\label{rem:parametric}
  \Cref{thm:parametric} describes the values of $w$ by finitely many parametric forms in which no cancellation occurs; in this sense, symbolic reduction eliminates free cancellation and reduces questions about the values of $w$ in $F(A)$ to questions about words in $A^\ast$.
  In terms of unification, it provides a finite complete set of parametric solutions of $w = y$ with respect to a new variable $y$.
  For the substitutions themselves, a finite description of this kind cannot be expected in general; consider, e.g., the equation $\bar x \bar y x y = 1$, whose solutions $\sigma$ are exactly those for which $\sigma(x)$ and $\sigma(y)$ are powers of a common element.
  Here, the paths of $\mathcal{A}_\lambda$ are needed, which leads to the \EDTOL\ descriptions of \cref{cor:quadratic-equations}.
\end{remark}

Using the automata $\mathcal{A}_\lambda$, we can also decide whether a given quadratic system of twisted equations has a solution and even effectively describe its solution set.
Here, a system
\[
  S = \{u_1 = v_1, \dotsc, u_n = v_n\}
\]
is called \emph{quadratic} if the word $u_1 v_1 \cdots u_n v_n$ satisfies the occurrence bounds of \cref{def:quadratic}.

\begin{corollary}\label{cor:quadratic-equations}
    On input of a quadratic system of twisted equations $S = \{u_1 = v_1, \dotsc, u_n = v_n\}$ with rational constraints in the free group $F(A)$, one can decide in \PSPACE\ whether $\sol(S) \neq \emptyset$.
    Moreover, $\sol(S)$ has an \EDTOL\ description computable by a \PSPACE-transducer.
\end{corollary}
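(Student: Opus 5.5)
The plan is to reduce the system to a single quadratic expression and then search the finite automaton $\mathcal{A}_\lambda$ for a suitable reduced state. Let $C = (X, \mu_0)$ be the common context. First we introduce, for each $i$, a fresh nonpalindromic variable $z_i$ on which $G$ acts freely, and form
\[
  w = u_1 \bar z_1 \, v_1 \, z_1 \, u_2 \bar z_2 \, v_2 \, z_2 \cdots u_n \bar z_n \, v_n \, z_n,
\]
or more simply $w = \red(u_1 \bar v_1 y_1 u_2 \bar v_2 y_2 \cdots)$ with fresh separating variables $y_i$. The bounds of \cref{def:quadratic} for $w$ follow from the quadraticity of $S$, since each new orbit is used at most twice. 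The separators ensure that a terminal substitution $\sigma$ satisfies all equations if and only if the value of $w$, under $\sigma$ extended to the separators, is the value of $y_1 y_2 \cdots$ alone, that is, every block $u_i \bar v_i$ reduces to $1$. To make this checkable at reduced states, we take instead the expressions $u_i \bar v_i$ jointly. The concrete choice is to run symbolic reduction on $w_S = u_1 \bar v_1 \, y_1 \cdots u_n \bar v_n \, y_n$, where each $y_i$ carries a constraint with $\nu(\mu(y_i))$ a fixed letter pair that prevents any cancellation across it. Then $\sigma \in \sol(S)$ if and only if the reduced state reached has all blocks between consecutive $y_i$ empty.

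Next we enumerate the initial contexts. Rational constraints are encoded by choosing $\mu_\bot$ to recognize all $\red(\pi_A^{-1}(R_x))$, using \cref{thm:benois} and \cref{lem:equivariant-recognition}. We then range over all admissible $\mu$, discard infeasible ones, and apply \cref{lem:splitting} to get initial states of weight at most $\lambda = 7\abs{w_S} + 2\abs{X/G} + O(n)$, which is polynomial in the input. By \cref{thm:quadratic}, every solution $\sigma$ factors as $\sigma' \circ \tau_m \circ \dots \circ \tau_1$ along a path in $\mathcal{A}_\lambda$ ending in a reduced state. Conversely, by soundness, each such path into a \emph{good} reduced state, meaning one whose expression has the form $y_1 \cdots y_n$ up to the separators, yields solutions. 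Hence $\sol(S) \neq \emptyset$ if and only if some good reduced state is reachable from an initial state. Since states of $\mathcal{A}_\lambda$ have size polynomial in $\lambda$ and transitions are locally checkable (basic substitutions, feasibility, reducedness), reachability is decided in nondeterministic polynomial space by guessing a path, and Savitch gives \PSPACE.

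For the \EDTOL\ description we use Asveld's characterization. The language is $\{\sigma'(\tau_1 \cdots \tau_m(x_1)) \# \cdots\}$ over paths in the finite automaton $\mathcal{A}_\lambda$, where each transition is an endomorphism of bounded length on a fixed alphabet of variable names, namely at most $\lambda$ orbit representatives reused up to isomorphism. The final step $\sigma' \in \tsub(E')$ is itself a rational set of assignments: it assigns words in $\mu_\bot^{-1}(\mu'(x))$, and these are regular and can be produced by iterating endomorphisms $x \mapsto ax$ controlled by an automaton for $\mu_\bot$. The controlling rational language is the set of paths, and a \PSPACE-transducer can output $\mathcal{A}_\lambda$ restricted to useful states by enumerating states and testing reachability in \PSPACE.

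The main obstacle is making the separator trick correct. The constraint on $y_i$ must genuinely block cancellation, and reducedness must then force each block $u_i \bar v_i$ to cancel completely. If this fails, the fallback is to treat each equation as a separate component and track a tuple of expressions $(u_1\bar v_1, \dots, u_n \bar v_n)$ in the states, with the target reduced state having all components empty. \Cref{thm:quadratic} should extend to tuples, since its proof acts on syllables locally.
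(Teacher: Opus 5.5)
\textbf{Verdict: genuine gap in the separator step.} Your overall architecture is the paper's: encode $S$ as one quadratic expression with separators, guess a path in $\mathcal{A}_\lambda$ in nondeterministic polynomial space and apply Savitch, then use Asveld's conversion. However, the step you yourself flag as the main obstacle fails as stated.

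With separator \emph{variables} whose constraint only prescribes a first and last letter, the equivalence ``$\sigma \in \sol(S)$ if and only if $\sigma_\ast(w_S) = \sigma(y_1) \cdots \sigma(y_n)$'' is false, because the blocks may cancel against the separators. Here is a counterexample.
\begin{itemize}
\item Let $G = \{1, \iota\}$, let $b$ be the first letter prescribed for $y_1$, and pick a constant $a \notin \{b, \bar b\}$.
\item Take $S = \{a = 1,\ x = 1\}$ with $x$ unconstrained, so $w_S = a\,y_1\,x\,y_2$.
\item For any admissible value $s$ of $y_1$, put $\sigma(x) = \bar s \bar a s$. Then $\sigma_\ast(w_S) = \sigma(y_1)\sigma(y_2)$, although $S$ has no solution.
\end{itemize}
Symbolic reduction of this $\sigma$ ($y_1 \mapsto \bar z$, $x \mapsto zx$, then $x \mapsto \bar a x$) ends in the reduced state with expression $x\,y_2$ and $\mu'(x) = \mu(y_1)$. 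Since contexts are taken only up to isomorphism, this state cannot be told apart from your ``good'' state $y_1 y_2$. Your tuple fallback would require reproving \cref{thm:quadratic} for tuples, which the proposal does not do.

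\textbf{The missing idea.} Adjoin a new orbit $\#^G$ of \emph{constants} on which $G$ acts freely. Then extend $\mu_\bot$ and the constraints so that all variables take values in $(A \setminus \#^G)^\ast$. Now nothing can cancel against a separator, so $\sigma \in \sol(S)$ if and only if $\sigma_\ast(w_S) = \#^{n-1}$, where $w_S = \red(\bar u_1 v_1 \# \cdots \# \bar u_n v_n)$. At a reduced state $(C', w')$ obtained from \cref{thm:quadratic}, variables have nonempty values avoiding $\#^G$. Hence $\sigma'(w') = \#^{n-1}$ forces $w' = \#^{n-1}$, which is a property of the state itself.

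\textbf{The \EDTOL\ part.} This also handles the description more cleanly than your external generator $x \mapsto ax$. Since no variable occurs in $w' = \#^{n-1}$, the remaining variables can be eliminated inside $\mathcal{A}_\lambda$ by basic substitutions that do not increase the weight: $x \mapsto \sigma'(x)$ if $\abs{\sigma'(x)} \leq 3$, and $x \mapsto a_1 x a_2$ otherwise. Hence $\sol(S)$ is exactly the set of compositions along paths from the initial states to the single terminal state $(C_\bot, \#^{n-1})$. That is precisely the automaton over endomorphisms that Asveld's conversion requires.
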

\begin{proof}
    We encode $S$ as a single quadratic expression.
    To this end, we extend the alphabet $A$ by a new orbit $\#^G$ of separators on which $G$ acts freely, and we extend $\mu_\bot$ and the constraints of $S$ accordingly, such that variables still take values in $(A \setminus \#^G)^\ast$.
    Since no separator can cancel, a terminal substitution $\sigma$ solves $S$ if and only if $\sigma_\ast(w_S) = \#^{n-1}$, where $w_S = \red(\bar u_1 v_1 \# \cdots \# \bar u_n v_n)$.

    Let $\lambda$ be as above for $w_S$, and let $(C_\bot, \#^{n-1})$ be the single final state.
    We claim that $\sol(S)$ consists exactly of the compositions $\tau_m \circ \dotsb \circ \tau_1 \circ \tau$, where $\tau$ leads to an initial state of $w_S$ as above, and $\tau_1, \dotsc, \tau_m$ are the substitutions along a path in $\mathcal{A}_\lambda$ from this initial state to the final state.
    Soundness is clear.
    Conversely, for $\sigma \in \sol(S)$, \cref{thm:quadratic} yields a path to a reduced state $(C', w')$ and $\sigma' \in \tsub(C')$ with $\sigma'(w') = \#^{n-1}$; and, since $C'$ is nonsingular, this forces $w' = \#^{n-1}$.
    Since no variable occurs in $w' = \#^{n-1}$, the variables of~$C'$ can then be eliminated one orbit at a time by the basic substitutions defined by $x \mapsto \sigma'(x)$ if $\abs{\sigma'(x)} \leq 3$, and by $x \mapsto a_1 x a_2$ with $\sigma'(x) \in a_1 A^+ a_2$ otherwise.
    The first substitution eliminates $x$ and the second shortens the value of $x$, while neither of them increases the weight.

    By the above, $\sol(S) \neq \emptyset$ if and only if the final state is reachable in $\mathcal{A}_\lambda$ from some initial state.
    As $\lambda$ is linear in the input size, this can be decided in nondeterministic polynomial space and thus, by Savitch's theorem, in \PSPACE.
    Moreover, after fixing finitely many variable names, $\mathcal{A}_\lambda$ becomes a finite automaton over a monoid of endomorphisms of a free monoid accepting $\sol(S)$, and this automaton can clearly be output by a \PSPACE-transducer.
    The \EDTOL\ description of $\sol(S)$ now follows from the known conversion of such automata into \EDTOL\ systems; see~\cite{Asveld77, CiobanuDE16}.
\end{proof}

Note that, applied to the equations $x_1 \cdots x_n = u$ and $x_1 \cdots x_n = y$ from \cref{exm:palindrome-equation}, which are quadratic by \cref{exm:palindrome-quadratic}, \cref{cor:quadratic-equations} recovers \cref{cor:palindrome-length-pspace} by elementary means, that is, without resorting to the general results for equations in virtually free groups~\cite{DiekertE20}.

Since quadratic equations in free groups are a special case (see \cref{exm:quadratic-general}), deciding the solvability of quadratic twisted equations is \NP-hard by a result of Kharlampovich, Lys\"enok, Myasnikov, and Touikan~\cite{KharlampovichLMT10}.
They also show that quadratic equations in free groups without constraints can be solved in \NP; we do not know whether this upper bound extends to twisted equations or to equations with rational constraints.
\begin{remark}\label{rem:per-variable-constraints}
  In \cref{cor:quadratic-equations}, as throughout, the constraints of all variables are expressed via the single fixed $G$-homomorphism $\mu_\bot \colon A^\ast \to N$.
  If, instead, the rational constraints are part of the input and, moreover, given by homomorphisms to finite monoids (or by finite automata) \emph{independently} for each variable, then deciding the solvability of quadratic systems of twisted equations becomes \PSPACE-complete, provided that $A$ is sufficiently large.
  The upper bound follows as in the proof of \cref{cor:quadratic-equations}, whereas the lower bound follows by a straightforward reduction from the intersection problem for finite automata, which is \PSPACE-complete~\cite{Kozen77}; cf.~\cite{DiekertGH05} for the corresponding result on equations with rational constraints in free groups.
\end{remark}

\subsection{Completeness of Symbolic Reduction}\label{sec:completeness}

We now prove \cref{thm:quadratic}.
The key idea is to reveal~--~one basic substitution at a time~--~how the values of the variables in a nonreduced factor of length two may cancel, and then to carry out the corresponding free reduction symbolically.
Our definition of the weight ensures that this is possible without ever leaving the finite automaton $\mathcal{A}_\lambda$.

\begin{proof}[Proof of \cref{thm:quadratic}]
    Let $E = (X, \mu, w)$ be a state of $\mathcal{A}_\lambda$, and let $\sigma \in \tsub(E)$.
    If $E$ is a reduced state, then we can take the empty path from $E$ to itself and set $\sigma' = \sigma$.
    Thus, we may assume that~$E$ is not reduced.
    We claim that there then exist a transition $E \xrightarrow{\tau} E'$ in $\mathcal{A}_\lambda$, with $E' = (X', \mu', w')$, and a terminal substitution $\sigma' \in \tsub(E')$ such that $\sigma = \sigma' \circ \tau$ and $\abs{\sigma'(w')} < \abs{\sigma(w)}$ hold.
    Repeatedly following such a transition then yields the desired path to a reduced state.

    Since $w$ is not reduced and $C$ is nonsingular, $w$ contains a nonreduced factor of length two by \cref{lem:syllable-basics} and, since $w$ is freely reduced, at least one symbol occurring in such a factor is a variable.
    In the following, we distinguish several cases, based on the exact form of such a factor.
    In every case, we perform a substitution~$\tau$ such that $\tau(w)$ is not \emph{freely} reduced; hence, $\abs{\sigma'(w')} = \abs{\sigma'(\red(\tau(w)))} < \abs{\sigma'(\tau(w))} = \abs{\sigma(w)}$.

    \begin{enumerate}[label=(\arabic*)]
    \item The word $w$ contains a nonreduced factor $xa$ or $ax$ with $x \in X$ and $a \in A$.
    By symmetry, we may assume a factor of the form $xa$.
    As this factor is not reduced, we must have $\sigma(x) = x' \bar a$ for some $x' \in A^\ast$.
    Suppose first that $x' \neq 1$.
    We then apply the basic substitution $\tau$ defined by $x \mapsto x \bar a$.
    It leads to a new state $E' = (X', \mu', w')$ with $X' = X$ and a new $\sigma' \in \tsub(E')$ with $\sigma = \sigma' \circ \tau$, where $\sigma'(x) = x'$ (and $\sigma'(y) = \sigma(y)$ for all $y \in X \setminus x^G$).

    A routine computation shows that $\lambda(E') \leq \lambda(E)$.
    Indeed, $w' = \tau_\ast(w)$ satisfies \[
    \abs{w'} = \abs{\red(\tau(w))} \leq \abs{\tau(w)} - 2 \leq \abs{w},
    \]
    where the first inequality holds since $\tau(w)$ contains the factor~$\bar a a$, and the second since $w$ is quadratic.
    Moreover, $\abs{X'/G} = \abs{X/G}$ since $X' = X$, and $\syl(w') \leq \syl(w)$ by \cref{lem:syllables}.

    The case $x' = 1$ is handled by applying the substitution $\tau$ defined by $x \mapsto \bar a$, which eliminates the variable $x$; that is, $X' = X \setminus x^G$.
    A similar computation then shows that $\lambda(E') < \lambda(E)$.
    \end{enumerate}
    For the remaining cases, we consider nonreduced factors $xy$ with $x,y \in X$ and distinguish cases according to the possible relations between $x$ and $y$.
    In each case, we exhibit a substitution~$\tau$ and a terminal substitution~$\sigma'$ with $\sigma = \sigma' \circ \tau$ such that $\sigma'(\tau_\ast(xy))$ is freely reduced.
    By construction, this word equals $\red(\sigma(xy))$, which is thus the concatenation of a prefix of~$\sigma(x)$ and a suffix of~$\sigma(y)$.

    If this prefix and this suffix are both nonempty, then $\syl(w') < \syl(w)$ where $w' = \tau_\ast(w)$.
    Indeed, the factor $xy$ crosses two consecutive syllables of $w$, say $u = \tilde u x$ and $v = y \tilde v$.
    Since this prefix and this suffix are nonempty, $\sigma'(\tau_\ast(uv))$ is freely reduced as well, so that $\tau_\ast(uv)$ is reduced.
    Hence, as in the proof of \cref{lem:syllables}, $w'$ is a product of $\syl(w) - 1$ reduced words; that is, the syllables $u$ and $v$ have merged.

    A substitution $\tau$ that decreases the syllable count may increase the number of orbits of variables by at most one and the length of the expression by at most two; this is accounted for in the definition of the weight.
    If the syllable count does not decrease, that is, if $\sigma(x)$ or $\sigma(y)$ cancels entirely, then $\tau$ eliminates that variable and does not increase the length of the expression.
    For each of the cases below, upper bounds on the effect of $\tau$ on these three parameters are collected in \cref{tbl:substitutions}; verifying them is routine, and we leave it to the reader.

    \begin{table}[ht]
      \caption{Substitutions reducing a factor $xy$ with $x,y \in X$.}\label{tbl:substitutions}
      \begin{center}
      \renewcommand{\arraystretch}{1.2}
      \begin{tabular}{*{3}{cwc{5em}}ccc}
        \toprule
        &\multicolumn{3}{c}{Substitution} & \multicolumn{3}{c}{Upper Bounds} \\
        \cmidrule(r){2-4}\cmidrule(l){5-7}
             &$\tau_\ast(xy)$ & $\tau(x)$ & $\tau(y)$ & Syllables & Variables & Length \\
        \midrule

        \multirow{4}{*}{(2)}
            & $x y$ & $x \bar z$ & $z y$ & $-1$ & $+1$ & $+2$\\
            & $x$ & $x \bar z$ & $z$ & $0$ & $0$ & $0$\\
            & $y$ & $\bar z$ & $z y$ & $0$ & $0$ & $0$\\
            & $1$ & $\bar z$ & $z$ & $0$ & $-1$ & $-2$\\

        \midrule

        \multirow{2}{*}{(3)}
            & $x y$ & $x \bar z$ & $z y$ & $-1$ & $+1$ & $0$\\
            & $1$ & $\bar z$ & $z$ & $0$ & $0$ & $-2$\\

        \midrule

        \multirow{2}{*}{(4a)}
            & $z^{\bar \alpha}xy\bar z^\alpha$ & $z^{\bar \alpha}x \bar z$ & $z y \bar z^\alpha$ & $-1$ & $+1$ & $+2$ \\
            & $z^{\bar \alpha}\bar z^\alpha$ & $z^{\bar \alpha} \bar z$ & $z \bar z^\alpha$ & $-1$ & $0$ & $0$ \\

        \cmidrule(lr){2-7}

        \multirow{2}{*}{(4b)}

            & $z^{\bar \alpha} \bar z^\alpha$ & $z^{\bar \alpha} \bar q \bar z$ & $z q \bar z^\alpha$ & $-1$ & $+1$ & $0$ \\
            & $1$ & $\bar q$ & $q$ & $0$ & $0$ & $-2$ \\

        \bottomrule
      \end{tabular}
      \end{center}
    \end{table}

    \begin{enumerate}[label=(\arabic*)]
    \setcounter{enumi}{1}
    \item The word $w$ contains a nonreduced factor $xy$ with $x,y \in X$ and $x^G \neq y^G$.
    As this factor is not reduced, we must have $\sigma(x) = x' \bar z'$ and $\sigma(y) = z' y'$ for some $x', y' \in A^\ast$ and $z' \in A^+$.
    We choose $z'$ maximally, so that $x'y'$ is freely reduced.
    We then introduce a new variable~$z$ with stabilizer $G_z = G_{z'} \cap G_+$.
    Note that $G_x, G_y \subseteq G_z$.

    If $x' \neq 1$ and $y' \neq 1$, then we apply the substitution $\tau$ defined by $x \mapsto x\bar z$ and $y \mapsto zy$.
    This substitution leads to $E' = (X', \mu', w')$, where $X' = X \cup z^G$, and $\sigma' \in \tsub(E')$ with $\sigma = \sigma' \circ \tau$, where $\sigma'(x) = x'$, $\sigma'(y) = y'$, and $\sigma'(z) = z'$.
    If $x' = 1$, then we modify $\tau$ by requiring $x \mapsto \bar z$ and remove $x^G$ from $X'$.
    Likewise, if $y' = 1$, then we modify $\tau$ by requiring $y \mapsto z$ and remove $y^G$ from $X'$.

    \item The word $w$ contains a nonreduced factor $xy$ with $x,y \in X$ and $x^\alpha = y$ for some $\alpha \in G_-$.
    As above, let $\sigma(x) = x' \bar z'$ and $\sigma(y) = z'y'$ with $x',y' \in A^\ast$ and $z' \in A^+$ such that $x'y'$ is freely reduced.
    Let $z$ be a new variable with stabilizer $G_z = G_{z'} \cap G_+$.
    Note that $x^\alpha = y$ implies $x'^\alpha = y'$ as well as $z'^{\iota\alpha} = \bar z'^{\alpha} = z'$ and, hence, $\iota\alpha \in G_z$.

    If $x' \neq 1$ or, equivalently, $y' \neq 1$, then we apply the substitution $\tau$ defined by $x \mapsto x\bar z$, which satisfies $\tau(y) = zy$ by the previous consideration.
    As above, $\tau$ leads to a state $E' = (X', \mu', w')$ and $\sigma' \in \tsub(E')$ with $\sigma = \sigma' \circ \tau$, where $\sigma'(x) = x'$, $\sigma'(y) = y'$, and $\sigma'(z) = z'$.
    If $x' = y' = 1$, then we modify $\tau$ by requiring $x \mapsto \bar z$ and remove $x^G = y^G$ from $X'$.

    \item The word $w$ contains a nonreduced factor $xy$ with $x,y \in X$ and $x^\alpha = y$ for some $\alpha \in G_+$.
    \begin{enumerate}
    \item If $\abs{\red(\sigma(xy))} \geq \tfrac{1}{2}\abs{\sigma(xy)}$, meaning that at most half of $\sigma(xy)$ cancels, then we can write $\sigma(x) = u' x' \bar z'$ and $\sigma(y) = z' y' v'$ for some $x',y' \in A^\ast$ and $u', z', v' \in A^+$ such that, firstly, $\abs{u'} = \abs{z'} = \abs{v'}$ and, secondly, $u'x'y'v'$ is freely reduced.
    As $x^\alpha = y$ with $\alpha \in G_+$, we have $u' = z'^{\bar \alpha}$ and $\bar z'^{\alpha} = v'$.
    Finally, let $z$ be a new variable with $G_z = G_{z'} \cap G_+$.

    If $x' \neq 1$ or, equivalently, $y' \neq 1$, then we apply the substitution $\tau$ defined by $x \mapsto z^{\bar \alpha} x \bar z$, which satisfies $\tau(y) = z y \bar z^\alpha$.
    It leads to a state $E' = (X', \mu', w')$, where $X' = X \cup z^G$, and $\sigma' \in \tsub(E')$ with $\sigma = \sigma' \circ \tau$, where $\sigma'(x) = x'$, $\sigma'(y) = y'$, and $\sigma'(z) = z'$.
    If $x' = y' = 1$, then we modify $\tau$ by requiring $x \mapsto z^{\bar \alpha}\bar z$ and remove $x^G = y^G$ from $X'$.
    \item If $\abs{\red(\sigma(xy))} < \tfrac{1}{2}\abs{\sigma(xy)}$, meaning that more than half of $\sigma(xy)$ cancels, then we can write $\sigma(x) = u' \bar q' \bar z'$ and $\sigma(y) = z' q' v'$ for some $q' \in A^+$ and $u', z', v' \in A^\ast$ such that, firstly, $\abs{u'} = \abs{z'} = \abs{v'}$ and, secondly, $u'v'$ is freely reduced.
    Here, $u' = z'^{\bar \alpha}$ and $\bar z'^{\alpha} = v'$ as before.
    Moreover, we have $q'^{\iota\alpha} = \bar q'^{\alpha} = q'$; in particular, $q'$ is palindromic as $\iota\alpha \in G_-$.
    Let $z$ and $q$ be new variables with stabilizers $G_z = G_{z'} \cap G_+$ and $G_q = G_{q'}$, respectively.

    If $z' \neq 1$, then we apply the substitution $\tau$ defined by $x \mapsto z^{\bar \alpha}\bar q \bar z$, which satisfies $\tau(y) = zq\bar z^\alpha$.
    The substitution $\tau$ leads to a state $E' = (X', \mu', w')$, where $X' = (X \cup q^G \cup z^G) \setminus x^G$, and $\sigma' \in \tsub(E')$ with $\sigma = \sigma' \circ \tau$, where $\sigma'(q) = q'$ and $\sigma'(z) = z'$.
    Note that $E'$ is indeed a state, since the palindromic variable $q$ does not occur in $w'$.
    If $z' = 1$, then we modify $\tau$ by requiring $x \mapsto \bar q$ and remove $z^G$ from $X'$.
    \end{enumerate}
    \end{enumerate}
    In each case, $E'$ is indeed a state: $C'$ is feasible as $\sigma' \in \tsub(C')$, nonsingular as all values of $\sigma'$ are nonempty, and $w'$ is quadratic, as is easily checked.
    The case distinction above covers all possible nonreduced factors of length two, and a direct inspection of the upper bounds in \cref{tbl:substitutions} reveals that $\lambda(E') \leq \lambda(E)$ holds in each case.
\end{proof}

\section{Palindrome Length in Free Groups}\label{sec:palindrome-length}

Applied to palindrome length in free groups, as defined in \cref{sec:introduction}, \cref{thm:parametric} yields a parametric description of the freely reduced words of bounded palindrome length.

\begin{corollary}\label{cor:palindrome-length-forms}
  For every $n \geq 0$, one can compute reduced states $E_1, \dotsc, E_k$ with $E_i = (C_i, w_i)$ such that the $\sigma(w_i)$ with $\sigma \in \tsub(E_i)$ are precisely the words $w \in \red(A^\ast)$ with $\pl(w) \leq n$.
\end{corollary}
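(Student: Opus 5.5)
The plan is to apply \cref{thm:parametric} to the quadratic expression $x_1 \cdots x_n$ from \cref{exm:palindrome-expression}, once for each of its finitely many contexts $(X_n, \mu)$. We work with $G = \{1, \iota, \rho, \iota\rho\}$ as in \cref{exm:palindrome-symmetries} and take $\mu_\bot = \mu_A$, or any refinement of it; no further constraints are needed. The variables $x_i$ are fixed by $\rho$ and form the $G$-set $X_n$.

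First, the contexts $C = (X_n, \mu)$ are enumerated. They correspond to the $G$-homomorphisms $\mu \colon (A \cup X_n)^\ast \to N$ extending $\mu_\bot$, and each is determined by choosing $\mu(x_i) \in N$ subject to $G_{x_i} \subseteq G_{\mu(x_i)}$ and $(\nu \circ \mu)(x_i) \neq 0$. There are finitely many such choices. By the footnote in \cref{sec:expressions}, the infeasible ones can be discarded effectively. By \cref{exm:palindrome-quadratic}, the expression $w = x_1 \cdots x_n$ is quadratic over every such $C$.

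Next, we check that the values of $w$ over these contexts are exactly the reduced words of palindrome length at most $n$. A terminal substitution $\sigma \in \tsub(C)$ is a $G$-equivariant assignment. Since $x_i^\rho = x_i$, each $\sigma(x_i)$ is a freely reduced palindrome, and so $\sigma_\ast(w) = \red(\sigma(x_1) \cdots \sigma(x_n))$ has $\pl \leq n$. Conversely, suppose $u \in \red(A^\ast)$ satisfies $u \equiv p_1 \cdots p_n$ with palindromes $p_i$. Each $\red(p_i)$ is again a palindrome, because $\red$ commutes with $\rho$. We therefore set $\sigma(x_i) = \red(p_i)$, extend this equivariantly via $\sigma(\bar x_i) = \overline{\red(p_i)}$, and choose $\mu(x_i) = \mu_\bot(\red(p_i))$. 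This yields a context together with some $\sigma \in \tsub(C)$ such that $\sigma_\ast(w) = u$. If $\pl(u) < n$, we pad the product with empty palindromes; this is permitted, since values may be empty and the corresponding singular variables are simply eliminated later. This step is essentially \cref{exm:palindrome-equation}.

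Finally, we apply \cref{thm:parametric} to each feasible context $C$. This gives computable reduced states $(C_i, w_i)$ and substitutions $\tau_i$ whose value sets $\{\sigma'(w_i) : \sigma' \in \tsub(C_i)\}$ together form exactly $\{\sigma_\ast(w) : \sigma \in \tsub(C)\}$. Taking the union over all feasible $C$ of the resulting lists gives $E_1, \dotsc, E_k$, and by the previous step the union of their value sets is $\{u \in \red(A^\ast) : \pl(u) \leq n\}$. The case $n = 0$ is trivial: the expression is empty, and its only state is terminal with value $1$.

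There is no substantial obstacle here. The only point requiring care is to confirm that the equivariance conditions on $\mu$ and $\sigma$ really encode "$\sigma(x_i)$ is a freely reduced palindrome". This needs $G_{x_i} = \{1, \rho\}$ and the compatibility $\mu(x_i)^\rho = \mu(x_i)$, which holds for $\mu = \mu_\bot \circ \sigma$ because $\mu_\bot$ is a $G$-homomorphism.
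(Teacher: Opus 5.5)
Your proposal is correct and matches the paper's proof: the paper also uses \cref{exm:palindrome-expression,exm:palindrome-equation} to identify the freely reduced words of palindrome length at most $n$ with the values of the quadratic expression $x_1 \cdots x_n$ over the contexts $(X_n, \mu)$, and then applies \cref{thm:parametric} to each feasible context, with $\lambda = 7n + 2n = 9n$. Your explicit check of the value-set equivalence, including padding with empty palindromes and the stabilizer condition $G_{x_i} = \{1, \rho\} \subseteq G_{\mu(x_i)}$, spells out what the paper leaves to those examples.
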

\begin{proof}
  By \cref{exm:palindrome-expression,exm:palindrome-equation}, the freely reduced words $w$ with $\pl(w) \leq n$ are exactly the values of the quadratic expression $x_1 \cdots x_n$ over the contexts $(X_n, \mu)$ from \cref{exm:palindrome-expression}.
  Applying \cref{thm:parametric} to each of the feasible ones, with $\lambda = 9n$, yields the list.
\end{proof}

\begin{remark}\label{rem:number-of-states}
The length $k$ of the list in \cref{cor:palindrome-length-forms} is at most $2^{\mathcal{O}(n \log n)}$.
On the other hand, our results below imply that, for every $n \geq 1$ and over a sufficiently large alphabet $A$, we must have $k \geq \frac{1}{n}\tbinom{2n - 2}{n - 1} \in \Theta(4^n \cdot n^{-3/2})$.
\end{remark}

To investigate the structure of freely reduced words of bounded palindrome length, we have implemented the procedure from the proof of \cref{thm:quadratic}.
For simplicity, and to obtain results independent of the alphabet $A$, our implementation does not work with rational constraints; instead, it tracks the partition into syllables directly.
For $n = 2, 3, 4, 5, 6$, it yields lists of $26$, $251$, $3054$, $42269$, and $629148$ reduced expressions, respectively.

These lists contain a great deal of redundancy.
Splitting a palindromic variable as in \cref{lem:splitting} distinguishes whether its value has even or odd length, and eliminating singular variables leads to expressions that are merely special cases of others.
After we unified all expressions into their most general forms, exactly $1$, $2$, $5$, $14$, and $42$ forms remained for $n = 2, 3, 4, 5, 6$, respectively.
Moreover, a clear structure emerged for these expressions, which we describe in this section.

In \cref{sec:patterns}, we introduce patterns and state our characterization (\cref{thm:palindrome-length}).
Its proof, given in \cref{sec:pattern-reduction}, is direct but follows the idea of symbolic reduction.
In \cref{sec:palindrome-language}, we finally turn this characterization into context-free grammars (\cref{lem:palindrome-length-grammar,pro:palindrome-length-language}) and prove that $\pl$ can be computed in \LOGCFL\ (\cref{thm:palindrome-length-compute}).

\subsection{Patterns}\label{sec:patterns}

Throughout this section, the group $G = \{1, \iota, \rho, \iota\rho\}$ is as in \cref{exm:palindrome-symmetries}, and $A$ is a fixed $G$-set on which $\rho$ acts trivially; we impose no restriction on the action of $\iota$.

The freely reduced words $w \in A^\ast$ of palindrome length $\pl(w) \leq n$ are best described in terms of (\emph{complete}) \emph{patterns}, meaning words over $\{\,\mathord{\langle}\,, \,\mathord{\rangle}\,, \palin\}$ generated by the context-free grammar with the single nonterminal $Q$ and the productions
\[
  Q \to \langle\,Q\,Q\,\rangle \mathbin{\vert} \palin.
\]

The \emph{rank} $\mathrm{rk}(u)$ of a pattern $u$ is the number of occurrences of $\palin$ in $u$.
As the grammar above is unambiguous, the complete patterns of rank $n \geq 1$ correspond to full binary trees with $n$ leaves; hence, they are counted by the Catalan number $C_{n-1} = \tfrac{1}{n}\tbinom{2n-2}{n-1}$.
\begin{example}
The two complete patterns of rank three are
\[
  \langle\langle \palin \palin \rangle \palin \rangle
  \quad\text{and}\quad
  \langle \palin \langle \palin \palin \rangle\rangle.
\]
\end{example}

Intuitively, each occurrence of $\palin$ in a pattern stands for a palindrome, and each matching pair of brackets for a twisted conjugation $y^{\alpha_k} v y$ of the enclosed factor~$v$ with twist $\alpha_k \coloneqq (\iota\rho)^k\iota$, where $k \geq 1$ is the rank of $v$; thus, $\alpha_k = \iota$ if $k$ is even, and $\alpha_k = \rho$ if $k$ is odd.
We make this precise by associating an expression $w_u$ in the sense of \cref{def:expression} with each pattern $u$.

\begin{definition}\label{def:pattern}
  For a pattern $u$, the expression $w_u$ is obtained from $u$ by replacing each occurrence of $\palin$ by a new \emph{palindrome variable} $x$, and the brackets ${\langle}$ and $\,{\rangle}$ of each matching pair by $y^{\alpha_k}$ and $y$, respectively, for a new \emph{bracket variable} $y$, where $k$ is the rank of the factor that the pair encloses.

  Here, each palindrome variable $x$ has stabilizer $G_x = \{1, \rho\}$ and each bracket variable $y$ has stabilizer $G_y = \{1\}$.
  We denote the union of their orbits by $X_u$.
\end{definition}

\begin{example}
With palindrome variables $x_1, x_2, x_3$ and bracket variables $y_1, y_2$, we obtain
\[
  w_{\langle\langle \palin \palin \rangle \palin \rangle} = y_1^\rho y_2^\iota x_1 x_2 y_2 x_3 y_1
  \quad\text{and}\quad
  w_{\langle \palin \langle \palin \palin \rangle\rangle} = y_1^\rho x_1 y_2^\iota x_2 x_3 y_2 y_1.
\]
\end{example}

Clearly, each $w_u$ is a quadratic expression.

\begin{definition}\label{def:instance}
  An \emph{instantiation} of a pattern $u$ is a $G$-equivariant map $\sigma$ assigning freely reduced words to the variables in $X_u$; the freely reduced word~$\sigma_\ast(w_u)$ is the \emph{instance} of $u$ under $\sigma$.
  We call $\sigma$ \emph{reduced} if $\sigma(w_u) = \sigma_\ast(w_u)$; in this case, $\sigma_\ast(w_u)$ is a \emph{reduced instance}.
\end{definition}

Instantiations of $u$ are exactly the terminal substitutions from the contexts $(X_u, \mu)$, namely with $\mu = \mu_\bot \circ \sigma$ (where, e.g., $\mu_\bot = \mu_A$); in particular, $\sigma$ is reduced if and only if $w_u$ is reduced over $(X_u, \mu_\bot \circ \sigma)$, and a variable is singular if and only if its value is empty.

Our main application, proved in \cref{sec:pattern-reduction}, characterizes palindrome length in free groups in terms of complete patterns.

\begin{theorem}\label{thm:palindrome-length}
  Let $n \geq 1$, and let $w \in A^\ast$ be a freely reduced word.
  Then $\pl(w) \leq n$ if and only if $w$ is a reduced instance of a complete pattern of rank~$n$.
\end{theorem}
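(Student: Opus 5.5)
The plan is to prove the two directions separately. The \emph{if} direction is a direct computation. The \emph{only if} direction reduces to a normalization lemma, whose proof imitates the symbolic reduction from the proof of \cref{thm:quadratic}.

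\emph{If.} By induction on the complete pattern $u$, I will show that every instance $\sigma_\ast(w_u)$ equals, in $F(A)$, a product of $\mathrm{rk}(u)$ palindromes.
\begin{itemize}
\item For $u = \palin$ this is immediate.
\item For $u = \langle u_1 u_2\rangle$ of rank $k$, induction gives that $\sigma(w_{u_1} w_{u_2})$ is congruent to a product $p_1 \cdots p_k$ of palindromes, and it remains to treat the twisted conjugation by $y$. Write $g = \sigma(y)$.
\item If $k$ is odd, then $\alpha_k = \rho$, and I insert cancelling pairs to obtain
\[
  g^\rho p_1 \cdots p_k g \equiv (g^\rho p_1 g)(\bar g p_2 \bar g^\rho)(g^\rho p_3 g)\cdots(g^\rho p_k g).
\]
Each factor is a palindrome, since $\rho$ is an antimorphism fixing $p_i$.
\item If $k$ is even, then $\alpha_k = \iota$, so the bracket is ordinary conjugation by $g$. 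Here I use
\[
  \bar g p_1 \cdots p_k g \equiv (\bar g p_1 \bar g^\rho)(g^\rho p_2 g)(\bar g p_3 \bar g^\rho)\cdots(g^\rho p_k g).
\]
\end{itemize}
This explains the parity of the twists. It gives $\pl(w) \leq n$ for every instance of a rank-$n$ pattern, and in particular for every reduced instance.

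\emph{Only if.} Suppose $w \equiv p_1 \cdots p_n$ with palindromes $p_i$ (empty palindromes are allowed). Take the left comb pattern $u = \langle\cdots\langle\langle\palin\palin\rangle\palin\rangle\cdots\palin\rangle$ of rank $n$, assign $p_i$ to the palindrome variables and the empty word to every bracket variable. Then $w = \sigma_\ast(w_u)$ is an instance, though typically not a reduced one. So it suffices to prove the following normalization lemma: \emph{every instance of a complete pattern of rank $n$ is a reduced instance of some complete pattern of rank $n$.} I will prove it by induction on $\abs{\sigma(w_u)}$, in the spirit of the proof of \cref{thm:quadratic}.
\begin{itemize}
\item If $\sigma$ is not reduced, the argument of \cref{lem:syllable-basics} gives an adjacent pair of variable occurrences in $w_u$ whose values cancel. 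Such a pair has the form bracket–bracket, bracket–palindrome, palindrome–bracket, or palindrome–palindrome.
\item In each case I will rewrite $(u, \sigma)$ to $(u', \sigma')$ with the same rank, the same instance, and $\abs{\sigma'(w_{u'})} < \abs{\sigma(w_u)}$.
\item The basic tools are: shortening a bracket value, i.e.\ substituting $y \mapsto z y$ and absorbing the cancelled part; splitting a palindrome as $x \mapsto z\, x'\, z^\rho$, with the outer parts moved into a neighbouring bracket; and, when a value cancels completely, deleting the variable.
\item Two cancelling palindromes $x_1 x_2$ with $\sigma(x_1) = s\bar z'$, $\sigma(x_2) = z' t$ should be handled like case~(4b) of the proof of \cref{thm:quadratic}. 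The common part becomes a new bracket of even rank (twist $\iota$), namely $\langle \palin\palin\rangle$ with bracket value $z'$. Since $\rho$ fixes palindromes, $\bar z'$ must be the $\rho$-image of a prefix of $\sigma(x_1)$, which is what makes this rewriting possible.
\end{itemize}

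The main obstacle is the cancellations between a bracket and an adjacent subpattern across tree boundaries. A typical case is the right bracket $y$ of $\langle u_1 u_2\rangle$ cancelling against the first variable of a following sibling $u_3$. Here the cancelled portion has to be moved inside a neighbouring subtree, which forces a rotation such as $\langle\langle u_1 u_2\rangle u_3\rangle \leftrightarrow \langle u_1\langle u_2 u_3\rangle\rangle$. The difficulty is that rotations change the ranks of the enclosed factors, and therefore the twists $\alpha_k$, so I need the identity $y^{\alpha_k} = (y^{\iota\rho})^{\alpha_{k'}}$-type corrections. My first attempt will be to replace $y$ by $y^{\iota\rho}$, or by a conjugate of it, when parities switch. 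I expect the core of the proof to be checking in each such case that rank parities and the equation $\sigma(w_u) \equiv \sigma'(w_{u'})$ match up, and that the total length strictly decreases.
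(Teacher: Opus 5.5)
Your \emph{if} direction is correct; it is the same telescoping identity the paper uses in \cref{lem:pattern-instances}. The \emph{only if} direction has a genuine gap. Everything rests on your normalization lemma, which is only sketched, and the sketch breaks at two points.

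First, the adjacency step fails. \Cref{lem:syllable-basics} produces an adjacent cancelling pair only in a nonsingular context, but your starting instantiation maps every bracket variable to the empty word. Take $n = 3$, the comb $\langle\langle\palin\palin\rangle\palin\rangle$, and $p_1 = b$, $p_2 = a$, $p_3 = \bar a$ with $b \notin \{a, \bar a\}$. The expression is $y_1^\rho y_2^\iota x_1 x_2 y_2 x_3 y_1$. The only cancellation is between $x_2$ and $x_3$, which are separated by the empty occurrence $y_2$ and are not siblings. A complete pattern cannot simply drop an empty bracket: your tool of deleting a variable whose value has cancelled leaves the class of complete patterns. Since cancellation keeps emptying palindromes and brackets, this problem recurs throughout the induction. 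The paper avoids it by working with partial patterns, generated by $Q \to \langle QQ\rangle \mid QQ \mid \palin$, throughout the reduction. It
\begin{itemize}
\item deletes brackets with empty value;
\item moves empty palindrome variables past neighbouring atoms;
\item handles the leftover configuration $\langle\palin^k\rangle$ according to the parity of $k$;
\item passes to a complete pattern only at the very end, by inserting empty brackets (\cref{rem:pattern-completion}).
\end{itemize}

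Second, your plan for cancellations across subtree boundaries points in the wrong direction. Once the bracket value is nonempty, a rotation $\langle\langle u_1u_2\rangle u_3\rangle \leftrightarrow \langle u_1\langle u_2u_3\rangle\rangle$ does not preserve the instance. Re-twisting the bracket variable cannot repair this, because the two complete patterns of rank three already have different generic instances. No rotations are needed. The paper cancels one letter at a time (\cref{tbl:bracket-cases}):
\begin{itemize}
\item it records the cancelled letter as the value of a new bracket variable, placed either around the two atoms involved or around the remaining content of the bracket involved;
\item when the palindrome involved is a single letter, it merely rearranges the pattern instead;
\item the required twists follow from $\alpha_i\alpha_j = (\iota\rho)^{i+j}$.
\end{itemize}
Whenever the new bracket is not allowed by the grammar, \cref{lem:bracket-wrap} absorbs its value into the enclosing bracket variable or into the enclosed atom, without increasing the length. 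In your typical case, $\langle u_1u_2\rangle$ and $u_3$ are the two children of an enclosing bracket, whose variable simply absorbs the cancelled letter; if $u_3$ is a one-letter palindrome, the two children are swapped instead. This absorption mechanism, together with the passage to partial patterns, is the missing idea.
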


Note that this characterization contains no redundancies, as distinct complete patterns describe different forms.
Indeed, let $u$ be a complete pattern of rank~$n$, and let $\sigma$ be the generic instantiation of $u$ mapping the variables of $w_u$ to new letters, using distinct orbits of letters for distinct orbits of variables.
Further, suppose that $\sigma(w_u) = \sigma'(w_{u'})$ for some reduced instantiation $\sigma'$ of a complete pattern $u'$ of rank~$n$.
Since $\sigma(w_u)$ has no palindrome factors of length greater than one, the palindrome variables of $u'$ take exactly the values of the palindrome variables of $u$, which are the letters whose orbits occur only once in $\sigma(w_u)$.
If a bracket variable of $u'$ took a value of length $m \geq 2$, then $u$ would contain $m$ pairs of brackets whose opening brackets are consecutive and whose closing brackets are consecutive, which is clearly impossible.
Hence, by counting, every bracket variable of $u'$ takes a single letter as its value, so that $u'$ can be read off from $\sigma'(w_{u'}) = \sigma(w_u)$.

A similar argument justifies the lower bound stated in \cref{rem:number-of-states}: the generic instance of each complete pattern $u$ of rank~$n$ is a value of some state $E_i = (C_i, w_i)$, and since $w_i$ contains at most $n$ constants, whereas every orbit of variables occurring in $w_i$ occurs twice and takes nonempty values, $u$ can again be read off from $w_i$.

\subsection{Reducing Patterns}\label{sec:pattern-reduction}

We prove \cref{thm:palindrome-length} directly, but following the idea of symbolic reduction: starting from a factorization into $n$ palindromes, we cancel one pair of letters at a time while maintaining an instantiation of a pattern of rank~$n$.
Since brackets are introduced only where letters cancel, the intermediate patterns need not be complete patterns; instead, they are \emph{partial patterns}, that is, words generated by the grammar with the single nonterminal $Q$ and the productions
\[
  Q \to \langle\,Q\,Q\,\rangle \mathbin{\vert} Q\,Q \mathbin{\vert} \palin.
\]

The definitions from \cref{sec:patterns} carry over verbatim to partial patterns.
A \emph{subpattern} of a partial pattern is a factor that is itself a partial pattern.

\begin{remark}\label{rem:pattern-completion}
  As is evident from their grammars, every partial pattern $u$ arises from a complete one, $\hat u$ say, of the same rank by deleting pairs of brackets.
  Mapping their bracket variables to the empty word shows that every reduced instance of $u$ is also one of $\hat u$.
\end{remark}

The following lemma justifies the interpretation of patterns from \cref{sec:patterns}.

\begin{lemma}\label{lem:pattern-instances}
  Let $n \geq 1$ and $w \in \red(A^\ast)$.
  Then the following are equivalent.
  \begin{enumerate}[label=(\arabic*)]
    \item\label{lem:pattern-instances-1} The word $w$ satisfies $\pl(w) \leq n$.
    \item\label{lem:pattern-instances-2} The word $w$ is an instance of the partial pattern $\palin^n$.
    \item\label{lem:pattern-instances-3} The word $w$ is an instance of some partial pattern of rank $n$.
  \end{enumerate}
\end{lemma}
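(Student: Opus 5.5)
We prove the cycle of implications \ref{lem:pattern-instances-1}$\Rightarrow$\ref{lem:pattern-instances-2}$\Rightarrow$\ref{lem:pattern-instances-3}$\Rightarrow$\ref{lem:pattern-instances-1}. Note that instances are not required to be reduced here; the lemma only concerns the values $\sigma_\ast(w_u)$ of the expressions $w_u$.

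For \ref{lem:pattern-instances-1}$\Rightarrow$\ref{lem:pattern-instances-2}, first observe that $w_{\palin^n} = x_1 \cdots x_n$ with palindrome variables $x_i$ of stabilizer $\{1,\rho\}$. A $G$-equivariant assignment of freely reduced words to these variables is exactly a choice of freely reduced palindromes $\sigma(x_i)$. Here $\rho$ acts trivially on $A$, so $\rho$ is reversal, and $\sigma(\bar x_i) = \overline{\sigma(x_i)}$.

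Now suppose $w \equiv w_1 \cdots w_n$ with palindromes $w_i$, if necessary padding with empty palindromes when fewer than $n$ are used. Each $w_i$ may be taken freely reduced, since the free reduction of a palindrome is a palindrome: $\red(v^\rho) = \red(v)^\rho$. Hence $w = \red(w_1\cdots w_n) = \sigma_\ast(w_{\palin^n})$.

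The implication \ref{lem:pattern-instances-2}$\Rightarrow$\ref{lem:pattern-instances-3} is trivial, since $\palin^n$ is a partial pattern of rank $n$ (generated via $Q \to QQ$).

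For \ref{lem:pattern-instances-3}$\Rightarrow$\ref{lem:pattern-instances-1}, the key step is to show, by induction along the grammar, that for every partial pattern $v$ of rank $k$ and every instantiation $\sigma$, the element $g = \pi_A(\sigma(w_v))$ is a product of $k$ palindromes. We also need a parity statement to carry the twist through brackets. The claim we aim for is: $g$ is a product of $k$ palindromes, and moreover $g^{\alpha_k} = g$ in $F(A)$ in the appropriate sense, namely that products of $k$ palindromes are fixed by the twist $\alpha_k$ up to the correct conjugation.

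Concretely, for palindromes $p_1,\dots,p_k$ we have $(p_1\cdots p_k)^\rho = p_k \cdots p_1$ and $(p_1\cdots p_k)^\iota = \bar p_k \cdots \bar p_1$. Rather than proving an invariance statement, the cleaner route is to show directly the bracket case: if $g = p_1\cdots p_k$ and $y \in F(A)$, then $y^{\alpha_k} g y$ is a product of $k$ palindromes.

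For $k$ odd, $\alpha_k = \rho$. We write
\[
y^\rho p_1 \cdots p_k y = (y^\rho p_1 y)(\bar y p_2 \bar y^\rho)(y^\rho p_3 y)\cdots(y^\rho p_k y),
\]
alternating the conjugators. Both $y^\rho p y$ and $\bar y p \bar y^\rho$ are palindromes, since reversal is an anti-automorphism fixing $p$; for instance $(y^\rho p y)^\rho = y^\rho p y$. The interior factors cancel as $y\bar y = 1$ and $\bar y^\rho y^\rho = 1$, and since $k$ is odd the last factor is $y^\rho p_k y$.

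For $k$ even, $\alpha_k = \iota$ and $y^\iota = \bar y$. The same alternation with $k$ even gives
\[
(\bar y p_1 \bar y^\rho)(y^\rho p_2 y)\cdots(y^\rho p_k y) = \bar y\, p_1\cdots p_k\, y.
\]
So the parity of the rank exactly matches the alternation, which explains the choice $\alpha_k = (\iota\rho)^k\iota$.

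The concatenation production $Q \to QQ$ adds ranks. The production $Q \to \langle QQ\rangle$ gives a product of $k_1 + k_2$ palindromes, to which the bracket step applies with $k = k_1+k_2$, the rank of the enclosed factor. The leaf case $\palin$ is a single palindrome, so the induction closes.

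Finally, since $\sigma_\ast(w_u) = \red(\sigma(w_u))$ represents this product, we get $\pl(w) \leq n$. The main obstacle is this bracket computation, specifically checking that the alternating conjugators telescope and match the parity of $k$; everything else is bookkeeping.
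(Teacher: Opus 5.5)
Your proof is correct and is essentially the paper's: it uses the same cycle of implications and the same appeal to the $G$-equivariance of $\red$ for (1)$\Rightarrow$(2). For the bracket step it uses the same telescoping product of palindromes $h^{\beta} g_i h^{\beta\rho}$ with $\beta$ alternating between $\rho$ and $\iota$, which the paper writes as one formula via $\alpha_i\rho = \alpha_{i-1}\iota$ rather than splitting by the parity of $k$ (the tentative ``invariance'' claim you float beforehand is never used and can simply be deleted).
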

\begin{proof}
  Clearly, \ref{lem:pattern-instances-2} implies \ref{lem:pattern-instances-3}.

  \smallskip

  \textit{Ad \ref{lem:pattern-instances-1} $\Rightarrow$ \ref{lem:pattern-instances-2}.}
  By assumption, and since $\red$ is $G$-equivariant, we have $w \equiv w_1 \cdots w_n$ for palindromes $w_1, \dotsc, w_n \in \red(A^\ast)$.
  Thus, $w$ is the instance of $\palin^n$ under $x_i \mapsto w_i$, mapping the variables of $w_{\palin^n} = x_1 \cdots x_n$ to these palindromes $w_1, \dotsc, w_n$.

  \smallskip

  \textit{Ad \ref{lem:pattern-instances-3} $\Rightarrow$ \ref{lem:pattern-instances-1}.}
  We show, by induction on $u$, that $\pi_A(\sigma(w_u))$ is a product of $\mathrm{rk}(u)$ palindromes for all instantiations $\sigma$ of a partial pattern $u$.
  For $u = \palin$ and for products $u = u_1 u_2$, this is obvious.
  For $u = \langle v \rangle$ with $k = \mathrm{rk}(u)$, it suffices to show that $h^{\alpha_k} g_1 \cdots g_k h$ is a product of $k$ palindromes for all $h \in F(A)$ and palindromes $g_1, \dotsc, g_k \in F(A)$.
  Since $\alpha_{i} \rho = (\iota\rho)^{i+1} = (\iota\rho)^{i-1}$ for all $i \geq 1$, which is $1$ for $i = 1$ and $\alpha_{i-1} \iota$ for $i \geq 2$, we have
  \[
    h^{\alpha_k} g_1 \cdots g_k h =
    \big(h^{\alpha_k} g_1 h^{\alpha_k \rho}\big) \cdots \big(h^{\alpha_1}g_k h^{\alpha_1 \rho}\big),
  \]
  where the $i$-th factor $h^{\alpha_{k-i+1}} g_i h^{\alpha_{k-i+1} \rho}$ is clearly a palindrome.
\end{proof}

By \cref{rem:pattern-completion,lem:pattern-instances}, it remains to show that every $w \in \red(A^\ast)$ with $\pl(w) \leq n$ is a reduced instance of some partial pattern of rank $n$.
We measure progress towards this goal by the \emph{length} of an instantiation $\sigma$ of a partial pattern $u$, which we define as $\abs{\sigma(w_u)}$.

The following lemma, which we need at the end of the proof of \cref{lem:pattern-reduction}, shows that adding a pair of brackets around a subpattern is harmless: either the result is again a partial pattern, or the new pair of brackets can be absorbed.

\begin{lemma}\label{lem:bracket-wrap}
  Let $u = u_1 \, v\, u_2$ be a partial pattern with a subpattern $v$.
  Then either $u' = u_1 \, \langle v \rangle \, u_2$ is a partial pattern or, for every instantiation $\sigma'$ of $u'$, there is an instantiation $\sigma$ of $u$ with
  \[
    \sigma_\ast(w_u) = \sigma'_\ast(w_{u'})
    \quad\text{and}\quad
    \abs{\sigma(w_u)} \leq \abs{\sigma'(w_{u'})}.\footnotemark
  \]
\end{lemma}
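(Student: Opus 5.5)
The plan is to first pin down exactly when $u' = u_1\,\langle v\rangle\,u_2$ fails to be a partial pattern, and then, in each failing case, absorb the new bracket pair into an existing variable of $u$. Since $v$ is a factor of $u$ that is itself a partial pattern, its brackets are matched inside $v$. Hence $v$ is a sequence $c_1 \cdots c_m$, $m \geq 1$, of consecutive top-level components at one nesting level of $u$. Each component is either $\palin$ or a bracketed block $\langle\cdots\rangle$. By the grammar $Q \to \langle\,Q\,Q\,\rangle \mid Q\,Q \mid \palin$, the content of a bracket pair is a sequence of at least two components, and any sequence of at least one component can be reassociated via $Q \to Q\,Q$. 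So I will check that $u'$ is a partial pattern whenever $m \geq 2$ and $v$ is not the entire content of an enclosing bracket pair of $u$. In that situation the components $c_1, \dotsc, c_m$ can be grouped into a single $Q$ split as $Q\,Q$, which is then wrapped in $\langle\,\cdot\,\rangle$; the enclosing level still has at least two components. The remaining, failing cases are therefore: (a) $v = \palin$; (b) $v = \langle v_0\rangle$ is a single bracketed block; (c) $u_1$ ends with $\langle$ and $u_2$ begins with the matching $\rangle$, so that $v$ is the full content of a bracket pair. Otherwise $\langle\langle v\rangle\rangle$ would occur, which the grammar forbids.

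Next I will handle each failing case by writing down $\sigma$ explicitly from $\sigma'$. Let $y$ denote the new bracket variable in $w_{u'}$ and $k = \mathrm{rk}(v)$. In all cases every variable not involved keeps its $\sigma'$-value.

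In case (a), $k=1$ and $\alpha_1 = \rho$, so the factor of $w_{u'}$ is $y^\rho x y$. I set $\sigma(x) \coloneqq \red(\sigma'(y)^\rho\,\sigma'(x)\,\sigma'(y))$. This is a palindrome because $\rho$ is an anti-automorphism fixing $\sigma'(x)$; hence it is compatible with the stabilizer $G_x = \{1,\rho\}$, and $\sigma$ extends equivariantly.

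In case (b), $v_0$ also has rank $k$, so the factor reads $y^{\alpha_k} z^{\alpha_k} w_{v_0} z\, y$, where $z$ is the bracket variable of $v$. Since $\alpha_k \in G_-$, we have $y^{\alpha_k} z^{\alpha_k} = (zy)^{\alpha_k}$. I therefore set $\sigma(z) \coloneqq \red(\sigma'(z)\sigma'(y))$, which is allowed because bracket variables have trivial stabilizer.

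Case (c) is symmetric to (b): with $z$ the enclosing bracket variable, the factor is $z^{\alpha_k} y^{\alpha_k} w_v\, y\, z = (yz)^{\alpha_k} w_v (yz)$, and I set $\sigma(z) \coloneqq \red(\sigma'(y)\sigma'(z))$.

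In all three cases $\sigma(w_u)$ arises from $\sigma'(w_{u'})$ by replacing a factor by a word $\equiv$-equivalent to it. This gives $\sigma_\ast(w_u) = \sigma'_\ast(w_{u'})$. Moreover, since $\abs{\red(t)} \leq \abs{t}$ and $\abs{s^{\alpha}} = \abs{s}$, the length does not grow, i.e., $\abs{\sigma(w_u)} \leq \abs{\sigma'(w_{u'})}$.

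The step I expect to need the most care is the first one: the combinatorial claim that these three cases are the only obstructions. In particular, I must check that when $v$ consists of at least two top-level components strictly inside a larger sequence, the ambiguity of the rule $Q \to Q\,Q$ really allows $v$ to be regrouped as one nonterminal. I would prove this by a short induction on the derivation of $u$, noting that any contiguous run of sibling components can be derived as a single $Q$. The case analysis afterwards is routine, using only that $\alpha_k$ is negative, so that $y^{\alpha_k} z^{\alpha_k} = (zy)^{\alpha_k}$, and that the rank of $v$ equals the rank of $v_0$ or of the enclosing content.
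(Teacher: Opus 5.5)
Your proposal is correct and follows the paper's proof. You identify the same three obstructions: $v = \palin$, $v = \langle v_0\rangle$, and $v$ being the full content of an enclosing bracket pair. In each case you absorb the new bracket variable into the adjacent palindrome or bracket variable. Your explicit formulas, e.g.\ $\sigma(z) = \red(\sigma'(z)\sigma'(y))$ via $y^{\alpha_k} z^{\alpha_k} = (zy)^{\alpha_k}$, spell out the absorption step that the paper leaves to the reader.
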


\begin{proof}
  \footnotetext{Here, expressions for and instantiations of $u'$ are defined as in \cref{sec:patterns}.}
  Note that $u' = u_1 \,\langle v \rangle\, u_2$ is a partial pattern unless $u$ has one of the following forms:
  \[
    u = u_1' \langle\, v \,\rangle u_2'
    \quad\text{or}\quad
    u = u_1 \,\langle v' \rangle\, u_2
    \quad\text{or}\quad
    u = u_1 \,\palin\, u_2.
  \]
  In each of these cases, the bracket or palindrome variable corresponding to the displayed symbols can absorb the value of the new bracket variable of $u'$.
\end{proof}

\begin{lemma}\label{lem:pattern-reduction}
  If $w \in A^\ast$ is the instance of a partial pattern $u$ under a nonreduced instantiation, then it is also the instance of a partial pattern of the same rank under a shorter instantiation.
\end{lemma}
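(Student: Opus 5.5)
Since $\sigma$ is not reduced, $\sigma(w_u)$ contains a factor $a\bar a$. No value $\sigma(z)$ is itself cancelling, so this factor straddles a boundary between the values of two consecutive occurrences $s,t$ of variables in $w_u$; empty values can be skipped, because their variables are simply dropped. Thus $\sigma(s)$ is nonempty and ends with $a$, and $\sigma(t)$ is nonempty and begins with $\bar a$. The plan is a case analysis on the kind of adjacent symbols $st$ in $w_u$, read off from the local shape of $u$:
\begin{itemize}
  \item two palindromes $\palin\palin$;
  \item a palindrome next to a bracket, such as $\palin\rangle$ or $\langle\palin$;
  \item two brackets, such as $\rangle\rangle$, $\langle\langle$, $\rangle\langle$, $\langle\rangle$, or $\rangle\palin$;
  \item and so on for the remaining combinations.
\end{itemize}
In each case I will modify $u$ locally and define a new instantiation $\sigma'$. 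The instance must be unchanged and the length must drop by at least two: strip the cancelling letter $a$ off $s$ and $\bar a$ off $t$, and if necessary record the stripped letters in a new bracket variable. This mirrors cases (2)--(4) in the proof of \cref{thm:quadratic}.

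\textbf{Two palindrome variables $x_1x_2$.} Here $\sigma(x_1)=a\,p_1\,a$ and $\sigma(x_2)=\bar a\,p_2\,\bar a$ (if a value is the single letter $a$, then $p_1$ is empty and the middle $a$ is shared). Writing $\sigma(x_1)\sigma(x_2)\equiv a\,p_1p_2\,\bar a$, I replace the subpattern $\palin\palin$ by $\langle\palin\palin\rangle$. The new bracket variable $y$ gets value $\bar a$, and the twist is $\alpha_2=\iota$, so $y^\iota=a$. The palindrome variables get $p_1,p_2$. Since $\sigma(x_1)$ has odd length iff $p_1$ does and $a$ is fixed by $\rho$, these are palindromes. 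The length drops by $2$.

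\textbf{General case.} I apply \cref{lem:bracket-wrap} to the subpattern $v$ whose rightmost symbol is $s$ and leftmost symbol is $t$, i.e.\ the smallest subpattern containing the adjacent pair. The general template is to wrap $v$ in a new bracket pair $\langle v\rangle$ whose variable $y$ takes the cancelled letter. Its twisted partner $y^{\alpha_k}$, where $k=\mathrm{rk}(v)$, then absorbs the matching letter at the other end of $v$. To make this work, the letter must be pushed through the interior of $v$ by conjugating each inner variable appropriately. This is exactly the telescoping identity from the proof of \cref{lem:pattern-instances}: each palindrome gets $a^{\alpha}\cdot\,\cdot\,a^{\alpha\rho}$ and each inner bracket variable gets multiplied by $a$ suitably. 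So I instead subtract letters from the inner variables, provided each inner value begins or ends with the right letter. That condition is forced by the cancellation pattern together with $G$-equivariance.

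\textbf{Main obstacle.} I expect the main difficulty to be the bracket cases, where $s$ or $t$ is a bracket variable: for instance $\rangle\langle$ between two sibling bracketed subpatterns, or $\rangle\palin$. Stripping a letter from the value of a bracket variable $y$ automatically alters $y^{\alpha_k}$ at the matching opening bracket. I need to check that the reshaped pair keeps the parity-dependent twist $\alpha_k$ consistent, and that merging into a larger bracket $\langle v_1v_2\rangle$ has the correct rank parity. I will first handle $\rangle\langle$ by regrouping as $\langle\langle v_1\rangle\langle v_2\rangle\rangle$-style nesting. If the new outer pair is not a legal partial-pattern insertion, \cref{lem:bracket-wrap} lets the existing variable absorb it without increasing the length, which gives the strict decrease.
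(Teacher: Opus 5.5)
Your overall strategy matches the paper's: strip the cancelling pair, restore the two lost twin letters with a new bracket, and fall back on \cref{lem:bracket-wrap} when the new brackets are not legal. Your $\palin\palin$ case with $\abs{\sigma(x_i)}\geq 2$ is also correct. The general case, however, fails.

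\textbf{Where the new bracket goes.} Wrapping ``the smallest subpattern containing the adjacent pair'' is wrong in the nested cases ($\langle\palin$, $\palin\rangle$, $\langle\langle$, $\rangle\rangle$). There that subpattern is the whole enclosing factor $\langle\cdots\rangle$, so your new brackets land outside it. The two letters that must be restored lie inside it: the other end of the stripped palindrome value, or the other occurrence of the stripped bracket variable. Your remedy, subtracting letters from all inner variables, rests on a false claim, since nothing forces inner values to begin or end with particular letters. For instance, take $\langle\palin\palin\palin\rangle$ with $\sigma(y)=ab$, $\sigma(x_1)=\bar a c\bar a$, $\sigma(x_2)=d$, $\sigma(x_3)=e$. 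The cancellation $a\bar a$ at $y^\rho x_1$ says nothing about $d$ and $e$.

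The missing observation is that no pushing is ever needed, because the two twin letters always flank a single subpattern. In the sibling cases this is the pair of factors of $z_1$ and $z_2$ itself ($\palin\langle v_0\rangle\to\langle\palin\langle v_0\rangle\rangle$, $\langle v_1\rangle\langle v_2\rangle\to\langle\langle v_1\rangle\langle v_2\rangle\rangle$). In the nested cases it is the remaining content of the outer bracket ($\langle\palin\, v_0\rangle\to\langle\palin\langle v_0\rangle\rangle$, $\langle\langle v_1\rangle v_2\rangle\to\langle\langle v_1\rangle\langle v_2\rangle\rangle$). In the example this gives $\langle\palin\langle\palin\palin\rangle\rangle$ with the new variable mapped to $a$ and instance $bc\bar a\,de\,ab$. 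The twist consistency, which you flag as the main obstacle but leave open, then follows from $\alpha_{i+j}=\alpha_i\alpha_j\iota$ (as $G$ is abelian and $\iota^2=1$), together with $\rho$ acting trivially on $A$.

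\textbf{Single-letter palindrome values.} Suppose $\sigma(x_1)=a$ and $\sigma(x_2)=\bar a p_2\bar a$. Then $\sigma(x_1x_2)\equiv p_2\bar a$, not $ap_2\bar a$. Taking $p_1=\bar a$ instead gives no shortening, and for $\sigma(x_1)=a$, $\sigma(x_2)=\bar a$ your recipe again yields length $2$. Here no bracket should be added; one rearranges instead. For example, set $x_1\mapsto p_2$ and $x_2\mapsto\bar a$, or move the one-letter palindrome across the neighbouring bracket ($\palin\langle v_0\rangle\to\langle v_0\rangle\palin$, with value $\bar a^{\beta_2}$).

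\textbf{Empty values.} These cannot simply be ``dropped'': deleting a palindrome variable changes the rank. One must delete brackets whose variable is empty and move empty palindrome variables past neighbouring factors $\palin$ or $\langle v\rangle$, which preserves all ranks. This still cannot make $z_1=y^{\alpha_k}$ and $z_2=y$ adjacent when they enclose only empty palindromes, $\langle\palin^k\rangle$. That is your ``$\langle\rangle$'' case, which you list but never treat. There one deletes the brackets and uses parity. For even $k$, $\sigma(y^\iota y)\equiv 1$. For odd $k$, $\sigma(y^\rho y)$ is a palindrome whose free reduction becomes the value of one of the empty palindrome variables.
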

\begin{proof}
  Let $\sigma$ be a nonreduced instantiation of $u$ with $\sigma_\ast(w_u) = w$.
  We first normalize the pattern~$u$ without changing the word $\sigma(w_u)$.
  If the bracket variable of some pair $\langle v \rangle$ in $u$ is singular, then we simply replace $\langle v \rangle$ by the partial pattern $v$.
  Repeating this, we may assume that all bracket variables are nonsingular.
  Moreover, occurrences of $\palin$ with a singular palindrome variable may be moved past adjacent factors of the form $\palin$ or $\langle v \rangle$, changing neither the word $\sigma(w_u)$ nor the rank of any pair of brackets in the process.
  We use this freedom of movement in the following argument.

  Since $\sigma(w_u)$ is not freely reduced, whereas the values of all variables are, $w_u$ contains occurrences $z_1$ and $z_2$ of nonsingular variables, in this order, such that $\sigma(z_1) \sigma(z_2)$ is not freely reduced and all symbols between them in $w_u$ are singular palindrome variables.
  If $z_2$ is a palindrome variable or the first occurrence $y^{\alpha_k}$ of a bracket variable $y$, then moving these singular palindrome variables past the factor $\palin$ or $\langle v \rangle$ of $u$ starting with $z_2$ makes $z_1$ and $z_2$ adjacent; the case in which $z_1$ is a palindrome variable or the second occurrence $y$ of a bracket variable is handled symmetrically.

  Hence, we may assume that $z_1$ and $z_2$ are adjacent, unless $z_1 = y^{\alpha_k}$ and $z_2 = y$ belong to a pair of brackets $\langle \palin^k \rangle$ in $u$.
  In the latter case, we obtain $u'$ by replacing $\langle \palin^k \rangle$ with $\palin^k$:
  \begin{itemize}
    \item If $k$ is even, then $\alpha_k = \iota$, so that $\sigma(y^{\alpha_k} y) \equiv 1$ and we can let $\sigma'$ be the restriction of $\sigma$.
    \item If $k$ is odd, then $\alpha_k = \rho$, so that $\sigma(y^{\alpha_k} y)$ is a palindrome and we can let $\sigma'$ map one of the (singular) palindrome variables of $\palin^k$ to the free reduction of this palindrome instead.
  \end{itemize}
  In both cases, $\sigma'$ is an instantiation of $u'$ with $\sigma'_\ast(w_{u'}) = \sigma_\ast(w_u)$ and $\abs{\sigma'(w_{u'})} < \abs{\sigma(w_u)}$.

  \medskip

  From now on, we assume that $z_1$ and $z_2$ are adjacent.
  We cancel the last letter of $\sigma(z_1)$ against the first letter of $\sigma(z_2)$ by removing these letters from the values of the corresponding variables.
  As a palindrome variable thereby also loses a letter at its other end, and a bracket variable at its other occurrence, we introduce a new pair of brackets whose bracket variable $y$ restores these two letters.
  If, however, the value of a palindrome variable involved is a single letter, we merely rearrange $u$ instead.

  \begin{table}[ht]
    \caption{Rewriting an adjacent pair $z_1 z_2$ of occurrences in the proof of \cref{lem:pattern-reduction}.}\label{tbl:bracket-cases}
    \begin{center}
    \renewcommand{\arraystretch}{1.2}
    \begin{tabular}{ccccccc}
      \toprule
       $u$ & $\sigma(z_1)$ & $\sigma(z_2)$ & $u'$ & $\sigma'(z_1)$ & $\sigma'(z_2)$ & $\sigma'(y)$ \\
      \midrule
      \multirow{4}{*}{$\palin\,\palin$}
        & $a x_1' a$ & $\bar a x_2' \bar a$ & $\langle \palin\,\palin \rangle$ & $x_1'$ & $x_2'$ & $\bar a$ \\
        \cmidrule(lr){2-7}
        & $a$ & $\bar a x_2' \bar a$ & \multirow{3}{*}{$\palin\,\palin$} & $x_2'$ & $\bar a$ & -- \\
      & $a x_1' a$ & $\bar a$ &  & $a$ & $x_1'$ & -- \\
      & $a$ & $\bar a$ &  & $1$ & $1$ & -- \\
      \midrule
      \multirow{2}{*}{$\palin \langle v_0 \rangle$}
        & $a x_1' a$ & $\bar a y_2'^{\beta_2}$ & $\langle \palin \langle v_0 \rangle \rangle$ & $x_1'$ & $y_2'^{\beta_2}$ & $\bar a^{\beta_2}$ \\
      & $a$ & $\bar a y_2'^{\beta_2}$ & $\langle v_0 \rangle\, \palin$ & $\bar a^{\beta_2}$ & $y_2'^{\beta_2}$ & -- \\
      \cmidrule(lr){1-7}
      \multirow{2}{*}{$\langle v_0 \rangle\, \palin$}
        & $y_1' a$ & $\bar a x_2' \bar a$ & $\langle \langle v_0 \rangle\, \palin \rangle$ & $y_1'$ & $x_2'$ & $\bar a$ \\
      & $y_1' a$ & $\bar a$ & $\palin \langle v_0 \rangle$ & $y_1'$ & $a^{\beta_1}$ & -- \\
      \midrule
      \multirow{2}{*}{$\langle \palin\, v_0 \rangle$}
        & $y_1'^{\beta_1} a$ & $\bar a x_2' \bar a$ & $\langle \palin \langle v_0 \rangle \rangle$ & $y_1'^{\beta_1}$ & $x_2'$ & $a^{\beta_1}$ \\
      & $y_1'^{\beta_1} a$ & $\bar a$ & $\langle v_0\, \palin \rangle$ & $y_1'^{\beta_1}$ & $a^{\beta_1}$ & -- \\
      \cmidrule(lr){1-7}
      \multirow{2}{*}{$\langle v_0\, \palin \rangle$}
        & $a x_1' a$ & $\bar a y_2'$ & $\langle \langle v_0 \rangle\, \palin \rangle$ & $x_1'$ & $y_2'$ & $a$ \\
      & $a$ & $\bar a y_2'$ & $\langle \palin\, v_0 \rangle$ & $\bar a^{\beta_2}$ & $y_2'$ & -- \\
      \midrule
      $\langle \langle v_1 \rangle v_2 \rangle$ & $y_1'^{\beta_1} a$ & $\bar a y_2'^{\beta_2}$ & $\langle \langle v_1 \rangle \langle v_2 \rangle \rangle$ & $y_1'^{\beta_1}$ & $y_2'^{\beta_2}$ & $a^{\beta_1}$ \\
      \cmidrule(lr){1-7}
      $\langle v_1 \langle v_2 \rangle \rangle$ & $y_1' a$ & $\bar a y_2'$ & $\langle \langle v_1 \rangle \langle v_2 \rangle \rangle$ & $y_1'$ & $y_2'$ & $a^{\beta_1}$ \\
      \midrule
      $\langle v_1 \rangle \langle v_2 \rangle$ & $y_1' a$ & $\bar a y_2'^{\beta_2}$ & $\langle \langle v_1 \rangle \langle v_2 \rangle \rangle$ & $y_1'$ & $y_2'^{\beta_2}$ & $\bar a^{\beta_2}$ \\
      \bottomrule
    \end{tabular}
    \end{center}
  \end{table}

  All cases are listed in \cref{tbl:bracket-cases}, where the variable of $z_i$ is denoted by $x_i$ if it is a palindrome variable, and by $y_i$ if it is a bracket variable, in which case $k_i$ denotes the rank of its pair of brackets and $\beta_i \coloneqq \alpha_{k_i}$; moreover, $a \in A$ and $x_i', y_i' \in A^\ast$ (the column $u$ shows only the factor of $u$ containing $z_1 z_2$, and $v_0$, $v_1$, $v_2$ denote the remaining contents of the respective brackets).
  In each case, the table specifies $u'$ together with an instantiation $\sigma'$ of $u'$, which agrees with $\sigma$ except on the variables of $z_1$ and $z_2$ and, if present, on the new bracket variable $y$.
  It is routine to verify that
  \[
    \sigma'_\ast(w_{u'}) = \sigma_\ast(w_u)
    \quad\text{and}\quad
    \abs{\sigma'(w_{u'})} < \abs{\sigma(w_u)}
  \]
  hold in each case; we leave the details to the reader.
  Finally, if $u'$ is a partial pattern, we are done; in particular, this is the case if no new brackets are introduced.
  Otherwise, \cref{lem:bracket-wrap} yields an instantiation $\sigma''$ of the original partial pattern $u$ with 
  \[
    \sigma''_\ast(w_u) = \sigma_\ast(w_u)
    \quad\text{and}\quad
    \abs{\sigma''(w_u)} \leq \abs{\sigma'(w_{u'})} < \abs{\sigma(w_u)}.\qedhere
  \]
\end{proof}

\begin{proof}[Proof of \cref{thm:palindrome-length}]
  If $w$ is a reduced instance of a complete pattern of rank $n$, then $\pl(w) \leq n$ by \cref{lem:pattern-instances}.
  Conversely, suppose that $\pl(w) \leq n$.
  By \cref{lem:pattern-instances}, $w$ is an instance of a partial pattern of rank $n$; we choose such a pattern $u$ and an instantiation $\sigma$ of $u$ of minimal length with $\sigma_\ast(w_u) = w$.
  By \cref{lem:pattern-reduction}, $\sigma$ is reduced, so that $w = \sigma(w_u)$ is a reduced instance of $u$ and, by \cref{rem:pattern-completion}, of a complete pattern of rank~$n$.
\end{proof}

\subsection{A Formal Language Description}\label{sec:palindrome-language}

For each $n$, the characterization from \cref{thm:palindrome-length} can be turned into an explicit context-free grammar.
For simplicity, we first construct a grammar generating all freely reduced words~$w$ with $\pl(w) \leq n$ together with some nonreduced words satisfying the same bound.
When applied to these grammars, standard constructions then yield grammars for exactly the freely reduced words, or all words, of palindrome length at most~$n$ (\cref{pro:palindrome-length-language}).

For these grammars, we fix a countably infinite set of nonterminal symbols $\{Q_1, Q_2, \dotsc\}$.
The basic productions between these are given by
\begin{equation}\label{eqn:palindrome-grammar-1}
  Q_k \to Q_i Q_j, \quad
  Q_k \to a^{\alpha_k} Q_k \, a, \quad
  Q_1 \to aa \mathbin{\vert} a,
\end{equation}
where $k = i + j$, $a \in A$, and $\alpha_k = (\iota \rho)^k \iota$ is as in \cref{sec:patterns}.
Here, the second type produces brackets for $k \geq 2$; for $k = 1$, where $a^{\alpha_1} = a$, it produces, together with the last type, any nonempty palindrome from $Q_1$.
To account for singular values, we also include
\begin{equation}\label{eqn:palindrome-grammar-2}
  Q_k \to Q_{k-1} \mathbin{\vert} \cdots \mathbin{\vert} Q_2 \mathbin{\vert} Q_1.
\end{equation}

For each $n \geq 0$, we write $\mathcal{P}_n$ for the context-free grammar with nonterminals $Q_1, \dotsc, Q_n$ and a dedicated start nonterminal $S_n$ whose productions are given by \cref{eqn:palindrome-grammar-1,eqn:palindrome-grammar-2} together with the productions $S_n \to Q_n \mathbin{\vert} 1$ or, in case $n = 0$, with $S_0 \to 1$.

The following properties of $\mathcal{P}_n$ are now immediate from \cref{thm:palindrome-length,lem:pattern-instances}.

\begin{lemma}\label{lem:palindrome-length-grammar}
  Let $L(\mathcal{P}_n) \subseteq A^\ast$ denote the language generated by the grammar $\mathcal{P}_n$.
  \begin{enumerate}[label=(\arabic*)]
    \item If $w \in L(\mathcal{P}_n)$, then $\pl(w) \leq n$.
    \item If $w \in \red(A^\ast)$ and $\pl(w) \leq n$, then $w \in L(\mathcal{P}_n)$.
  \end{enumerate}
\end{lemma}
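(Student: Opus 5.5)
For both items I will relate derivations in $\mathcal{P}_n$ to patterns: a derivation from $Q_k$ should produce exactly an instantiation (not necessarily reduced) of a partial pattern of rank at most $k$. The two directions then reduce to \cref{lem:pattern-instances} and \cref{thm:palindrome-length}, respectively.

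For item (1), I will prove by induction on the derivation length that every word $w \in A^\ast$ derivable from $Q_k$ satisfies $\pl(w) \leq k$, where $\pl(w)$ refers to the image $\pi_A(w)$. The productions are handled one at a time.
\begin{itemize}
\item For $Q_1 \to aa \mathbin{\vert} a$, the word is a palindrome.
\item For $Q_k \to Q_i Q_j$, subadditivity of $\pl$ gives $\pl(w) \leq i + j = k$.
\item For the unit productions $Q_k \to Q_m$ with $m < k$, the bound $m \leq k$ suffices.
\item For $Q_k \to a^{\alpha_k} Q_k\, a$, the word is $a^{\alpha_k} v a$ with $\pl(v) \leq k$.
\end{itemize}
In the last case, write $v \equiv g_1 \cdots g_m$ with palindromes and $m \leq k$. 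If $m < k$, I pad with empty palindromes to get exactly $k$ factors, since the twist $\alpha_k$ depends on $k$. The computation in the proof of \cref{lem:pattern-instances} (implication \ref{lem:pattern-instances-3}$\Rightarrow$\ref{lem:pattern-instances-1}) then shows that $h^{\alpha_k} g_1 \cdots g_k h$ is a product of $k$ palindromes for $h = a$. Finally, $S_n \to 1$ gives $\pl = 0$. This direction is routine.

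For item (2), I take $w \in \red(A^\ast)$ with $\pl(w) \leq n$. The case $w = 1$ is handled by $S_n \to 1$, so assume $n \geq 1$. By \cref{thm:palindrome-length}, $w = \sigma(w_u)$ for a complete pattern $u$ of rank $n$ and a reduced instantiation $\sigma$. I will show by structural induction on complete patterns $u$ that $\sigma(w_u)$ is derivable from $Q_{\mathrm{rk}(u)}$ whenever $\sigma(w_u) \neq 1$. To absorb empty values at the top level, I will use the stronger statement that, for nonempty words, derivability from $Q_m$ with $m \leq k$ implies derivability from $Q_k$, which is exactly what the unit productions \cref{eqn:palindrome-grammar-2} provide.

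The induction cases are as follows.
\begin{itemize}
\item For $u = \palin$, the value is a nonempty palindrome $p$. Since $\rho$ fixes letters, I peel off matching outer letters using $Q_1 \to a Q_1 a$ and finish with $aa$ or $a$.
\item For $u = \langle u_1 u_2 \rangle$ of rank $k$ with bracket value $y' = b_1 \cdots b_r$, the word is $y'^{\alpha_k}\, \sigma(w_{u_1})\sigma(w_{u_2})\, y'$. Here $y'^{\alpha_k} = b_r^{\alpha_k} \cdots b_1^{\alpha_k}$ because $\alpha_k$ is negative. I apply $Q_k \to b^{\alpha_k} Q_k b$ for $b = b_r, \dotsc, b_1$, from the outside in, and then $Q_k \to Q_{k_1} Q_{k_2}$.
\end{itemize}
If one of the inner words is empty, I instead use $Q_k \to Q_{k'}$ for the rank of the nonempty part, or continue if it is nonempty.

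The main obstacle is in the bracket case. When both inner values are empty and $y' \neq 1$, the word $y'^{\alpha_k} y'$ must still be generated by nonterminal $Q_k$ ending in something nonempty. Because $\sigma$ is reduced, this forces $y'^{\alpha_k} y'$ to be freely reduced. For even $k$ that is impossible unless $y' = 1$. For odd $k$ the word is a nonempty palindrome, which is derivable from $Q_1$ and hence from $Q_k$. Working through this case, together with the bookkeeping for empty subpatterns, is where care is needed.
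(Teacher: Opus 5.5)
Your proposal is correct and follows the paper's route: the paper treats this lemma as immediate from \cref{lem:pattern-instances} for item~(1) and from \cref{thm:palindrome-length} for item~(2), and your argument spells out that reduction. Two details the paper leaves implicit are handled soundly. First, you pad with empty palindromes so that the twist $\alpha_k$ still fits after a unit production $Q_k \to Q_m$. Second, in the degenerate bracket case you correctly note that $y'^{\iota} y'$ cannot be freely reduced for nonempty $y'$, while $y'^{\rho} y'$ is a palindrome generated by $Q_1$.
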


Using the well-known facts that context-free languages are closed under intersection~with regular ones (see, e.g., \cite{HopcroftU79})~--~here, $\red(A^\ast)$~--~and under taking ancestors with respect to finite monadic rewrite systems~\cite[Theorem~2.2]{BookJW82}~--~here, free reduction~--~we obtain the following.

\begin{proposition}\label{pro:palindrome-length-language}
  For every $n \geq 0$, the following languages are context-free.
  \begin{enumerate}[label=(\arabic*)]
      \item\label{pro:palindrome-length-language-1} The language $RP_n \subseteq A^\ast$ consisting of all freely reduced words $w$ with $\pl(w) \leq n$.
      \item\label{pro:palindrome-length-language-2} The language $AP_n \subseteq A^\ast$ consisting of all words $w$ with $\pl(w) \leq n$.
  \end{enumerate}
\end{proposition}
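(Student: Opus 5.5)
The plan is to obtain both languages from the single grammar $\mathcal{P}_n$ of \cref{lem:palindrome-length-grammar}, using only the two closure properties quoted just before the statement.

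For \ref{pro:palindrome-length-language-1}, I would show that $RP_n = L(\mathcal{P}_n) \cap \red(A^\ast)$. If $w$ lies in the right-hand side, then $\pl(w) \leq n$ by \cref{lem:palindrome-length-grammar}~(1), and $w$ is freely reduced, so $w \in RP_n$. Conversely, \cref{lem:palindrome-length-grammar}~(2) gives $RP_n \subseteq L(\mathcal{P}_n)$, and $RP_n \subseteq \red(A^\ast)$ holds by definition. The language $\red(A^\ast)$ is regular, since it is recognized by $\mu_A$; it is also the complement of $A^\ast\{a\bar a : a \in A\}A^\ast$. As $L(\mathcal{P}_n)$ is context-free, closure under intersection with regular languages yields~\ref{pro:palindrome-length-language-1}.

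For \ref{pro:palindrome-length-language-2}, I would use that $\pl$ depends only on the element of $F(A)$. Hence $w \in AP_n$ if and only if $\red(w) \in RP_n$. Consider the finite monadic rewrite system $R = \{a\bar a \to 1 : a \in A\}$, which is length-reducing with right-hand sides of length at most one. Because this system is confluent, $\red(w)$ is the unique irreducible descendant of $w$. Moreover, every word $w$ reduces to $\red(w)$ under $R$. It follows that the set of ancestors of $RP_n$ with respect to $R$ is exactly $\{w : \red(w) \in RP_n\} = AP_n$.

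For the inclusion from ancestors to $AP_n$, suppose $w \to^\ast v$ with $v \in RP_n$. Then $w \equiv v$, so $\red(w) = v$ and $\pl(w) = \pl(v) \leq n$. For the other inclusion, take $w \in AP_n$; then $w \to^\ast \red(w) \in RP_n$. By \cite[Theorem~2.2]{BookJW82}, the ancestor set of a context-free language under a finite monadic system is context-free, which proves~\ref{pro:palindrome-length-language-2}.

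The only place needing any care is verifying the hypotheses of the Book–Jantzen–Wrathall theorem, namely that $R$ is monadic (left sides $a\bar a$ of length two, right side empty). The case where $\bar a = a$ for some letters is harmless, since the rule is then simply $aa \to 1$. The case $n = 0$ is trivial as well: $RP_0 = \{1\}$, and $AP_0$ is the set of words that freely reduce to $1$.
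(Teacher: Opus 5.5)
Your proposal is correct and follows essentially the same route as the paper: part (1) writes $RP_n = \red(A^\ast) \cap L(\mathcal{P}_n)$ via \cref{lem:palindrome-length-grammar} and closure under intersection with regular languages, and part (2) realizes $AP_n$ as the set of ancestors of $RP_n$ under the finite monadic system of cancellation rules (your $a\bar a \to 1$ for all $a \in A$ is the same rule set as the paper's $\bar a a \to 1$), invoking \cite[Theorem~2.2]{BookJW82}. You additionally spell out the two inclusions for the ancestor set, which the paper leaves as ``clearly''.
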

\begin{proof}
\textit{Ad \ref{pro:palindrome-length-language-1}.}
By \cref{lem:palindrome-length-grammar}, the language $RP_n$ equals $\red(A^\ast) \cap L(\mathcal{P}_n)$;
hence, as an intersection with a regular language, $RP_n$ is also context-free.

\smallskip

\textit{Ad \ref{pro:palindrome-length-language-2}.}
Let $\mathfrak{R}$ be the finite monadic rewrite system on $A^\ast$ given by the rules $\bar a a \to 1$ for all $a \in A$.
Clearly, the language $AP_n$ coincides with the language
\[
    \{
        w \in A^\ast :
        w \to^\ast_\mathfrak{R} w'
        \text{ for some }
        w' \in RP_n
    \}
\]
of all ancestors of words in $RP_n$ with respect to $\mathfrak{R}$.
Since context-free languages are closed under taking ancestors with respect to a finite monadic rewrite system~\cite[Theorem~2.2]{BookJW82}, we deduce that this language is indeed context-free.
\end{proof}

In view of the uniformity of the description of the context-free grammars $\mathcal{P}_n$, a straightforward adaptation of the algorithm of Cocke, Younger, and Kasami~\cite{CockeS70, Kasami65, Younger67} computes the palindrome length of a given word in cubic time.

\begin{corollary}\label{cor:palindrome-length-cubic}
  On input of a word $w \in A^\ast$, one can compute $\pl(w)$ in time $\mathcal{O}(\abs{w}^3)$.
\end{corollary}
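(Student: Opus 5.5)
We will compute $\pl(w)$ with a CYK-style dynamic program that exploits the grammars $\mathcal{P}_n$. Since $\pl(w) = \pl(\red(w))$ and $\red(w)$ can be computed in linear time with a stack, we first replace $w$ by $\red(w)$. By \cref{lem:palindrome-length-grammar}, for freely reduced $w$ we have $\pl(w) \leq n$ if and only if $w \in L(\mathcal{P}_n)$. Hence $\pl(w)$ is the least $n$ such that $w = 1$ (giving $n = 0$) or $Q_n$ derives $w$.

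Because of the productions \cref{eqn:palindrome-grammar-2}, the set of $k$ such that $Q_k$ derives a given word is upward closed. We therefore do not need a table entry for every pair $(k, \text{factor})$. Instead, for each factor $w[i..j]$ we store the single number
\[
  m(i,j) = \min\{k \geq 1 : Q_k \Rightarrow^\ast w[i..j]\},
\]
which is bounded by $\abs{w}$, since each letter is a palindrome.

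The recurrence is read off from \cref{eqn:palindrome-grammar-1}:
\[
  m(i,j) = \min\bigl( m_{\mathrm{base}},\ \min_{i \leq l < j} \bigl(m(i,l) + m(l+1,j)\bigr),\ m_{\mathrm{wrap}} \bigr).
\]
Here $m_{\mathrm{base}} = 1$ if $w[i..j]$ is a single letter, or equals $aa$.

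The wrap term needs care. The production $Q_k \to a^{\alpha_k} Q_k a$ may only be applied with the twist $\alpha_k$, and $\alpha_k$ depends on the parity of $k$. The least $k$ such that $Q_k$ derives the inner word $w[i+1..j-1]$ may have the wrong parity, while a larger $k$ of the right parity also derives it by upward closure. So we store two values, $m_{\mathrm{even}}(i,j)$ and $m_{\mathrm{odd}}(i,j)$: the least even, respectively odd, $k$ with $Q_k \Rightarrow^\ast w[i..j]$. By upward closure each is at most $m(i,j)+1$, so the two values are determined by $m$ together with parity information. The wrap case then contributes the least $k$ of the required parity for which $w[i] = w[j]^{\alpha_k}$ and $k \geq m(i+1,j-1)$. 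The condition $w[i] = w[j]^{\alpha_k}$ depends only on the parity of $k$, via $\iota$ or $\rho$, and the smallest such $k$ is one of $m(i+1,j-1)$ and $m(i+1,j-1)+1$. For $i+1 > j-1$ (inner word empty), the wrap case is covered by the base case.

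Each table entry takes $\mathcal{O}(j-i)$ time because of the split minimum, so the total running time is $\mathcal{O}(\abs{w}^3)$.

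The main obstacle is the correctness of this compressed table: we must check that the minimal derivable rank, together with upward closure under \cref{eqn:palindrome-grammar-2}, really captures every derivation in $\mathcal{P}_n$.

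We would argue by induction on $j-i$ that $Q_k \Rightarrow^\ast w[i..j]$ holds if and only if $k \geq m(i,j)$. One direction follows because every rule used in the recurrence is a production of the grammar, possibly preceded by a chain rule $Q_k \to Q_{k'}$. For the other direction, take a derivation of $w[i..j]$ from $Q_k$. Its first non-chain production is a base rule, a split $Q_{k'} \to Q_{i'}Q_{j'}$, or a wrap, each with some $k' \leq k$. The induction hypothesis bounds the ranks of the subderivations, which gives $m(i,j) \leq k' \leq k$.

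The parity subtlety in the wrap case is exactly what the extra $+1$ handles. The production requires the same $Q_k$ on both sides, and the inner derivation may then be relaxed to any larger rank.
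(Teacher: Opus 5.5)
Your proposal is correct and follows essentially the paper's proof. You reduce $w$ freely, use the chain productions $Q_k \to Q_{k-1}$ to store only the least rank per factor, and fill the table CYK-style via the $\mathcal{O}(\abs{w})$ split minimum and a wrap case that needs only the least admissible rank of each parity. Your induction on $j-i$ just spells out the correctness of this compressed table, which the paper leaves implicit; in the split case, note explicitly that each $Q_k$ derives only nonempty words, so the split point lies strictly inside the factor.
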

\begin{proof}
  As free reduction can be computed in linear time, we may assume that $w$ is freely reduced.
  Since $\mathcal{P}_n$ contains the productions $Q_k \to Q_{k-1}$, the languages generated from $Q_1, Q_2, \dotsc$ form an ascending chain.
  Hence, it suffices to compute, for every factor $u$ of $w$, the least $k$ such that $u$ is generated from $Q_k$.
  By \cref{lem:palindrome-length-grammar}, this number equals $\pl(w)$ for $u = w$.

  Apart from the base cases, i.e., for $Q_1 \to aa \mathbin{\vert} a$, this number is determined by the corresponding numbers for the proper factors of $u$.
  Indeed, for $Q_k \to Q_i Q_j$, we simply minimize over the $\mathcal{O}(\abs{u})$ factorizations of $u$, and for $Q_k \to a^{\alpha_k} Q_k a$, it suffices to consider the least admissible $k$ of each parity.
  Computing these numbers for all factors, of which there are $\mathcal{O}(\abs{w}^2)$, in order of increasing length thus takes time $\mathcal{O}(\abs{w}^3)$.
\end{proof}

Beyond cubic time, the uniform grammars~$\mathcal{P}_n$ also yield a bound in terms of parallel complexity, namely that $\pl$ is computable\footnote{Meaning that, on input of $w \in A^\ast$ and $n \in \mathbb{N}$, one can decide whether $\pl(w) = n$.} in \LOGCFL.
Recall that \LOGCFL, introduced by Sudborough~\cite{Sudborough78}, is the class of all decision problems that are logspace-reducible to the membership problem of a context-free language.
It is known that $\LOGCFL \subseteq \NC^2$~\cite{Ruzzo80} and, due to the former inclusion, that $\LOGCFL \subsetneq \PSPACE$.
Although the target language in the above definition is fixed, $\LOGCFL$ crucially also contains the uniform membership problem for $1$-free\footnote{Without this restriction, the uniform membership problem is $\PTIME$-complete~\cite{JonesL76}.} context-free grammars~\cite{Goldschlager81, Ruzzo80}.

\begin{theorem}\label{thm:palindrome-length-compute}
    The function $\pl$ is computable in \LOGCFL.
\end{theorem}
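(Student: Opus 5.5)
The plan is to reduce the decision problem ``$\pl(w) = n$'' to the uniform membership problem for $1$-free context-free grammars, which lies in \LOGCFL\ by the results of Goldschlager and Ruzzo cited above. Since \LOGCFL\ is closed under logspace reductions and under complement (Borodin et al.), and since $\pl(w) = n$ holds if and only if $\pl(w) \leq n$ and not $\pl(w) \leq n-1$, it suffices to decide $\pl(w) \leq n$ in \LOGCFL. Closure of \LOGCFL\ under complement is a known result that I would cite; alternatively, one can avoid it by observing that the function value can be recovered by a logspace oracle machine querying the monotone predicates $\pl(w) \leq k$ for $k = 0, \dotsc, \abs{w}$, using $\mathrm{L}^{\LOGCFL} = \LOGCFL$.

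First, free reduction is computable in logspace: the free group over $A$ has logspace word problem (Lipton--Zalcstein). Computing $\red(w)$ itself as a logspace function is also standard, for instance by deciding for each position whether it survives via matching of cancelling pairs, which is a Dyck-type condition checkable in logspace. So we may assume that $w$ is freely reduced. We may also assume $n \leq \abs{w}$, since every word is a product of $\abs{w}$ single-letter palindromes. This bound matters because it keeps the grammar polynomial in the input size, as $n$ may be given in binary.

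Next, given $n$, the logspace transducer outputs the grammar $\mathcal{P}_n$. The only problem is that $\mathcal{P}_n$ contains $\varepsilon$-productions ($S_n \to 1$) and unit productions. For $w \neq 1$ it suffices to use $Q_n$ as start symbol and drop $S_n$; the case $w = 1$ is trivial. The productions \eqref{eqn:palindrome-grammar-1} and \eqref{eqn:palindrome-grammar-2} are $1$-free, and the unit productions $Q_k \to Q_j$ are harmless for the Ruzzo-type algorithm, or can be eliminated by replacing them with the productions $Q_k \to Q_i Q_j$ for all $i + j \leq k$, $Q_k \to a^{\alpha_k} Q_j a$ for $j \leq k$ with $\alpha_j = \alpha_k$ (i.e., $j \equiv k \bmod 2$), and $Q_k \to aa \mid a$. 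The resulting grammar has $\mathcal{O}(n^2\abs{A})$ productions, each of which is enumerable in logspace. By \cref{lem:palindrome-length-grammar}, the freely reduced word $w$ satisfies $\pl(w) \leq n$ if and only if $w$ lies in the language of this grammar.

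The main obstacle I expect is checking that the unit-free rewriting generates the same language: the productions $Q_k \to a^{\alpha_k} Q_k a$ preserve parity, and chains of unit productions must be absorbed correctly. The point is that the language generated from $Q_j$ is contained in that generated from $Q_k$ for $j \leq k$, which I would establish by induction on derivation length. I would also need to confirm that the cited membership results indeed cover grammars given as part of the input with polynomial size, so that the whole reduction composes in logspace.
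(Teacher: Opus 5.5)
Your main line is the paper's proof. You bound $n \le \abs{w}$, compute $\red(w)$ and $\mathcal{P}_n$ in logspace, apply \cref{lem:palindrome-length-grammar} together with \LOGCFL-membership of uniform membership for $1$-free grammars, and obtain $\pl(w) = n$ by closure under complement. Your handling of $S_n \to 1$ (start from $Q_n$, treat $\red(w) = 1$ separately) is actually more careful than the paper. The paper calls the target problem $1$-free even though $\mathcal{P}_n$ contains $S_n \to 1$. You are also right that the chain rules $Q_k \to Q_j$ need no treatment: in a $1$-free grammar, a minimal derivation of a word of length $m$ has $\mathcal{O}(m \cdot n)$ nodes.

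However, the unit-free grammar you offer as an alternative is wrong, and the containment you planned to prove for it by induction fails. If $Q_k$ only has $Q_k \to a^{\alpha_k} Q_j a$ for $j \equiv k \pmod 2$, then it no longer contains the language of $Q_j$ for smaller $j$ of the other parity. Concretely, take $\bar a \neq a$ and $b \notin \{a, \bar a\}$. The new $Q_1$ generates the nonempty palindromes, so $aba$ is derivable from it. But $aba$ is not derivable from the new $Q_2$, for three reasons:
\begin{itemize}
\item it does not start with $a^{\alpha_2} = \bar a$;
\item it is not a concatenation of two nonempty palindromes;
\item it is not of the form $aa$ or $a$.
\end{itemize}
Yet $aba$ is freely reduced, $\pl(aba) = 1 \le 2$, and $\mathcal{P}_2$ generates it via $Q_2 \to Q_1 \to a Q_1 a$.

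If you do want to remove chain rules, give $Q_k$ the productions $Q_k \to a^{\alpha_j} Q_j a$ for all $j \le k$ (twisting by the inner index), $Q_k \to Q_i Q_j$ for $i + j \le k$, and $Q_k \to aa \mid a$. Then the productions of $Q_{k'}$ are among those of $Q_k$ whenever $k' \le k$, and a least-fixed-point comparison shows the two grammars generate the same languages. Simpler still, drop this detour entirely, since your first option already suffices.
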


\begin{proof}
    We first consider the problem of deciding, on input of a word $w \in A^\ast$ and a number~$n$, whether $\pl(w) \leq n$.
    Since clearly $\pl(w) \leq \abs{w}$, we may assume that $n \leq \abs{w}$.
    In particular, the number $n$ is effectively given in unary encoding, so a logspace-transducer can compute the context-free grammar~$\mathcal{P}_n$.
    Free reduction can also be computed by a logspace-transducer~\cite{ElderEO13}.
    Hence, we may reduce to the question of whether $\red(w) \in L(\mathcal{P}_n)$.
    By \cref{lem:palindrome-length-grammar}, we indeed have $\red(w) \in L(\mathcal{P}_n)$ if and only if $\pl(w) \leq n$.
    As the target of this reduction~--~the uniform membership problem for $1$-free context-free grammars~--~is in \LOGCFL, so is our problem.

    Finally, note that $\pl(w) = n$ if and only if $\pl(w) \leq n$ and \emph{not} $\pl(w) \leq n-1$.
    By the above, the first condition can be decided in \LOGCFL; since \LOGCFL\ is closed under complement~\cite{BorodinCDRT89}, so can the second.
    Therefore, we can decide in \LOGCFL\ whether $\pl(w) = n$.
\end{proof}

\section{Conclusion}\label{sec:conclusion}

In this article, we have investigated twisted equations in free groups, in which a finite group of symmetries acts by automorphisms and anti-automorphisms, and the variables may be subject to rational constraints.
For quadratic twisted equations, we have extended the algorithm of Comerford and Edmunds~\cite{ComerfordE81}: basic substitutions carry out free reduction symbolically, while a simple weight function confines the search to finite automata.
Besides deciding solvability in \PSPACE\ and yielding \EDTOL\ descriptions of solution sets, this algorithm computes, for every quadratic twisted expression, finitely many parametric forms describing its values without any cancellation.
This applies, e.g., to products of squares or commutators, for which Wicks-type forms are classical~\cite{Wicks62, Culler81}, and to products of twisted conjugates or palindromes with respect to arbitrary antimorphic involutions.

The most pressing open question concerns the complexity of solving quadratic twisted equations: deciding solvability is \NP-hard~\cite{KharlampovichLMT10}, but we do not know whether it is in \NP.
For quadratic equations without twists and constraints, membership in \NP\ is known in free groups~\cite{KharlampovichLMT10} and, more generally, in torsion-free hyperbolic groups~\cite{KharlampovichMTV17}.
It is open whether these bounds extend to twisted equations or to rational constraints (see \cref{rem:per-variable-constraints}).
It would also be interesting to extend symbolic reduction to quadratic (twisted) equations in other structures, in particular, hyperbolic groups and right-angled Artin groups.

As an application, we have studied palindrome length in free groups.
Guided by the parametric forms computed by our algorithm, we found, and then proved directly, a characterization of the freely reduced words of palindrome length at most~$n$, thereby confirming the Catalan structure conjectured by the second author~\cite{Frid25}.
We have also shown that, for every~$n$, these words form a context-free language, and that $\pl$ is computable in cubic time and in \LOGCFL.

The precise complexity of computing~$\pl$, however, remains open.
For words, palindrome length can be computed in linear time~\cite{BorozdinKRS17}, whereas our algorithm for free groups takes cubic time.
Can this gap be closed, or is there a matching conditional lower bound?
Similarly, is computing~$\pl$ hard for \LOGCFL, or does it lie in a smaller class such as \NL, as for words?

Finally, we hope that our article also serves as a gentle introduction to the theory of equations in free groups, in particular for readers from combinatorics on words.

\section*{Acknowledgements}

The authors thank Aix Marseille Université for financial support and the Centre international de rencontres mathématiques (CIRM) for its hospitality.

\bibliographystyle{abbrvurl}
\bibliography{main}

\end{document}